\theoremstyle{plain}
\newtheorem{theorem}{THEOREM}[section]
\newtheorem{proposition}[theorem]{PROPOSITION}
\newtheorem{lemma}[theorem]{LEMMA}
\theoremstyle{definition}
\newtheorem{definition}[theorem]{DEFINITION}
\theoremstyle{remark}
\newtheorem{remark}[theorem]{REMARK}
\numberwithin{equation}{section}
\newcommand{\e}{\mathrm e}
\newcommand{\mySin}{\mathrm s}
\newcommand{\myCos}{\mathrm c}
\newcommand{\F}{\underline{F}}
\newcommand{\B}{\tilde{B}}
\newcommand{\HH}{\tilde{\xi}}
\newcommand{\abs}[1]{\lvert #1 \rvert }
\newcommand{\Abs}[1]{\Bigl\lvert #1 \Bigr\rvert }
\newcommand{\frp}[1]{\{ #1\}}
\newcommand{\iinp}[1]{[[#1]]}
\newcommand{\ffrp}[1]{\langle \langle #1 \rangle \rangle}
\title
{An Elliptic Analogue Of Generalized Cotangent Dirichlet Series And Its Transformation Formulae At Some Integer Arguments}
\author{MACHIDE, Tomoya}
\begin{document}

\maketitle

\begin{abstract}
	B.C. Berndt evaluated special values of the $cotangent$ $Dirichlet$ $series$.
	T. Arakawa studied a generalization of the series, or $generalized$ $cotangent$ $Dirichlet$ $series$,
		and gave its transformation formulae.
		
	In this paper, we establish an elliptic analogue of the generalized cotangent Dirichlet series 
		and give its transformation formulae at some integer arguments.
	As a corollary, we obtain the transformation formulae of the generalized cotangent Dirichlet series at some integer arguments
		which are the part of Arakawa's transformation formulae.
	Those transformation formulae give the special values of the cotangent Dirichlet series evaluated by B.C. Berndt.
\end{abstract}

\section{Introduction}\label{Sect_Introduction}
The $cotangent\  Dirichlet\  series$
	\begin{equation}
	\xi ( s, \alpha ) := \sum_{n=1}^{\infty} \frac{\cot \pi n \alpha}{n^s}
	\end{equation} 
	for an irrational real algebraic number $\alpha$ over $\mathbb{Q}$
	has been studied by B.C. Berndt \cite{berndt1}.
He showed that $\xi ( s, \alpha )$ is absolutely convergent if $\mathrm{Re}(s)$ is larger than the degree of $\alpha$,
	and evaluated the special values of $\xi(s, \alpha)$ for positive odd integers $s$ and real quadratic numbers $\alpha$.
After him, T. Arakawa improved the bounds of convergence from the degree of $\alpha$ to $1$
	by use of the Thue-Siegel-Roth theorem.
\begin{theorem}\label{1.Berndt_Theorem}
$\mathrm{(B.C. Berndt}$ \cite[Theorem 5.1, 5.2]{berndt1} with  $\mathrm{T. Arakawa}$ \cite[Lemma 1]{arakawa1}$\mathrm{)}$\\
$\mathrm{(i)}$
If $s$ is a real number with $\mathrm{Re}(s) > 1$ and $\alpha$ is an irrational real algebraic number, then 
	$\xi (s, \alpha)$ is absolutely convergent.
\\
$\mathrm{(ii)}$
Let $\alpha = (a + b \sqrt{c})/2$  and $\epsilon = \pm 1$, 
	where 
	$a, b$ and $c$ are integers with $c > 0$ and
	$a^2 -cb^2 = 4 \epsilon$. If $l$ is an integer with $l > 1$, then
	\begin{equation}\label{1.Berndt_formula}
		(1 - \epsilon \alpha^{2l-2})\xi(2l-1, \alpha)
		=
		\frac{(-1)^{l-1}  (2\pi)^{2l-1}}{(2l)!}
			\sum_{k=0}^{l} \binom{2l}{2k}  \alpha^{2k-1} B_{2k}B_{2l-2k}.
	\end{equation}
Here $B_k$ are the k-th Bernoulli numbers defined by
	$\sum\limits_{m=0}^{\infty} (B_m/m!)x^{m} = x / (\e^x -1)$.
\end{theorem}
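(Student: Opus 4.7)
The plan is to treat the two parts separately. Part (i) rests on the Thue--Siegel--Roth theorem: for any irrational algebraic $\alpha$ and any $\delta > 0$ there is a constant $C = C(\alpha,\delta) > 0$ with $\|n\alpha\| > C n^{-1-\delta}$, where $\|\cdot\|$ denotes distance to the nearest integer. Combined with the elementary bound $|\cot(\pi n\alpha)| \leq 1/(\pi\|n\alpha\|)$, this gives a pointwise estimate $|\cot(\pi n\alpha)| \ll n^{1+\delta}$. A crude summation yields absolute convergence only for $\mathrm{Re}(s) > 2+\delta$; to sharpen this to $\mathrm{Re}(s) > 1$, I would partition the sum by the dyadic size of $\|n\alpha\|$ using the continued fraction expansion of $\alpha$: large values of $|\cot(\pi n\alpha)|$ can occur only for $n$ close to multiples of a convergent denominator $q_i$, and Roth's theorem controls the growth $q_{i+1} \ll q_i^{1+\delta}$, making the total contribution of each dyadic block summable.

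For (ii), the principal tool is contour integration. I would apply the residue theorem to
$$F(z) = \frac{\pi\cot(\pi z)\,\pi\cot(\pi\alpha z)}{z^{2l-1}}$$
on a sequence of square contours $C_N$ centered at the origin, with side lengths chosen so as to stay uniformly bounded away from both $\mathbb{Z}$ and $\alpha^{-1}\mathbb{Z}$. On such contours both $|\cot(\pi z)|$ and $|\cot(\pi\alpha z)|$ are uniformly bounded, and since $l > 1$ forces $2l-1 \geq 3$, the integral is $O(N^{2-2l}) \to 0$; hence the sum of all residues of $F$ vanishes. The residue at $z = n \in \mathbb{Z}\setminus\{0\}$ is $\pi\cot(\pi n\alpha)/n^{2l-1}$, summing over $n \neq 0$ to $2\pi\,\xi(2l-1,\alpha)$; the residue at $z = m/\alpha$ with $m \in \mathbb{Z}\setminus\{0\}$ is $\pi\alpha^{2l-2}\cot(\pi m/\alpha)/m^{2l-1}$, summing to $2\pi\alpha^{2l-2}\,\xi(2l-1,1/\alpha)$; and the residue at $z = 0$, extracted from the Laurent expansion $\pi w\cot(\pi w) = \sum_{k\geq 0}(-1)^k (2\pi w)^{2k} B_{2k}/(2k)!$, equals
$$\frac{(-1)^l (2\pi)^{2l}}{(2l)!}\sum_{k=0}^{l}\binom{2l}{2k}\alpha^{2k-1}B_{2k}B_{2l-2k}.$$

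The arithmetic input is the quadratic identity $\alpha^2 = a\alpha - \epsilon$ (immediate from $a^2 - cb^2 = 4\epsilon$), which yields $1/\alpha = (a-\alpha)/\epsilon$; since $na \in \mathbb{Z}$, this forces $\cot(\pi n/\alpha) = -\epsilon\cot(\pi n\alpha)$ and hence $\xi(2l-1,1/\alpha) = -\epsilon\,\xi(2l-1,\alpha)$. Substituting into the vanishing-residue identity and dividing by $2\pi$ gives exactly \eqref{1.Berndt_formula}. The main obstacle is the choice of contours $C_N$: because $\alpha$ is irrational, the two pole families $\mathbb{Z}$ and $\alpha^{-1}\mathbb{Z}$ admit no uniform gap, so one must extract a subsequence of side lengths (or use slightly asymmetric rectangles) to keep $|\cot(\pi\alpha z)|$ uniformly bounded on $C_N$. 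Once this technical point is settled, the residue computation is routine, and the structural content of the theorem lies in the quadratic identity $\xi(2l-1,1/\alpha) = -\epsilon\,\xi(2l-1,\alpha)$, which is precisely what produces the factor $(1 - \epsilon\alpha^{2l-2})$ on the left-hand side.
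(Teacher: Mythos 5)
Your argument is sound, but it is a genuinely different route from the one taken in the paper. The paper never proves Theorem~\ref{1.Berndt_Theorem} directly: part (i) is imported from Arakawa's Lemma~1 (whose convergence mechanism is reused in Lemma~\ref{3.Lemma1}), and part (ii) is recovered only at the end of a long chain in Section~\ref{Sect_BerndtArakawaResults} --- the elliptic transformation law of Theorem~\ref{4.Theorem1} is degenerated via $\tau\to i\infty$ through Proposition~\ref{DegenerationProperties} to yield Arakawa's formula \eqref{1.Arakawa_Equation} at negative integers, from which \eqref{1.Berndt_formula} follows by \cite[Proposition 2.2]{arakawa2}. You instead give the classical direct proof: integrating $\pi^2\cot(\pi z)\cot(\pi\alpha z)z^{1-2l}$ over large contours makes the vanishing residue sum couple $\xi(2l-1,\alpha)$ with $\alpha^{2l-2}\xi(2l-1,1/\alpha)$, and the quadratic relation $\alpha^2=a\alpha-\epsilon$ gives $\cot(\pi n/\alpha)=-\epsilon\cot(\pi n\alpha)$, collapsing the second series to $-\epsilon\,\xi(2l-1,\alpha)$; your residues at $z=n$, $z=m/\alpha$ and $z=0$ are all computed correctly and the bookkeeping reproduces \eqref{1.Berndt_formula} exactly. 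Your approach buys brevity and transparency; the paper's buys generality, since the same machinery produces the full transformation law for arbitrary $V\in\mathrm{SL}_2(\mathbb{Z})$ together with its elliptic deformation, of which \eqref{1.Berndt_formula} is the $V=S$, $\tau\to i\infty$, real-part shadow. Two places in your write-up are still sketches rather than proofs and should be completed: first, in part (i) the descent from the crude exponent $2+\delta$ to $1$ really does require carrying out the block decomposition along continued-fraction denominators (this is exactly the content of Arakawa's Lemma~1 --- your outline names the right tools but does not execute the count of $n$ with $\lVert n\alpha\rVert$ in a given dyadic range); second, the existence of admissible contours should be justified explicitly, e.g.\ by Weyl equidistribution of $(N+\tfrac12)\alpha$ modulo $1$ one finds infinitely many $N$ with $\lVert(N+\tfrac12)\alpha\rVert\ge\tfrac13$, which bounds $\cot(\pi\alpha z)$ on the vertical sides of the square of half-width $N+\tfrac12$, the horizontal sides being harmless since both cotangents are uniformly bounded away from the real axis. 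Neither point is an obstruction, and for $l>1$ the decay $O(N^{2-2l})$ of the contour integral is as you state.
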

For real numbers $s$ and $x$,
	let $\e(s)$ denote $\e^{2 \pi i s}$,
	and
	$\langle x \rangle$ (resp. $\{x \}$) the real number which satisfies
	$0 < \langle x \rangle \leq 1$ and $x - \langle x \rangle \in \mathbb{Z}$
	(resp. $0 \leq \{x \} < 1$ and $x - \{ x \} \in \mathbb{Z}$).
Furthermore let $\chi (x)$ be the characteristic function of integers, i.e., 
	\begin{equation*}
		\chi (x) := 
		 	\begin{cases}
				 1 & \mathrm{if} \quad  x \in \mathbb{Z}, \\
				 0 & \mathrm{if} \quad  x \notin \mathbb{Z},
		 	\end{cases}
	\end{equation*}
	and $\zeta(s, x)$ the Hurwitz zeta function defined by
	\begin{equation*}
		\zeta(s, x) := \sum_{n = 0}^{\infty} \frac{1}{(n + x)^s}.
	\end{equation*}
For any pair $\vec{\omega} = (\omega_1, \omega_2)$ of positive real numbers 
	and for complex numbers $z, t \in \mathbb{C}$,
	we set
	\begin{equation*}
		G_2 (z, \vec{\omega}; t) := 
			\frac{\exp(-zt)}{(1-\exp(-\omega_1t ))(1-\exp(-\omega_2 t))}.
	\end{equation*} 

T. Arakawa \cite{arakawa1} established transformation formulae for the infinite series 
	\begin{equation} \label{1.definition_H}
		H (\alpha,s,x,y) 
		:=
		\sum_{n=1}^{\infty} \frac{\e(ny)}{n^{1-s}} \frac{\e (n \langle x \rangle \alpha)}{1-\e(n \alpha)}
		+
		\e(s/2)
		\sum_{n=1}^{\infty} \frac{\e(-ny)}{n^{1-s}} \frac{\e(n \langle -x \rangle  \alpha)}{1-\e(n \alpha)}.
	\end{equation}
\begin{theorem}\label{1.Arakawa_Theorem}
$\mathrm{(T. Arakawa}$ \cite[Theorem 1]{arakawa1}$\mathrm{)}$
	Let $\alpha$ be any irrational real algebraic number, 
		and let $V = \begin{pmatrix} a & b \\ c & d \end{pmatrix}  \in \mathrm{SL}_2 (\mathbb{Z})$ 
		with $c > 0$ and $c \alpha + d > 0$.
Put $\beta = c \alpha + d$,
	and
	set $p' = pa + qc$, $q' = pb + qd$,
		and $\rho = \{ q' \} c - \{p' \} d$ for any pair $(p, q)$ of real numbers.
	If $\mathrm{Re}(s) < 0$, then
		\begin{eqnarray} 
			& & \beta^{-s} H(V\alpha, s, p, q) \nonumber \\
			& = &  H(\alpha, s, p', q')
				- \chi(p) (2 \pi)^{-s} \e(s/4) \beta^{-s} \Gamma(s) \bigl(\zeta(s, \langle q \rangle)
				+\e(s/2) \zeta(s, \langle - q \rangle) \bigr) \label{1.Arakawa_Equation} \nonumber \\
			& & \quad
				- \chi(p') (2 \pi)^{-s} \e(-s/4) \beta^{-s} \Gamma(s) \bigl(\zeta(s, \langle -q' \rangle)
				+\e(s/2) \zeta(s, \langle q' \rangle) \bigr)\nonumber \\
			& & \quad
				+(2 \pi)^{-s}  \e(-s/4) L(\alpha, s, p', q', c, d). 
		\end{eqnarray}
	Here $\Gamma(s)$ is the gamma function and 
		\begin{equation*}
			L(\alpha, s, p', q', c, d)
			=
			- \sum_{j=1}^{c}
				\int\limits_{I(\lambda, \infty)}
					t^{s-1} G_2(1-\{ \frac{jd + \rho}{c} \} +  \beta \frac{j -\{ p' \}}{c}, (1, \beta); t) dt
		\end{equation*}
		where $I(\lambda, \infty)$ is the integral path consisting of the oriented half line 
		$(+\infty, \lambda)$, a counterclockwise circle of radius $\lambda$ around the origin,
		and the oriented half line $(\lambda, +\infty)$.
\end{theorem}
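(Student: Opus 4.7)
The plan is to give both sides of (\ref{1.Arakawa_Equation}) a Hankel-type contour integral representation against $t^{s-1}$ and match them through a partial-fraction decomposition induced by $V$.

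\emph{Step 1 (integral representation of $H$).} Regularizing $\alpha$ to $\alpha+i\epsilon$ ensures $\lvert\e(n\alpha)\rvert<1$, so the geometric expansion $\e(n\langle p\rangle\alpha)/(1-\e(n\alpha)) = \sum_{m\geq 0}\e(n(m+\langle p\rangle)\alpha)$ converges. Applying the Hurwitz contour formula
$\zeta(s,a) = -\Gamma(1-s)(2\pi i)^{-1}\int_{I(\lambda,\infty)}t^{s-1}\e^{-at}/(1-\e^{-t})\,dt$
to each inner polylog sum and then summing the geometric series in $m$ (via $\sum_{m\geq 0}\e^{-(m+\langle p\rangle)\alpha t}=\e^{-\langle p\rangle\alpha t}/(1-\e^{-\alpha t})$) collapses $H$ into
\begin{equation*}
H(\alpha,s,p,q) = -(2\pi)^{-s}\e(-s/4)\int_{I(\lambda,\infty)} t^{s-1} G_{2}(\langle q\rangle+\langle p\rangle\alpha,(1,\alpha);t)\,dt + \chi(p)\cdot R(s,q),
\end{equation*}
where the Hurwitz-zeta remainder $R(s,q)$ appears only when $p\in\mathbb{Z}$, since then $\langle p\rangle=1$ and the geometric sum loses its $m=0$ term.

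\emph{Step 2 (modular transformation).} The same derivation applied to $\beta^{-s}H(V\alpha,s,p,q)$ gives an integral with periods $(1,V\alpha)$; the rescaling $t\mapsto t/\beta$ turns these into $(\beta, a\alpha+b)$. Since $\det V=1$, the lattice $\mathbb{Z}\beta+\mathbb{Z}(a\alpha+b)$ coincides with $\mathbb{Z}+\mathbb{Z}\alpha$, and a partial-fraction expansion of $1/(1-\e^{-(a\alpha+b)t})$ over the $c$-th roots of unity reassembles the integrand into $\sum_{j=1}^{c}$ of $G_2$-integrals with periods $(1,\beta)$. Tracking the $V$-action on $(p,q)\mapsto(p',q')$ together with $\rho=\{q'\}c-\{p'\}d$ identifies the $j$-th shifted argument as $1-\{(jd+\rho)/c\}+\beta(j-\{p'\})/c$, giving exactly $L(\alpha,s,p',q',c,d)$.

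\emph{Step 3 (matching).} The $(1,\alpha)$-integral dropping out of Step 2 recombines with Step 1 in the $(p',q')$-variables to yield $H(\alpha,s,p',q')$. The $\chi(p)$-correction is the LHS Step-1 remainder scaled by $\beta^{-s}$, and the $\chi(p')$-correction is the degeneracy of the $j$-summand in the partial fraction that becomes a pure Hurwitz generating function precisely when $p'\in\mathbb{Z}$, producing the pair $\zeta(s,\langle -q'\rangle)+\e(s/2)\zeta(s,\langle q'\rangle)$ with prefactor $(2\pi)^{-s}\e(-s/4)\beta^{-s}\Gamma(s)$. The principal obstacle is the arithmetic bookkeeping in Step 2: matching the partial-fraction expansion over $c$-th roots of unity to exactly the argument $1-\{(jd+\rho)/c\}+\beta(j-\{p'\})/c$ requires delicate manipulation of fractional parts under $V$ and consistent identification of the degenerate summand. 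A secondary difficulty is justifying the interchange of summation and integration in the $\epsilon\to 0^+$ limit, which invokes the Thue-Siegel-Roth lower bound on $\lvert 1-\e(n\alpha)\rvert$ underlying Theorem \ref{1.Berndt_Theorem}(i), together with uniform control of the integrand along the Hankel contour $I(\lambda,\infty)$.
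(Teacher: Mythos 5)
You should first be aware that the paper does not prove Theorem \ref{1.Arakawa_Theorem} at all: it is imported verbatim from Arakawa \cite{arakawa1} and used as background. The nearest thing to a proof in the paper is the proposition of Section \ref{Sect_BerndtArakawaResults}, which recovers (\ref{1.Arakawa_Equation}) only for integer $s<-1$, and does so by an entirely different route --- degenerating the elliptic transformation formula of Theorem \ref{4.Theorem1} as $\tau\to i\infty$ and comparing real parts via Proposition \ref{DegenerationProperties}. Your outline instead follows the classical contour-integral method (Hankel representation, $G_2$-integrands, decomposition into $c$ residue classes), which is essentially the strategy of Arakawa's original article rather than anything in this paper. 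So there is no internal proof to compare against; the only question is whether your sketch stands on its own.

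It does not, because every step that carries the actual content of the theorem is asserted rather than carried out. In Step 1 the claimed integral identity for $H(\alpha,s,p,q)$ is not derived, and the mechanism you give for the $\chi(p)$ remainder (``the geometric sum loses its $m=0$ term'' when $\langle p\rangle=1$) does not match what happens: for $p\in\mathbb Z$ the expansion $\sum_{m\ge0}\e(n(m+1)\alpha)$ merely reindexes to $\sum_{m\ge1}\e(nm\alpha)$, and the genuine source of the $\Gamma(s)\zeta(s,\langle q\rangle)$ terms is the mismatch between the range $n\ge1$ in $H$ and the full quadrant $m,n\ge0$ implicit in $G_2$, i.e.\ a degenerate boundary ray that must be split off explicitly; moreover the prefactors $\e(s/4)$ versus $\e(-s/4)$ only emerge after the two halves of $H$ (weighted by $\e(s/2)$) are recombined, and none of this bookkeeping appears. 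In Step 2 the identification of the $j$-th shifted argument as $1-\{(jd+\rho)/c\}+\beta(j-\{p'\})/c$ is precisely the theorem, and you explicitly defer it as ``arithmetic bookkeeping''; the same applies to the degenerate-summand analysis producing the $\chi(p')$ term in Step 3. Finally, the interchange of summation and integration --- the one place where the hypothesis that $\alpha$ is algebraic and the Thue--Siegel--Roth bound actually enter --- is named but not performed. As it stands the proposal is a reasonable high-level plan for reproving Arakawa's theorem by his own method, but it contains no step that can be verified.
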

\begin{remark}
	Let $u, \omega$ be positive real numbers, and $s$ a complex number.
	The Barnes zeta function
	\begin{equation*}
		\zeta_2(s, \omega, u)
		:=
		\sum_{m,n=0}^{\infty}
			\frac{1}{(u + m +n\omega)^s}
	\end{equation*}
	has been intensively studied by Barnes \cite{barnes1}.
	$L(\alpha, s, p', q', c, d)$  is rewritten in terms of Barnes zeta function as follows (see \cite[(1.18)]{arakawa2}) .
	\begin{equation*}
		L(\alpha, s, p', q', c, d)
		=
		- \Gamma(s) (\e (s) - 1)\sum_{j=1}^{c}
			\zeta_2 (s, \beta, 1-\{ \frac{jd + \rho}{c} \} +  \beta \frac{j -\{ p' \} }{c} ).
	\end{equation*}
	
\end{remark}

We define the function $\HH (s,\alpha,x,y)$ by
	\begin{equation}\label{1.definition_generalized_H}
		\HH (s,\alpha,x,y) := - H (\alpha,1-s,- y,x)
	\end{equation}
	which is modified the arguments $\alpha, s, x, y$ of $H (\alpha,s,x,y)$.
The purpose of the modification is to express our results easily
	(for example, see (\ref{Sect1.mod}) in which the argument $s$ is adjusted to that of $\xi (s, \alpha)$,
	and
	(\ref{4.theorem1a}) in which transformation formulae on the modular group is effectively described).
The cotangent Dirichlet series $\xi ( s, \alpha )$ is expressed by 
	the function $\HH (s,\alpha,x,y)$ with $(x,y)=(0,1)$:
\begin{equation}\label{Sect1.mod}
	\xi (s, \alpha)
	=
	-2i \big{(} \frac{1}{1+\e(s/2)} \HH (s, \alpha, 0, 1) + \frac{1}{2} \zeta (s) \big{)},
\end{equation}
	where
	$\zeta (s) := \sum\limits_{n=1}^{\infty} 1/n^s$ is the Riemann zeta function. 
Therefor we call $\HH (s,\alpha,x,y)$ $generalized$ $cotangent$ $seires$.

The first aim of this paper is to establish elliptic analogue 
	 $\HH (s, \alpha, x', x, y', y; \tau)$  to $\HH (s, \alpha,x,y)$
	 which we call 
	 \emph{elliptic generalized cotangent Dirichlet series} (see Definition \ref{2.def_H}).
The second is 
	to give transformation formulae of the elliptic generalized cotangent Dirichlet series
	at some integer arguments, or at $s \in \mathbb{Z}$ with $s > 2$ (see Theorem \ref{4.Theorem1}).
It should be noted that the argument $s$ is allowed any complex number $s$ with $\mathrm{Re}(s) > 1$
	in Arakawa's transformation formulae of $\HH (s, \alpha,x,y)$.
In particular, if $\alpha$ is a real quadratic number, 
	then $\xi (s, \alpha)$ is meromorphic with respect to $s$
	and its transformation formulae hold at all complex numbers $s$.  
(We note that 
	T. Arakawa \cite{arakawa2} gave certain relations 
	between residues at poles of $\xi(s, \alpha)$ and special values of partial zeta-function and Hecke L-functions.)
On the other hand, in our transformation formulae, the argument $s$  is restricted positive integers at least $3$.
The reason of the restriction is that it is difficult to apply Arakawa's way of establishing transformation formulae of  $\HH (s,\alpha,x,y)$ to our case.
For applying, we need  an integral representation of $\HH (s, \alpha, x', x, y', y; \tau)$ and an elliptic analogue of Barnes zeta function.
However,
	we can give Berndt's Theorem \ref{1.Berndt_Theorem} (ii) from our transformation formulae,
	because
	it is derived from Arakawa's transformation formulae in the case that $s$ is an odd integer with $s \geq 3$
	which follow from our transformation formulae of $\HH (s, \alpha, x', x, y', y; \tau)$ by $\tau \rightarrow i \infty$.

The paper is organized as follows:
In Section \ref{Sect_H}, we define the elliptic generalized cotangent Dirichlet seires $\HH (s, \alpha, x', x, y', y; \tau)$ 
	and show that it is absolutely convergent if $\mathrm{Re}(s)  > 2$.
In Section \ref{Sect_EDRSum},	 we prepare $Elliptic\  Dedekind$-$Rademacher\  sums$
	for next Section \ref{Sect_Transformation}
	in which we establish transformation formulae of $\HH (s, \alpha, x', x, y', y; \tau)$
	with integers $s > 2$.
In Section \ref{Sect_BerndtArakawaResults}, 
	from our transformation formulae, 
	we derive (\ref{1.Arakawa_Equation}) with integers $s < -1$
	and Theorem \ref{1.Berndt_Theorem} (ii).
Section \ref{Sect_Lemma1} and \ref{Sect_Lemma2} devote the proofs of Lemma \ref{4.Lemma1} and \ref{4.Lemma2} 
	in Section \ref{Sect_Transformation} respectively.
   
Throughout the paper, let 
	$\alpha$ be an irrational algebraic number over $\mathbb{Q}$,
	$s$ a complex number, 
	$y', y, x', x$ real numbers,
	$\tau$ a complex number with positive imaginary part,
	and $\mathrm{SL}_{2} (\mathbb{Z})$ the modular group.
	If $\mathbb{A}$ is a ring, 
		$\mathrm{M}_2 (\mathbb{A})$ means the set of two by two matrixies whose entries are in $\mathbb{A}$.
We use the following notions:  
	$\e(x) := \e ^{2\pi i ix}$, 
	$V z := (az + b)/(cz + d)$ 
	and
	$j(V;z) := cz + d$
	where $z \in \mathbb{C}$ and 
	$V = \bigl( \begin{smallmatrix} a & b \\ c & d  \end{smallmatrix} \bigr)
		\in \mathrm{SL}_2 (\mathbb{Z})$.

\section{Elliptic generalized cotangent Dirichlet series} \label{Sect_H}
In this section, we fulfill the first aim of this paper, or define the elliptic generalized cotangent Dirichlet series  $\HH (s, \alpha, x', x, y', y; \tau)$
	as analogue to generalized cotangent Dirichlet series $\HH (s, \alpha, y, x; \tau)$.
For the aim, we introduce the function $\F (x', x;X;\tau)$ which is analogue to 
	$\dfrac{\e(n\langle x \rangle \alpha)}{1-\e(n \alpha)}$
	used in $\HH (s,\alpha,x,y) $.

Let $q=\e(\tau)$.
The function $\F (x', x;X;\tau)$ is built by Jacobi's theta function
	\begin{align}\label{2.theta}
 		\theta (x ;\tau) 
 		&:= 
 		\sum_{m \in \mathbb Z} \e (\frac 12(m+\frac 12)^2\tau +(m+\frac 12)(x+\frac 12))
 		\\
 		& =
 		i q^{1/8} (\e(\frac{x}{2}) - \e(-\frac{x}{2}))
 			\prod_{m=1}^{\infty}
		(1-\e(-x)q^m)(1-\e(x)q^m)(1-q^m)  \nonumber
	\end{align}
	which is an odd and quasi periodic entire function:
\begin{equation}\label{2.theta_quasi}
\begin{split}
	\theta (-x;\tau) &= -\theta (x;\tau), \\
 	\theta (x+1 ;\tau) &=  -\theta (x ;\tau) , \\ 
	\theta (x+\tau ;\tau) &=  -\e(-\frac{\tau}{2} -x)  \theta (x ;\tau). 
\end{split}
\end{equation}
Let $\theta '(x ;\tau)$ denote the derivative of $\theta (x ;\tau)$ with respect to $x$.
For any two tuple of real numbers $\vec{x} = (x',x) \in \mathbb{R}^2 \setminus \mathbb{Z}^2$,
	the function $\F (\vec{x};X;\tau)$ is defined by
	\begin{equation}\label{2.definition_K}
		\F (\vec{x};X;\tau) :=
		\e (x X)
		\frac{\theta '(0;\tau) \theta(-x' +x\tau +X;\tau)}{\theta(-x' + x \tau;\tau) \theta(X;\tau)}
	\end{equation}
	which is Kronecker double series introduced in \cite{weil} and essentially a meromorphic Jacobi form studied in \cite[Section 3]{zagier1}.
Some fundamental properties of $\F (\vec{x};X;\tau)$ are following: 
As a function with respect to $X$,
	it is meromorphic with only simple poles on the lattice 
 	$\mathbb Z +\tau \mathbb Z$.
By (\ref{2.theta_quasi}) it has properties
	\begin{align}
		\F (\vec{x};X +1;\tau) &= \e (x) \F (\vec{x};X;\tau),&
		\F (\vec{x};X +\tau;\tau) &= \e (x') \F (\vec{x};X;\tau),& \label{2.key_formula1} \\
		\F (-\vec{x};-X;\tau) &= - \F (\vec{x};X;\tau),&
		\F (\vec{x} +\vec{a};X ;\tau) &= \F (\vec{x};X;\tau)& \label{2.key_formula2}
	\end{align}
	where $\vec{a} = (a', a) \in \mathbb{Z}^2$.

If $y \notin \mathbb{Z}$, we find from (\ref{1.definition_H}) and (\ref{1.definition_generalized_H})
	that the infinite series $\HH (s, \alpha, y, x)$ is express as
	\begin{equation} \label{2.infiniteExpression_H}
		\HH (s, \alpha, y, x)
		=
		\sum_{m \in \mathbb{Z} \atop (m \neq 0)}
			\frac{\e(mx)}{m^s} \frac{\e(\alpha m \langle -y \rangle)}{\e(\alpha m) -1}.
	\end{equation}
By considering $\F (\vec{x};X;\tau)$ to be an elliptic generalization to $\e(\langle x \rangle X)/(\e(X) -1)$, 
	we define the elliptic analogue $\HH (s, \alpha, x', x, y', y; \tau)$
	of $\HH (s, \alpha, y, x; \tau)$ as follows:
\begin{definition}\label{2.def_H}
	Let $\alpha$ be an irrational real algebraic number
		and $s$ a complex number with $\mathrm{Re}(s) > 2$.
	For four real numbers $x', x, y', y$ with $(y', y) \notin \mathbb{Z}^2$,
		the elliptic generalized cotangent Dirichlet series $\HH (s, \alpha, x', x, y', y; \tau)$
		is defined by
		\begin{equation} \label{3.infinite_series}
			\HH (s, \alpha, x', x, y', y; \tau)
			:=
			\sideset{}{^\prime}\sum_{m', m}
			\frac{\e (m'x' +mx)}{(\tau m' +m)^{s}}
			\F (-\vec{y}; \alpha (\tau m' +m); \tau)
		\end{equation}
		where the summation 
		ranges over all elements in $\mathbb{Z}^2$ except $(0,0)$.
\end{definition}
We prove the convergency of $\HH (s, \alpha, x', x, y', y; \tau)$.
\begin{lemma}\label{3.Lemma1}
	For any complex number $s$ with $\mathrm{Re}(s) > 2$,
		$\HH (s, \alpha, x', x, y', y; \tau)$
		absolutely converges.
\end{lemma}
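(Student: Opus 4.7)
The plan is to estimate the series term-by-term. Since $x',x\in\mathbb{R}$, $|\e(m'x'+mx)|=1$, so absolute convergence is equivalent to
\[
\sum_{(m',m)\neq(0,0)}\frac{|\F(-\vec{y};\alpha(\tau m'+m);\tau)|}{|\tau m'+m|^{\mathrm{Re}(s)}}<\infty,
\]
and I would split this sum into the two subcases $m'=0$ and $m'\neq 0$, since in the first the lattice vector $\tau m'+m$ collapses to the real line while in the second $|\tau m'+m|$ grows like $|m'|$.

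For the bound on $|\F|$, the quasi-periodicity (\ref{2.key_formula1}) together with $y',y\in\mathbb{R}$ implies that $|\F(-\vec{y};X;\tau)|$ depends only on $X$ modulo the lattice $\mathbb{Z}+\tau\mathbb{Z}$. Choosing $\tilde{X}$ to be the representative of $\alpha(\tau m'+m)$ in the centered fundamental parallelogram, substituting the product formula (\ref{2.theta}) into the definition (\ref{2.definition_K}), and noting that the remaining infinite products are uniformly bounded above and below on the parallelogram, I would obtain $|\F(-\vec{y};\tilde{X};\tau)|\lesssim 1/|\sin(\pi\tilde{X})|$, with the implied constant depending only on $\tau$ and $\vec{y}$. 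The identity $|\sin(\pi z)|^2=\sin^2(\pi\mathrm{Re}(z))+\sinh^2(\pi\mathrm{Im}(z))$, together with $\mathrm{Im}(\tilde{X})=\pm\mathrm{Im}(\tau)\cdot\|\alpha m'\|$, then yields
\[
|\F(-\vec{y};\alpha(\tau m'+m);\tau)|\ \lesssim\ \begin{cases}1/\|\alpha m\|,&m'=0,\\ 1/\|\alpha m'\|,&m'\neq 0.\end{cases}
\]

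Finally, I would invoke the Thue--Siegel--Roth theorem for the algebraic irrational $\alpha$, as used in \cite[Lemma 1]{arakawa1}: for every $\varepsilon>0$, $\|\alpha m\|\gtrsim|m|^{-1-\varepsilon}$. For $m'=0$ this immediately gives $\sum_{m\neq 0}|m|^{1+\varepsilon-\mathrm{Re}(s)}<\infty$ whenever $\mathrm{Re}(s)>2+\varepsilon$, and therefore for every $\mathrm{Re}(s)>2$. For $m'\neq 0$ the Eisenstein-type estimate $\sum_{m\in\mathbb{Z}}|\tau m'+m|^{-\mathrm{Re}(s)}\lesssim|m'|^{1-\mathrm{Re}(s)}$ (valid for $\mathrm{Re}(s)>1$ by integral comparison with a Gaussian denominator) reduces the remaining claim to
\[
\sum_{m'\neq 0}\frac{|m'|^{1-\mathrm{Re}(s)}}{\|\alpha m'\|}<\infty.
\]
The pointwise Roth bound alone only delivers this for $\mathrm{Re}(s)>3$; the gap from $3$ down to $2$ would be closed by the averaged Diophantine estimate $\sum_{m=1}^{N}1/\|\alpha m\|=O(N^{1+\varepsilon})$, a standard consequence of Roth combined with a dyadic decomposition and the discrepancy bound for $\{m\alpha\}$, followed by Abel summation on $m'$. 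This averaging step is the main technical hurdle; once it is in hand, routine estimation yields absolute convergence throughout $\mathrm{Re}(s)>2$.
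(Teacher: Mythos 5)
Your argument is correct and reaches the full range $\mathrm{Re}(s)>2$, but it organizes the double sum genuinely differently from the paper. You discard one of the two Diophantine factors: for $m'\neq 0$ you bound $\abs{\F}$ by a constant times $1/\abs{\ffrp{\alpha m'}}$ alone, collapse the inner sum over $m$ by the Eisenstein estimate $\sum_m\abs{\tau m'+m}^{-\sigma}\lesssim\abs{m'}^{1-\sigma}$, and are then forced to control $\sum_{m'}\abs{m'}^{1-\sigma}/\abs{\ffrp{\alpha m'}}$ via the averaged bound $\sum_{m\le N}1/\abs{\ffrp{\alpha m}}=O(N^{1+\varepsilon})$ together with Abel summation. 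The paper instead keeps both factors: it uses $\abs{X\,\F(\vec x;X;\tau)}\le C$ on the fundamental parallelogram (your $1/\abs{\sin\pi\tilde X}$ bound is an equivalent substitute) together with the inequality $\abs{\tau r'+r}^2\ge 2(\abs{\tau}-\abs{\mathrm{Re}\,\tau})\abs{r'r}$ applied twice --- once to $(m',m)$ and once to $(\ffrp{\alpha m'},\ffrp{\alpha m})$ --- so that the off-axis part of the double sum factors as a product of two one-dimensional sums of the form $\sum_m\abs{m}^{-s/2}\abs{\ffrp{\alpha m}}^{-1/2}$, each of which converges for $\mathrm{Re}(s)>2$ directly by comparison (using $1/\sqrt{x}<1/x$ on $(0,1)$) with the series $\sum_m m^{-1-\epsilon}\abs{\ffrp{\alpha m}}^{-1}$ that Arakawa proved convergent. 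The two routes rest on the same Diophantine input --- your averaged estimate is equivalent, by partial summation, to the convergence of Arakawa's series --- but the geometric-mean factorization lets the paper quote that series verbatim, whereas your route leaves the dyadic/spacing argument for the averaging step as an unproved (though standard and true) auxiliary lemma, which you rightly identify as the main technical hurdle of your approach.
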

\begin{proof}
	Let $LHS$ mean 
		the absolute value of $\HH (s, \alpha, x', x, y', y; \tau)$.
	Set $\vec{x} = (x', x)$ and $\vec{y} = (y', y)$.
	Because the function $\F (x', x' ;X;\tau)$ with respect to $X$ 
 		is meromorphic with only simple poles on the lattice 
 		$\mathbb Z +\tau \mathbb Z$,
 		there is a positive real number $C = C (x', x, \tau)$ depending on $x', x, \tau$
 		which satisfy the following property:
 	If $X= \tau \xi' +\xi$ is a complex number with 
 		$-1/2 \leq \xi', \xi \leq 1/2$, then
		\begin{equation*}
 			\lvert X \F (\vec{x}; X; \tau) \rvert  \leq C .
		\end{equation*} 
 	For any real number $r$, 
 		let $[[ r ]]$ and
 		$\langle \langle r \rangle \rangle$ mean the integer and the real number satisfying
 		$-1/2 < \langle \langle r \rangle \rangle \leq 1/2$ and $r = [[r]] +  \langle \langle r \rangle \rangle $.
 	By (\ref{2.key_formula1}),
		\begin{equation*}
 			\F (\vec{x}; \alpha (\tau m' +m); \tau) 
			=
			\e(x' [[\alpha m' ]] +x [[\alpha m]])
			\F (\vec{x}; \tau \langle \langle \alpha m'\rangle \rangle +\langle \langle \alpha m \rangle \rangle; \tau) ,
		\end{equation*}
		thus we have
		\begin{equation*}
			LHS \leq C \sideset{}{^\prime}\sum\limits_{m', m}
 			\dfrac{1}{{\lvert \tau m' +m \rvert }^{s}}
			\frac{1}{\lvert \tau \langle \langle \alpha m'\rangle \rangle +\langle \langle \alpha m \rangle \rangle  \rvert}.
		\end{equation*}
	If $r'$ and $r$ are real numbers, it holds that 
 		\begin{equation}\label{3.lemma1aa}
 			\begin{split}
  				&  \abs{\tau r' +r} \geq  \abs{\mathrm{Im}(\tau)} \times \bigl(  \max \{ \abs{r'},  \abs{r} \}\bigr), \\
  				&  {\lvert \tau r' +r \rvert}^2  \geq 2 (\lvert \tau \rvert - \lvert \mathrm{Re}\ \tau \rvert) \lvert r' r \rvert.
 			\end{split}
 	 	\end{equation}
 	The first inequality is derived from 
		\begin{equation*}
			\abs{\tau r' +r}^2 
				=
				\abs{\tau}^2 r'^2 + (\tau + \overline{\tau}) r' r + r^2 
				=
				{(\mathrm{Re}(\tau) r'  + r)^2 + (\mathrm{Im}(\tau) r' })^2 \geq  (\mathrm{Im}(\tau) r')^2
		\end{equation*}
		and $\abs{\tau r' +r}^2 = \abs{\tau}^2 \abs{ r' + (1/ \tau)r}^2$.
	Since 
		\begin{equation*}
			0 \leq (\abs{\tau} r' \pm r)^2 =  \abs{\tau}^2 r'^2 + r^2 \pm 2 \abs{\tau} r'r, 
		\end{equation*}
	one has 
		\begin{equation*}
			\abs{\tau r' \pm r}^2 - (\tau + \overline{\tau}) r'r 
			= 
			\abs{\tau}^2 r'^2 + r^2  
			= 
			\mp 2 \abs{\tau} r'r
			\geq 
			2 \abs{\tau} \abs{r' r}.
		\end{equation*}	
	Thus the second is derived from
		\begin{equation*}
			\abs{\tau r' \pm r}^2  \geq 2 \abs{\tau} \abs{r' r} + (\tau + \overline{\tau}) r'r \geq (2 \abs{\tau} - \abs{\tau + \overline{\tau}} )\abs{r' r}
			=
			2(\abs{\tau}-\abs{\mathrm{Re}\ \tau})\abs{r' r}.
		\end{equation*}
	Since $\lvert \tau \rvert - \lvert \mathrm{Re}\ \tau \rvert$ is positive because of $\mathrm{Im}(\tau) > 0$,
	there is a positive real number $D = D(\tau)$ depending on $\tau$ such that the following inequality holds:
	\begin{eqnarray}\label{2.proofOfLemma_11}
		& &
		LHS 
		\\
		&\leq&
		C D 
		\Biggl[
			\sideset{}{^\prime}\sum_{m'} \sideset{}{^\prime}\sum_{m}
				\frac{1}{{\lvert m' \rvert}^{s/2} {\lvert m \rvert}^{s/2} 
				{\lvert \langle \langle \alpha m' \rangle \rangle \rvert}^{1/2}
		 		{\lvert \langle \langle \alpha m \rangle \rangle \rvert}^{1/2}} 
			\nonumber \\
			& & \hspace{50pt}
			+ 
			\sideset{}{^\prime}\sum_{m'} \frac{1}{{\lvert m' \rvert}^{s} 
				\lvert \langle \langle \alpha m' \rangle \rangle \rvert}
			+ 
			\sideset{}{^\prime}\sum_{m} \frac{1}{{\lvert m \rvert}^{s} 
				\lvert \langle \langle \alpha m \rangle \rangle \rvert}
		\Biggr]
		\nonumber \\
		& \leq &
		C D 
		\Biggl[
			\Bigl(
				\sideset{}{^\prime}\sum_{m'} 
					\frac{1}
					{{\lvert m' \rvert}^{s/2} {\lvert \langle \langle \alpha m' \rangle \rangle \rvert}^{1/2}} 
			\Bigr)
			\Bigl(
				\sideset{}{^\prime}\sum_{m}
					\frac{1}{{\lvert m \rvert}^{s/2} {\lvert \langle \langle \alpha m \rangle \rangle \rvert}^{1/2}} 
			\Bigr)
			\nonumber \\
			& & \hspace{50pt}
			+ 
			\sideset{}{^\prime}\sum_{m'} \frac{1}{{\lvert m' \rvert}^{s} 
				\lvert \langle \langle \alpha m' \rangle \rangle \rvert}
			+ 
			\sideset{}{^\prime}\sum_{m} \frac{1}{{\lvert m \rvert}^{s} 
				\lvert \langle \langle \alpha m \rangle \rangle \rvert}
		\Biggr]	
		\nonumber,
	\end{eqnarray}
		where the summation $\sum_{m'}'$ and $\sum_{m}'$ 
		range over all integers except zero respectively. 
	T. Arakawa showed that the series 
		$\sum\limits_{m=1}^{\infty} \dfrac{1}{m^{1+\epsilon} \lvert \langle \langle \alpha m \rangle \rangle\rvert}$
 		converges if $\epsilon >0$
		in the proof of \cite[Lemma 1]{arakawa1}.
	Because $1/\sqrt{x} < 1/x $ if $0 < x < 1$,
		the series 
		$\sum\limits_{m=1}^{\infty} \dfrac{1}{m^{1+\epsilon} {\lvert \langle \langle \alpha m \rangle \rangle\rvert}^{1/2}}$
		does too.
	Therefore the right hand side of (\ref{2.proofOfLemma_11}) converges
		which complete the proof.
\end{proof}

\section{Elliptic Dedekind-Rademacher sums } \label{Sect_EDRSum}
We introduce $Elliptic$ $Dedekind$-$Rademacher$
	$sums\ S_{m,n}(r, x', x, y', y; \tau) $
	for describing transformation formulae of $\HH (s, \alpha, x', x, y', y; \tau)$
	which will be established next section.

These sums are built up by $elliptic\ Bernoulli\ functions\ B_m(x', x;\tau) $ which are defined by 
	use of the generating function $\F (x' ,x ;X;\tau)$:
\begin{equation}\label{3.definition_B}
	\F (x' ,x ;X;\tau) =
	\sum_{m=0}^{\infty}
		\frac{B_m(x' ,x ;\tau)}{m!} {(2\pi i)}^{m} X^{m-1}.
\end{equation}
Elliptic Bernoulli functions are a part of Kronecker's double series (see \cite{weil}).
They have the following explicit expressions (for example, see \cite{machide})
	\begin{multline} \label{Sect3.FouExp}
	 	B_m(\vec{x};\tau) 
	 	=
		m \biggl( 
			\sum_{j=1}^{\infty} (x-j)^{m-1} \frac{\e(-x\tau) q^j}{\e (-x') -\e (-x\tau)q^j} 
		 	 \\
			-\sum_{j=1}^{\infty} (x+j)^{m-1}\frac{\e(x\tau)q^j}{\e (x')-\e(x\tau)q^j} 
			+
			x^{m-1} \frac{\e (-x'+x\tau)}{\e (-x'+x\tau) -1}
		\biggr)  
		+
		B_m(x),
	\end{multline}
	where $q = \e(\tau)$ and $B_m(x)$ are Bernoulli polynomials defined by 
	$\dfrac{\e (x X)}{\e (X)-1} =
		\sum\limits_{m=0}^{\infty}
		\dfrac{B_m(x)}{m!} (2 \pi i X)^{m-1}$. 
For $\vec{x} = (x', x) \in \mathbb{R}^2 \setminus \mathbb{Z}^2$ and $\vec{a} = (a', a)  \in \mathbb{Z}^2$,
	they satisfy by (\ref{2.key_formula2}) that
	\begin{equation}\label{2.key_formula3}
		B_m (-\vec{x};\tau) = (-1)^m B_m (\vec{x};\tau), 
		\quad
		B_m (\vec{x} +\vec{a} ;\tau) = B_m (\vec{x};\tau).
	\end{equation}
We note that,
	for $ \vec{x} \in \mathbb{R}^2$, 
	$B_m(\vec{x};\tau)$
	is discontinuous if $m = 1,2$ and $ \vec{x} \in \mathbb{Z}^2$,  
	and continuous otherwise because of (\ref{Sect3.FouExp}).

For defining the elliptic Dedekind-Rademacher sums $S_{m,n}(r, x', x, y', y; \tau)$,
	we prepare the $numerator$ and $denominator$ maps $n$ and $d$ defined by
	\begin{equation} \label{3.Def_ndNumber}
 		\begin{matrix}
			& n: \mathbb{Q} \to \mathbb{Z},  \quad & r \mapsto p,\\
			& d: \mathbb{Q} \to \mathbb{Z}_{>0},  \quad &  r \mapsto q
 		\end{matrix}
	\end{equation} 
	where $p, q$ are a unique pare of integers such that  
	$\gcd (p, q) =1, q \geq 1$ and $r = p/q$.

\begin{definition}\label{3.definition_ERDSUM}
	For nonnegative integers $m, n$, any rational number $r \neq 0$ and real numbers $x', x, y', y$, 
		we define elliptic Dedekind-Rademacher sums by
		\begin{equation}\label{2.elliptic_Dedekind-Rademacher_sum}
			S_{m,n}(r, x', x, y', y; \tau)
			:=
			\frac{1}{d(r)}
			\sum_{j', j (   d(r) )} 
			B_m \Bigl(\frac{\vec{j} + \vec{y}}{d(r)}; \tau \Bigr)
			B_n \Bigl(n(r) \frac{\vec{j} + \vec{y}}{d(r)} - \vec{x}; \tau \Bigr),
		\end{equation} 
		where the summation ranges over $(j',j) \in {(\mathbb{Z}/d(r)  \mathbb{Z})}^2$
		and  $\vec{j}, \vec{x}, \vec{y}$ denotes the vectors $(j',j), (x',x), (y',y)$ respectively.
	Because of discontinuity of $B_1 (\vec{z}; \tau )$ and $B_2 (\vec{z}; \tau )$ at $ \vec{z} \in \mathbb{Z}^2$,
		we suppose that 
		$\vec{y}, n(r) \vec{y} - d(r) \vec{x} \notin \mathbb{Z}^2$
		if $m, n \in \{ 1,2 \}$.
\end{definition}
We note 
	the relation between these sums and 
	the sums $S_{m,n}^{\tau} \bigl(
	\begin{smallmatrix}
        		(a',a)  &  (b',b)  & (c',c)     \\
        		(x',x)  &  (y',y)  & (z',z)    \\
	\end{smallmatrix} \bigr)$
	defined in \cite{machide}:
	\begin{equation}\label{2.relation}
		S_{m,n}(r, x', x, y', y; \tau)
		=
		S_{m,n}^{\tau}
		\begin{pmatrix}
        			(1,1)  &  (n(r),n(r))  & (d(r),d(r))     \\
        			(0,0)  &  (x',x) & (y', y)     \\
		\end{pmatrix}.
	\end{equation}

\section{Transformation formulae of elliptic generalized cotangent Dirichlet series} \label{Sect_Transformation}
In this section, we give transformation formulae of 
	elliptic generalized cotangent Dirichlet series $\HH (s, \alpha, x', x, y', y; \tau)$
	at integers $s > 2$,
	which is  the second aim of this paper.

Firstly we modify and prepare some notations for describing the transformation formulae simply.
The four arguments $(x', x, y', y)$ used in 
	$\HH (s, \alpha, x', x, y', y; \tau)$ and $S_{m,n}(r, x', x, y', y; \tau)$
	are rewritten in terms of a matrix element of $\mathrm{M}_{2} (\mathbb{R})$, or
	$M=
	\bigl(
	\begin{smallmatrix} 
		x' & x  \\ 
		y' & y
	\end{smallmatrix}
	\bigr). $
Thus 
	\begin{equation}\label{4.equa_Modify}
		\begin{split}
			&
			\HH (s,\alpha, M; \tau)
			:=
			\HH (s, \alpha, x', x, y', y; \tau), \\
			&
			S_{m,n}(r, M; \tau)
			:=
			S_{m,n}(r, x', x, y', y; \tau).
		\end{split}
	\end{equation}
Hereafter we sometimes write the matrix $M$ as 
	$\bigl( \begin{smallmatrix} \vec{x}  \\ \vec{y} \end{smallmatrix} \bigr) $
	where $\vec{x} = (x', x)$ and $\vec{y} = (y', y)$.
If $m \in \mathbb{Z}$, $z \in \mathbb{C}$, 
	$M = 
		\bigl( \begin{smallmatrix} \vec{x}  \\  \vec{y} \end{smallmatrix} \bigr) 
		\in \mathrm{M}_{2} (\mathbb{R})$
	and
	$V =
		\bigl(
			\begin{smallmatrix} a & b  \\  c & d \end{smallmatrix}
		\bigr)
		\in \mathrm{SL}_{2} (\mathbb{Z})$
	with
	$\vec{y}, c \vec{x}+d \vec{y} \notin \mathbb{Z}^2$,
	then we define polynomials $R_V (l, z, M ; \tau)$ of $z$ by 
	\begin{multline}
		R_V (l, z, M; \tau)\\
		:=
		 \begin{cases} \displaystyle{}
		 	\frac{(2 \pi i)^{l+1}}{(l+1)!}
			\sum_{k=-1}^{l}
			\binom{l+1}{k+1} 
			(-j(V;z))^k
				S_{k+1,l-k} 
				\biggl( \frac{d}{c},
					\bigl(\begin{smallmatrix} -\vec{x}  \\ \vec{y}\end{smallmatrix} \bigr) ; \tau \biggr) 
				& 
				( c > 0), \\
			0 & (c =0), \\
			R_{-V} (l, z, M; \tau) & (c < 0)
		\end{cases}
	\end{multline}
	where $j(V;z) := c z + d$ is the automorphic factor on $\mathrm{SL}_{2} (\mathbb{Z})$.   
We note that these polynomials derive period polynomials as follows (See \cite{fukuhara1} about period polynomials): 
If $V =
	\bigl(
	\begin{smallmatrix} 
		0 & -1  \\ 
		1 & 0
	\end{smallmatrix}
	\bigr),
	$
	then
	\begin{equation*}
		R_V (l, z, M; \tau) 
		 =
		 \frac{(2 \pi i)^{l+1}}{(l+1)!}
		\sum_{k=-1}^{l}
		\binom{l+1}{k+1} 
		(-z)^k B_{k+1}(\vec{y}; \tau) B_{l-k}(\vec{x}; \tau)
	\end{equation*}
Therefore 
	\begin{equation*}
		\lim_{\tau \rightarrow i \infty} R_V (l, z, M; \tau) 
 		=
		\frac{(2 \pi i)^{l+1}}{(l+1)!}
		\sum_{k=-1}^{l}
		\binom{l+1}{k+1}
		(-z)^k \B_{k+1}(y) \B_{l-k}(x)
	\end{equation*}
	where $\B_n(x)$ are bernoulli functions.
By tending $x$ and $y$ to $0$, we obtain the period polynomials over $\mathbb{Q}$.  
\begin{remark}
	When $\tau$ tends to $i \infty$,
		$R_V (m, z, M; \tau) $ are equal to $f_{l+1} (-d,c; z)$ in \cite[(1.2)]{carlitz1} up to a constant.
	Thus
		(\ref{4.lemma2a}) below can be considered as
		elliptic generalizations of Carlitz's reciprocity relations \cite[(1.11)]{carlitz1}.
\end{remark}

The transformation formulae are as follows:
\begin{theorem}\label{4.Theorem1}
	For $V = \bigl(
		\begin{smallmatrix}
        			a  &  b       \\
        			c  &  d    
		\end{smallmatrix} \bigr) \in \mathrm{SL}_2 (\mathbb{Z})$,
		let $\mathrm{M}_2 (V)$ be the subset of $\mathrm{M}_2 (\mathbb{R})$ 
		defined by
		\begin{equation}\label{4.definition_SM}
			\mathrm{M}_2 (V)
			:=
			\biggl{\{}
			\begin{pmatrix} 
				x' & x  \\ 
				y' & y
			\end{pmatrix}
			=
			\begin{pmatrix} 
				\vec{x}  \\ 
				\vec{y}
			\end{pmatrix} 
			\in \mathrm{M}_2 (\mathbb{R})
			\bigg{\vert}
			\vec{y}, c \vec{x}  + d \vec{y}\notin \mathbb{Z}^2
			\biggr{\}}.
		\end{equation}
	Let $\alpha$ be an irrational real algebraic number.
	If $l \in \mathbb{Z}$ with $l \geq 3$,
 		$V \in \mathrm{SL}_2 (\mathbb{Z})$
		and	$M \in \mathrm{M}_2(V)$, it holds that
	\begin{equation}\label{4.theorem1a}
		\HH (l, \alpha, M; \tau)
		-
 		j(V;\alpha)^{l-1}
 		\HH (l, V\alpha, VM; \tau)
		=
		R_V (l, \alpha, M; \tau).
	\end{equation}
\end{theorem}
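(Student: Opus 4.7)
The plan is to compare $\HH(l,\alpha,M;\tau)$ with $j(V;\alpha)^{l-1}\HH(l,V\alpha,VM;\tau)$ directly at the level of the double series, reducing first to the case $c>0$ and then using a residue-class decomposition modulo $c$ together with the two technical lemmas stated below. When $c=0$, one has $ad=1$ with $a=d=\pm 1$, so $V\alpha = \alpha+ab$ (an integer shift of $\alpha$) and $j(V;\alpha)=\pm 1$; using the quasi-periodicity (\ref{2.key_formula1}) and parity (\ref{2.key_formula2}) of $\F$ to absorb the integer shifts that appear in the $\F$-factor, a direct calculation yields $\HH(l,\alpha,M;\tau)=j(V;\alpha)^{l-1}\HH(l,V\alpha,VM;\tau)$, matching $R_V=0$. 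For $c<0$, the sign-flip $(m',m)\mapsto-(m',m)$ inside the series yields $\HH(l,\alpha,-M;\tau)=(-1)^{l+1}\HH(l,\alpha,M;\tau)$; combined with $j(-V;\alpha)=-j(V;\alpha)$, this reduces the claim for $V$ to the claim for $-V$ (which has $c>0$), consistently with the definition $R_{-V}=R_V$.

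For the main case $c>0$, I would partition the summation over $(m',m)$ by residues modulo $c$: write $m'=cq'+j'$ and $m=cq+j$ with $(j',j)\in\{0,\dots,c-1\}^2$ and $(q',q)\in\mathbb{Z}^2$, so that the quasi-periodicity (\ref{2.key_formula1}) applied to $\F(-(c\vec{x}+d\vec{y});V\alpha(\tau m'+m);\tau)$ peels off explicit character factors in $(q',q)$. The remaining inner series over $(q',q)$, scaled by $j(V;\alpha)^{l-1}=\beta^{l-1}$, should then be realigned with the corresponding coset piece of $\HH(l,\alpha,M;\tau)$. Lemma \ref{4.Lemma1} (proved in Section \ref{Sect_Lemma1}) is expected to carry out this rearrangement precisely, expressing the difference $\HH(l,\alpha,M;\tau)-\beta^{l-1}\HH(l,V\alpha,VM;\tau)$ as a finite explicit sum over $(j',j)\pmod c$ of products $B_{m}(\cdot;\tau)B_{n}(\cdot;\tau)$ of elliptic Bernoulli functions, these products arising from the Laurent expansion (\ref{3.definition_B}) of $\F$ near the lattice. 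Lemma \ref{4.Lemma2} (proved in Section \ref{Sect_Lemma2}) will then identify this finite combination with $R_V(l,\alpha,M;\tau)$ via the defining formula (\ref{2.elliptic_Dedekind-Rademacher_sum}) of the sums $S_{m,n}(d/c,\cdot;\tau)$ (noting that $\gcd(c,d)=1$ makes $n(d/c)=d$ and $d(d/c)=c$), together with the binomial-weighted presentation of $R_V$ in which $(-j(V;\alpha))^k=(-\beta)^k$ emerges from a Taylor expansion in $\beta$.

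The main obstacle will be carrying out the coset decomposition and matching in the $c>0$ case. Because $\alpha$ is irrational, there is no lattice substitution in $(m',m)$ that converts $V\alpha$ directly back to $\alpha$, so the identification of the two double series must proceed through the Laurent expansion of $\F$; only finitely many coefficients (those indexed by $k=-1,0,\dots,l$ in the definition of $R_V$) survive because of the $l$-th power in the denominator, which is also why $l$ must be an integer and why the correction is polynomial of degree $l$ in $\beta$. Absolute convergence at each step of the rearrangement is guaranteed by $l\geq 3$ via Lemma \ref{3.Lemma1}, so that all the needed reorderings are legitimate.
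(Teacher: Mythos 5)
There is a genuine gap: your plan for the main case $c>0$ --- a residue-class decomposition of $(m',m)$ modulo $c$ followed by a matching of the two double series via the Laurent expansion of $\F$ --- is not actually carried out, and the mechanism you invoke cannot work as described. Since $\alpha$ is irrational (as you yourself note), no lattice substitution relates the points $V\alpha(\tau m'+m)$ to points $\alpha(\tau n'+n)$, and the Laurent expansion (\ref{3.definition_B}) is an expansion of $\F$ about $X=0$, which says nothing about the values $\F(-\vec{y_0};V\alpha(\tau m'+m);\tau)$ at arguments spread over the whole plane. Moreover, you have misread the roles of the two lemmas you lean on: Lemma \ref{4.Lemma1} does \emph{not} perform a coset rearrangement for general $c>0$ --- it establishes (\ref{4.theorem1a}) only for the generators $T^{\pm}$ and $S$ --- and Lemma \ref{4.Lemma2} is a purely algebraic cocycle identity $R_{V_1}+j(V_1,z)^{l-1}R_{V_2}\circ V_1=R_{V_2V_1}$ among the correction polynomials, not a device for identifying a decomposed series with $R_V$. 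So the entire analytic content of the $c>0$ case is deferred to lemmas that do not supply it.

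The paper's actual route is different and sidesteps your obstacle entirely: it proves (\ref{4.theorem1a}) only for $V\in\{T^{\pm},S\}$ (Lemma \ref{4.Lemma1}, whose proof for $S$ rests on Kronecker's lattice-sum representation of $\F$, a partial-fraction identity in $X,Y$, and the interchange-of-summation Proposition \ref{5.Proposition1} --- with $l=3$ recovered from $l=4$ by differentiation, a point your blanket appeal to $l\geq 3$ and Lemma \ref{3.Lemma1} glosses over), and then extends to all of $\mathrm{SL}_2(\mathbb{Z})$ by induction on the length of a word in these generators, using the cocycle identity of Lemma \ref{4.Lemma2} to assemble $R_V$ from $R_{V_1}$ and $R_{V_2}$, followed by a density argument in $M$. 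Your treatment of the easy cases $c=0$ and $c<0$ is correct and matches the paper, but to repair the proposal you would need either to adopt this generator-plus-cocycle induction or to supply a genuinely new direct argument (an analogue of the partial-fraction manipulation adapted to $j(V;\alpha)=c\alpha+d$ with $c>1$, together with a justification of the rearrangement of the resulting four-fold conditionally convergent sums), neither of which is present.
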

To prove the theorem, 
	we need the following two lemmas 
	which will be proved in Section \ref{Sect_Lemma1} and \ref{Sect_Lemma2}.
\begin{lemma}\label{4.Lemma1}
	Let $T$ and $S$ denote the matrices
		$\begin{pmatrix} 1  &  1 \\ 0  &  1 \end{pmatrix}$
		and
		$\begin{pmatrix} 0  &  -1 \\1  &  0 \end{pmatrix}$
		respectively.  
	If $V \in \{  T^{\pm}, S \}$, then (\ref{4.theorem1a}) holds.
\end{lemma}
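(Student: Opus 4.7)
My plan is to handle the cases $V = T^{\pm 1}$ and $V = S$ separately, the former by direct verification and the latter via a residue calculation.

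For $V = T^{\pm 1}$ we have $c = 0$, so $R_V(l, \alpha, M; \tau) = 0$ and $j(V;\alpha) = 1$; therefore (\ref{4.theorem1a}) reduces to the invariance $\HH(l, \alpha, M; \tau) = \HH(l, V\alpha, VM; \tau)$. With $V\alpha = \alpha \pm 1$ and $VM = \bigl(\begin{smallmatrix} \vec{x} \pm \vec{y} \\ \vec{y}\end{smallmatrix}\bigr)$, I substitute into Definition \ref{2.def_H} and apply the quasi-periodicity (\ref{2.key_formula1}) iteratively to obtain
$$\F(-\vec{y}; (\alpha \pm 1)(\tau m' + m); \tau) = \e(\mp m'y' \mp my)\, \F(-\vec{y}; \alpha(\tau m' + m); \tau);$$
this phase exactly cancels the additional factor $\e(\pm m'y' \pm my)$ produced by the shift $\vec{x} \mapsto \vec{x} \pm \vec{y}$ in the exponential of Definition \ref{2.def_H}, recovering the original series.

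The case $V = S$ is the substantive one, with $c = 1,\ d = 0,\ j(S;\alpha) = \alpha$. My plan is to apply the residue theorem to the meromorphic function
$$g(z) := \frac{\F(\vec{y}; -\alpha z; \tau)\, \F(\vec{x}; z; \tau)}{z^l},$$
whose poles lie in $\{0\} \cup \Lambda \cup (\Lambda/\alpha)$, where $\Lambda := \mathbb{Z} + \tau\mathbb{Z}$; because $\alpha$ is irrational the two lattices intersect only at $0$. Three residue computations are needed. At $z = 0$, multiplying the Laurent expansions from (\ref{3.definition_B}) and using the symmetry $B_k(-\vec{x}; \tau) = (-1)^k B_k(\vec{x}; \tau)$, the coefficient of $z^{l-1}$ in $\F(\vec{y}; -\alpha z; \tau) \F(\vec{x}; z; \tau)$ works out to
$$\frac{(2\pi i)^{l+1}}{(l+1)!} \sum_{k=0}^{l+1} \binom{l+1}{k} (-\alpha)^{k-1} B_k(\vec{y}; \tau) B_{l+1-k}(\vec{x}; \tau),$$
which, after reindexing and recognising $S_{k+1, l-k}\bigl(0, \bigl(\begin{smallmatrix}-\vec{x}\\ \vec{y}\end{smallmatrix}\bigr); \tau\bigr) = B_{k+1}(\vec{y}; \tau) B_{l-k}(\vec{x}; \tau)$ via $d(0) = 1$, $n(0) = 0$, equals $R_S(l, \alpha, M; \tau)$. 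At each $z = \omega = n + n'\tau \in \Lambda \setminus \{0\}$, iterated application of (\ref{2.key_formula1}) starting from $\operatorname{Res}_{X=0}\F = 1$ gives $\operatorname{Res}_{X = \omega}\F(\vec{x}; X; \tau) = \e(nx + n'x')$, and combined with $\F(\vec{y}; -X; \tau) = -\F(-\vec{y}; X; \tau)$ from (\ref{2.key_formula2}) the sum over these residues yields $-\HH(l, \alpha, M; \tau)$. At each $z = \omega/\alpha$ with $\omega \in \Lambda \setminus \{0\}$, an analogous computation, carrying the extra chain-rule factor $-1/\alpha$ from the substitution $X = -\alpha z$, assembles $\alpha^{l-1} \HH(l, -1/\alpha, SM; \tau)$. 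Thus (\ref{4.theorem1a}) for $V = S$ is equivalent to the assertion that the sum of all residues of $g$ vanishes.

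To verify the vanishing I would integrate $g$ over contours $C_N$ expanding to infinity. Since $\vec{x}, \vec{y}$ are real, the multipliers in (\ref{2.key_formula1}) have modulus $1$, so $|\F(\vec{x}; z; \tau)|$ and $|\F(\vec{y}; -\alpha z; \tau)|$ are strictly doubly periodic in $z$ and hence bounded away from their respective pole lattices. The main technical obstacle is that $\Lambda$ and $\Lambda/\alpha$ are incommensurate for irrational $\alpha$, so no single fundamental parallelogram accommodates both. I would take $C_N$ to be the boundary of a rectangle of dimensions $O(N)$, perturbed by $O(1)$ so as to stay at distance at least $\epsilon > 0$ from every point of $\Lambda \cup (\Lambda/\alpha)$; such a perturbation exists because each lattice contributes only $O(N)$ points within distance $1$ of $C_N$, so the measure of bad perturbations is bounded. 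On such $C_N$ both $\F$-factors are then bounded, giving $|g(z)| = O(|z|^{-l})$ while $|C_N| = O(N)$; for $l \geq 3$ the contour integral is $O(N^{1-l}) \to 0$, which closes the argument.
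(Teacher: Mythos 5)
Your bookkeeping is correct and your route is genuinely different from the paper's. The $T^{\pm}$ case coincides with the paper's (quasi-periodicity (\ref{2.key_formula1}) cancelling the phase from $\vec{x}\mapsto\vec{x}\pm\vec{y}$). For $V=S$, your residue computations check out: the residue of $g$ at $0$ is $R_S(l,\alpha,M;\tau)$, the residues over $\Lambda\setminus\{0\}$ sum to $-\HH(l,\alpha,M;\tau)$, and those over $(\Lambda/\alpha)\setminus\{0\}$ sum to $\alpha^{l-1}\HH(l,-1/\alpha,SM;\tau)$ (the absolute convergence of these residue sums is exactly Lemma \ref{3.Lemma1}, which is where the algebraicity of $\alpha$ enters). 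The paper instead expands both $\F$-factors as Eisenstein--Kronecker lattice series, applies a partial-fraction identity termwise, and justifies the interchange of the two lattice summations (Proposition \ref{5.Proposition1}) by a Siegel-type Abel summation plus Thue--Siegel--Roth (Lemma \ref{5.Lemma1}); that argument needs $l\geq 4$, and $l=3$ is recovered by applying $\tau\partial_{x'}+\partial_x$ to the $l=4$ identity. Your approach, if completed, would treat $l=3$ directly and would replace the summation-interchange analysis by a contour estimate.

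The gap is precisely in that contour estimate, and it is the analytic crux of the lemma. You claim an $O(1)$ perturbation of a rectangle of size $O(N)$ can be chosen at distance at least a \emph{fixed} $\epsilon>0$ from every point of $\Lambda\cup(\Lambda/\alpha)$, ``because each lattice contributes only $O(N)$ points within distance $1$ of $C_N$, so the measure of bad perturbations is bounded.'' That first-moment argument does not give what you need: the set of perturbation parameters bringing $C_N$ within $\epsilon$ of one of those $O(N)$ points has measure $O(N\epsilon)$, and with an $O(1)$ parameter budget this forces $\epsilon\lesssim 1/N$. With only $\mathrm{dist}(C_N,\Lambda\cup(\Lambda/\alpha))\gtrsim 1/N$, at places where a pole of each factor lies within $O(1/N)$ of the contour (and since $\alpha$ is irrational, points of $\Lambda$ and of $\Lambda/\alpha$ do come arbitrarily close together, so such coincidences occur) one only gets $\abs{g(z)}=O(N^{2}\cdot N^{-l})$, and the sup-norm bound yields $O(N^{3-l})$ for the integral --- which does not tend to $0$ at $l=3$. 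The conclusion you want (contours staying at a fixed distance from both lattices) is in fact attainable, but not by this argument: one must either build the contour explicitly, e.g.\ routing it through the corridor at distance $\geq c(\Lambda)$ from $\Lambda$ and weaving around the $O(1)$ points of $\Lambda/\alpha$ per unit length inside that corridor, or else abandon the sup-norm bound and estimate $\int_{C_N}\abs{g}\,\abs{dz}$ directly using that only $O(N)$ poles lie near $C_N$ and that each contributes at most a logarithmic (or, for close pairs, an $O(N\log N)$) local integral. As written, the step ``such a perturbation exists'' is unjustified, and it is exactly the point at which the two lattices' incommensurability --- the difficulty the paper spends Section \ref{Sect_Lemma1} overcoming --- reappears in your approach.
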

\begin{lemma}\label{4.Lemma2}
	If $l \in \mathbb{Z}_{>0}$, 
		$z \in \mathbb{C}$,
		$V, V_1, V_2 \in \mathrm{SL}_2 (\mathbb{Z})$ with $V = V_2 V_1$,
		and $M \in \mathrm{M}_2(V) \bigcap \mathrm{M}_2(V_1)$, then  
	\begin{equation}\label{4.lemma2a}
		R_{V_1} (l, z, M; \tau)
			+ j(V_1, z)^{l-1} R_{V_2} (l, V_1 z, V_1 M; \tau)
		=
		R_V (l, z, M; \tau).
	\end{equation}
\end{lemma}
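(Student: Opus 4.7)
The plan is to prove the cocycle identity (\ref{4.lemma2a}) by a generating-function reformulation, reducing it to a reciprocity/distribution identity among the refined elliptic Dedekind-Rademacher sums of \cite{machide}.

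\textbf{Reduction to positive lower-left entries.} First I would dispose of the degenerate and sign cases via the piecewise definition of $R_V$. If $c_1=0$ then $V_1=\pm T^k$ for some $k\in\mathbb{Z}$, so $R_{V_1}=0$, $j(V_1;z)=\pm 1$, and the bottom row of $V_2V_1$ equals $\pm$ that of $V_2$; using the normalization $R_{-V}=R_V$ the identity reduces to the equality $R_V=R_{V_2}(l,z+k,T^kM;\tau)$, in which the shift by $k\vec{y}$ in the argument of the inner Bernoulli function is absorbed by the integer-periodicity (\ref{2.key_formula3}). The case $c_2=0$ is symmetric. The cases $c_i<0$ reduce to $c_i>0$ by replacing $V_i$ by $-V_i$ and using $R_{-V}=R_V$ together with $j(-V;z)=-j(V;z)$ and $(-V)z=Vz$.

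\textbf{Generating function.} In the remaining case $c_1,c_2,c>0$, introduce
\[
F_V(z,X;M,\tau) := \frac{1}{c}\sum_{j',j\,\bmod\,c} \F\!\left(\tfrac{\vec{j}+\vec{y}}{c};\,-j(V;z)X;\,\tau\right)\F\!\left(d\,\tfrac{\vec{j}+\vec{y}}{c}+\vec{x};\,X;\,\tau\right).
\]
Expanding both factors via (\ref{3.definition_B}) and collecting the diagonal $m+n=l+1$ of the resulting double Laurent series shows that $R_V(l,z,M;\tau)$ is precisely the coefficient of $X^{l-1}$ in $F_V(z,X;M,\tau)$. Combining the elementary operator identity $j^{l-1}\,[X^{l-1}]g(X)=[X^{l-1}]g(jX)$ with the chain rule $j(V;z)=j(V_2;V_1z)\,j(V_1;z)$, the family of cocycle relations (\ref{4.lemma2a}) (one for each $l$) becomes equivalent to the single generating-function identity
\[
F_{V_1}(z,X;M,\tau)+F_{V_2}\!\bigl(V_1z,\,j(V_1;z)X;\,V_1M,\,\tau\bigr)=F_V(z,X;M,\tau).
\]

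\textbf{Residue-sum manipulation and main obstacle.} Substituting $V_1M=\bigl(\begin{smallmatrix}a_1\vec{x}+b_1\vec{y}\\ c_1\vec{x}+d_1\vec{y}\end{smallmatrix}\bigr)$ into the middle term, the identity becomes an equality among three residue-averaged sums of products of $\F$'s, indexed respectively by $\mathbb{Z}/c_1$, $\mathbb{Z}/c_2$, and $\mathbb{Z}/c$. After extracting $[X^{l-1}]$, this translates — via (\ref{2.relation}) — into a reciprocity identity among the refined sums $S_{m,n}^{\tau}$ of \cite{machide}. The main obstacle lies precisely here: one must establish the correct reindexing bijection that glues residue classes modulo $c_1$ and modulo $c_2$ into a complete set of representatives modulo $c=c_2a_1+d_2c_1$, and then verify — using the quasi-periodicities (\ref{2.key_formula1}) and (\ref{2.key_formula2}) of $\F$ — that the shift phase factors arising in the reindexing cancel identically. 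This is the elliptic lift of the classical cocycle property of Dedekind-Rademacher sums, and its detailed execution is deferred to Section \ref{Sect_Lemma2}.
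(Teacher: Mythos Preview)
Your generating-function reformulation is correct as far as it goes: $R_V(l,z,M;\tau)$ is the coefficient of $X^{l-1}$ in your $F_V$, and the family of identities (\ref{4.lemma2a}) is equivalent to the single identity $F_{V_1}(z,X;M)+F_{V_2}(V_1z,j(V_1;z)X;V_1M)=F_V(z,X;M)$. But you then defer the proof of this identity, and that deferred step \emph{is} the lemma. Moreover, your description of the missing step as a ``reindexing bijection that glues residue classes modulo $c_1$ and modulo $c_2$ into a complete set modulo $c$'' is misleading: $c=c_2a_1+d_2c_1$ bears no simple combinatorial relation to $c_1,c_2$ (for $V_1=V_2=S$ one has $c_1=c_2=1$ but $c=0$), so no naive reindexing can work. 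Your case analysis is also incomplete for the same reason: you reduce to $c_1,c_2>0$ but never address the possibility $c=0$ with $c_1,c_2\neq0$, which does occur and must be handled.

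The paper's route is entirely different and sidesteps the obstacle. It first proves an auxiliary transformation formula (Theorem~\ref{6.Theorem1}) for the elliptic Dedekind sum $S_{1,l}(r,M;\tau)$ at a \emph{rational} argument $r$:
\[
S_{1,l}(r,M;\tau)-j(V;r)^{l-1}S_{1,l}(Vr,VM;\tau)
=\tfrac{(-1)^l l!}{(2\pi i)^{l+1}}R_V(l,r,M;\tau)+(\text{a }B_{l+1}\text{ correction}).
\]
Writing this down for each of $V,V_1,V_2$ and using $j(V;r)=j(V_1;r)\,j(V_2;V_1r)$, the $S_{1,l}$'s telescope away, leaving (\ref{4.lemma2a}) at $z=r$ plus $B_{l+1}$ correction terms. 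These corrections cancel by the elementary identity $\tilde{j}(V;r)=\tilde{j}(V_1;r)+j(V_1;r)^{-2}\tilde{j}(V_2;V_1r)$ (the logarithmic derivative of the $j$-cocycle). Since $R_V(l,z,M;\tau)$ is a polynomial in $z$, validity on $\mathbb{Q}$ implies validity on $\mathbb{C}$. Theorem~\ref{6.Theorem1} itself is proved (Proposition~\ref{6.Proposition1} and Lemma~\ref{6.Lemma3}) by a Liouville argument in an auxiliary complex variable: the difference of the two sides is doubly quasi-periodic, shown to be entire by cancelling all candidate poles via the distribution relation (\ref{6.lemma2a}), hence constant, hence zero. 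That Liouville step is the real engine, and it replaces the direct residue-reindexing you are proposing.
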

We prove Theorem \ref{4.Theorem1}.
\begin{proof}[Proof of Theorem \ref{4.Theorem1}]
	Put 
		$G = \{  T^{\pm}, S \}$.
	For any positive integer $n$,
		let $U_n$ be a subset of $\mathrm{SL}_2 (\mathbb{Z})$ defined by
	\begin{equation*}
		U_n
		:=
		\{ V \in \mathrm{SL}_2 (\mathbb{Z}) \vert 
			\text{there are } n \text{ matrices } V_1, \ldots ,V_n \in G \text{ with } V = V_n V_{n-1} \cdots V_1
		\}.
	\end{equation*}
	If $V$ is a matrix in $\mathrm{SL}_2 (\mathbb{Z})$,
 		there is a positive integer $n$
		with  $V \in U_n$ since $G$ generates $\text{SL}_2 (\mathbb{Z}$) (see \cite[Theorem 2.1]{apostol1}).
	We will prove (\ref{4.theorem1a}) by induction on $n$.
	If $n$ equals $1$, the claim has been shown in Lemma \ref{4.Lemma1}.
	Suppose that it is true in case of $n-1$.
	Let $V$ be a matrix in $U_n$.
	Then there are two matrices $V_1$ and $V_2$ such that
		$V_1 \in G, V_2 \in U_{n-1}$ and $V = V_2 V_1$.
	By virtue of $j(V;\alpha) = j(V_1;\alpha)j(V_2;V_1\alpha)$,
	\begin{eqnarray*}
		& & 
		\HH (l, \alpha, M; \tau)
		-
 		j(V;\alpha)^{l-1}
 		\HH (l, V\alpha, VM; \tau) \\
 		& = & 
		\HH (l, \alpha, M; \tau)
		-
 		j(V_1;\alpha)^{l-1}
		\HH (l, V_1 \alpha, V_1 M; \tau) \\
		& & + 
		j(V_1;\alpha)^{l-1}
 		\Bigl(
			\HH (l, V_1 \alpha, V_1 M; \tau)
		-
 		j(V_2;V_1\alpha)^{l-1}
			\HH (l, V_2 V_1 \alpha, V_2  V_1 M; \tau))
		\Bigr) \\
		& \stackrel{\text{(induction)}}{=} & 
			R_{V_1} (l, \alpha, M; \tau) +  j(V_1, \alpha)^{l-1} R_{V_2} (l, V_1 \alpha, V_1 M; \tau)
		\\
		& \stackrel{(\text{\ref{4.lemma2a}})}{=}  & 
		 R_V (l, \alpha, M; \tau)
	\end{eqnarray*}
		which gives (\ref{4.theorem1a}) when $M \in \mathrm{M}_2(V) \bigcap \mathrm{M}_2(V_1)$.
	The set $\mathrm{M}_2(V) \bigcap \mathrm{M}_2(V_1)$ is dense in $\mathrm{M}_2 (\mathbb{R})$,
		so we obtain (\ref{4.theorem1a})
		with $M \in \mathrm{M}_2(V)$, which completes the proof.
\end{proof}

\section{A part of Arakawa's transformation formulae and Berndt's Theorem \ref{1.Berndt_Theorem} (ii)} \label{Sect_BerndtArakawaResults}
In this section, by use of Theorem \ref{4.Theorem1} we verify 
	a part of Arakawa's transformation formulae,
	or (\ref{1.Arakawa_Equation}) with integers $s < -1$.
Berndt's Theorem \ref{1.Berndt_Theorem} (ii) is induced from \cite[Proposition 2.2]{arakawa2}
	which is derived from Arakawa's transformation formulae with integers $s < -1$,
	thus we obtain Berndt's Theorem \ref{1.Berndt_Theorem} (ii) as a corollary.

Let $\mySin(x)$ and $\myCos(x)$ be $\sin(2 \pi x)$ and $\cos (2 \pi x)$ respectively.
We firstly introduce the behaviors of the real parts 
	of 
	$\dfrac{(l+1)!}{(2\pi i)^{l+1}} \HH (s,\alpha, M; \tau)$ 
	and 
	$S_{m,n}(r, x', x, y', y; \tau)$ when $\tau \rightarrow i \infty$.
To do this, we need the functions
	\begin{equation*}
		\begin{split}
			& Cl_l(x) 
			:= 
			\begin{cases} 
				\displaystyle{}
				\sum_{m = 1}^{\infty} \frac{\mySin(m x)}{m^l} & ( l \ even),  \\
				\displaystyle{}
				\sum_{m = 1}^{\infty} \frac{\myCos (m x)}{m^l} & ( l \ odd), 
			\end{cases}
			\\
			& S_{m,n}(r, x, y)
			:=
			\sum_{j (   d(r) )} 
			\B_m \Bigl(\frac{j + y}{d(r)} \Bigr)
			\B_n \Bigl(n(r) \frac{j + y}{d(r)} - x \Bigr), \\
			& \mathfrak{C}(r, x, y)
			:= 
			\frac{1}{c} \sideset{}{^\prime}\sum_{j(   c)}
				\cot \Bigl(\pi (\frac{j + y}{d(r)}) \Bigr) \cot \Bigl(\pi  (n(r) \frac{j + y}{d(r)} - x ) \Bigr),
		\end{split}
	\end{equation*}
	where the prime of the last summation means excluding $j$ 
	such that $a\frac{j+z}{c}-x \in \mathbb Z$ or $b\frac{j+z}{c}-y \in \mathbb Z$.
We note that 
	$Cl_l(x)$ are Clausen functions (see \cite{Lewin1}),
	and $S_{m,n}(r, x, y)$ and $\mathfrak{C}(r, x, y)$ respectively are 
	the generalized Dedekind-Rademacher sums in \cite{HWZ}
	and the cotangent sum in \cite{dieter1}:
	\begin{equation*}
	\begin{split}
		S_{m,n}(r, x, y) &= S_{m,n} \bigl( \begin{smallmatrix} 1 & n(r) & d(r) \\ 0 & x & y \end{smallmatrix} \bigr),\\
		\mathfrak{C}(r, x, y) &= \mathfrak{c}(1, n(r), d(r) ;0, x, y).
	\end{split}
	\end{equation*}
The behaviors of the real parts are following:
\begin{proposition}\label{DegenerationProperties}
	Let $ \mathrm{Re}\  z $ denote  the real part of a complex number $z$.\\
	{\bf (i)}
	Let $\alpha$ be an irrational real algebraic number,
		$l$ an integer with $l > 2$, 
	and $x', x, y', y$ real numbers with $(y', y) \notin \mathbb{Z}^2$.
	Then we have
	\begin{multline} \label{3.proposition1ii}
		\mathrm{Re} \Bigl( \frac{(l+1)!}{(2 \pi i)^{l+1}}  \lim_{\tau \rightarrow i \infty} \HH (l, \alpha, x', x, y', y; \tau) \Bigr) \\
		=
		\begin{cases}
			\displaystyle \frac{(l+1)!}{(2 \pi i)^{l}} \Bigl( \HH (l, \alpha, x, y) - \chi (y) Cl_l(x) \Bigr) &(l\ odd), \\
			\displaystyle \frac{(l+1)!}{(2 \pi i)^{l}} \Bigl( \HH (l, \alpha, x, y) - i \chi (y) Cl_l(x) \Bigr)  & (l\  even).
		\end{cases}
	\end{multline} 
	{\bf (ii)}
	Let $r$ be a rational number $r \neq 0$ and 
		$x', x, y', y$ real numbers with $\vec{x}, n(r) \vec{x} - d(r) \vec{y} \notin \mathbb{Z}^2$.
	Then we have
	\begin{multline} \label{2.proposition1ii}
		 \mathrm{Re} \lim_{\tau \rightarrow i \infty} S_{m,n}(r, x', x, y', y; \tau)\\
		=
		\begin{cases}
			S_{m,n}(r, x, y) - \frac{1}{4} \mathfrak{C}(r, x, y) & (m= n=1, y, n(r) y - d(r) x \in \mathbb{Z}), \\
			S_{m,n}(r, x, y) & (otherwise).
		\end{cases}
	\end{multline}
\end{proposition}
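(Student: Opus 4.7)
The plan is to compute both limits $\tau\to i\infty$ directly, using the product expansion (2.5) for $\theta$ and the Fourier expansion (3.14) for $B_m$.

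For (i), I first split $\HH(l,\alpha,\vec{x},\vec{y};\tau)$ into the pieces with $m'=0$ and with $m'\neq 0$. Writing $\alpha(\tau m'+m)=\xi'\tau+\xi$ with $|\xi'|,|\xi|\le 1/2$ via quasi-periodicity (2.8), the theta asymptotic $\theta(z;\tau)\sim iq^{1/8}(\e(z/2)-\e(-z/2))$ shows that $|\F(-\vec{y};\xi'\tau+\xi;\tau)|$ has exponential order $e^{c(\xi')\mathrm{Im}(\tau)}$ with $c(\xi')\le 0$ and $c(0)=0$ only, so combined with $|\tau m'+m|^{-l}\le|\mathrm{Im}(\tau)|^{-l}|m'|^{-l}$ the $m'\neq 0$ part vanishes in the limit. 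For the $m'=0$ part $\sum_{m\neq 0}\e(mx)/m^l\cdot\F(-\vec{y};\alpha m;\tau)$, the same asymptotic with $X=\alpha m$ real splits into two cases. When $y\notin\mathbb{Z}$, the exponential divergences of $\theta(y'-y\tau;\tau)$ and $\theta(y'-y\tau+X;\tau)$ cancel to give $\lim\F=2\pi i\,\e(X\langle-y\rangle)/(\e(X)-1)$, and combining $m>0$ and $m<0$ terms via $\langle-y\rangle+\langle y\rangle=1$ yields $\lim\HH=2\pi i\,\HH(l,\alpha,x,y)$. When $y\in\mathbb{Z}$ (WLOG $y=0$, $y'\notin\mathbb{Z}$), the denominator $\theta(y';\tau)$ is regular and a short calculation gives $\lim\F=\pi(\cot(\pi X)+\cot(\pi y'))$, whence $\lim\HH=I_1+I_2$ with $I_1=\pi\sum\e(mx)\cot(\pi\alpha m)/m^l$ and $I_2=\pi\cot(\pi y')\sum\e(mx)/m^l$.

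The key identity is that, after pairing $m$ with $-m$ in $I_1$ and using $\cot(\pi\alpha m)=i+2i/(\e(\alpha m)-1)$, one obtains $I_1=2\pi i\,(\HH(l,\alpha,x,0)-\chi(y)Cl_l(x))$ for $l$ odd and $I_1=2\pi i\,(\HH(l,\alpha,x,0)-i\chi(y)Cl_l(x))$ for $l$ even. Thus multiplying $\lim\HH$ by $(l+1)!/(2\pi i)^{l+1}$ produces $A_l\cdot(\HH(l,\alpha,x,0)-\text{correction})$ plus an extra term proportional to $A_l\cot(\pi y')\sum_{m\neq 0}\e(mx)/m^l/(2i)$; a direct check shows the latter is purely imaginary in both parities of $l$ (the three imaginary factors $A_l$, $1/(2i)$, and $\sum\e(mx)/m^l$ combine appropriately), so it drops upon taking real parts, giving exactly the right-hand side of the proposition. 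For $y\notin\mathbb{Z}$, the conjugation identity $\overline{\HH(l,\alpha,x,y)}=(-1)^l\,\HH(l,\alpha,x,y)$, verified directly on the defining series using $\langle-y\rangle+\langle y\rangle=1$, shows $A_l\HH(l,\alpha,x,y)$ is already real and hence equals its own real part.

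For (ii), $S_{m,n}$ is a finite sum so I only need $\lim B_m(\vec{u};\tau)$ from (3.14). As $q\to 0$ the two geometric-series summands vanish term by term, and the third summand $x^{m-1}\e(-x'+x\tau)/(\e(-x'+x\tau)-1)$ vanishes unless $m=1$ and the second component $u=x$ is an integer (then forcing $u'\notin\mathbb{Z}$ by the hypothesis $\vec{u}\notin\mathbb{Z}^2$), in which case it gives $(i/2)\cot(\pi u')$. Hence $\lim B_m(\vec{u};\tau)=\B_m(u)$ except when $m=1,u\in\mathbb{Z}$, where it equals the purely imaginary $(i/2)\cot(\pi u')$. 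Consequently the real part of $\lim B_m B_n$ matches $\B_m(u)\B_n(v)$ (with the Hall-Wilson-Zagier convention $\B_1(k)=0$ for $k\in\mathbb{Z}$) in every case except $m=n=1$ with $u,v\in\mathbb{Z}$ simultaneously, where the product of two purely imaginary quantities yields the real $-(1/4)\cot(\pi u')\cot(\pi v')$. Summing over $(j',j)\in(\mathbb{Z}/d(r))^2$ and dividing by $d(r)$, such simultaneously-integer $(u,v)$ exist precisely under the hypothesis $y,n(r)y-d(r)x\in\mathbb{Z}$, and identifying the resulting cotangent sum with $\mathfrak{C}$ produces the $-(1/4)\mathfrak{C}$ correction.

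The main technical obstacle is the $y\in\mathbb{Z}$ subcase of (i), where three theta factors have logarithms of modulus that can diverge independently as $\tau\to i\infty$ and one must check the exact cancellation leading to the cotangent formula for $\lim\F$. A second delicate point is tracking the parity of $l$ in the normalization by $(2\pi i)^{l+1}$: this factor alternates between real and imaginary with $l$, and it is the complementary parity of $\HH$ under conjugation, together with the appearance of $Cl_l$ versus $i\,Cl_l$ in the statement, that makes both sides visibly real.
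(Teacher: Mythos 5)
Your overall strategy for part (i) is the same as the paper's: show the $m'\neq 0$ terms die in the limit, evaluate $\lim_{\tau\to i\infty}\F$ on the surviving $m'=0$ terms as a cotangent expression, rewrite the result against the defining series of $\HH(l,\alpha,x,y)$ according to the parity of $l$ and to $y\in\mathbb{Z}$ versus $y\notin\mathbb{Z}$, and discard the $\cot\pi y'$ contribution as purely imaginary after normalization (your parity bookkeeping and the conjugation symmetry $\overline{\HH(l,\alpha,x,y)}=(-1)^l\HH(l,\alpha,x,y)$ both check out). The one methodological difference in (i) is how you compute $\lim\F$: you argue from the theta product asymptotics, whereas the paper invokes the Kronecker--Zagier Fourier expansion of $F(\xi,X;\tau)$, which delivers $\pi(\cot\pi\xi+\cot\pi X)$ plus an explicitly exponentially small double sum in one step (\ref{3.proposition1aa})--(\ref{3.proposition1bb}); the paper's route avoids the three-way cancellation of divergent theta factors that you flag as delicate. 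For part (ii) you are genuinely more self-contained: the paper's proof is a citation to \cite{machide}, while you compute $\lim_{\tau\to i\infty}B_m(\vec{u};\tau)$ termwise from (\ref{Sect3.FouExp}), correctly obtaining $\B_m(u)$ except for $m=1$, $u\in\mathbb{Z}$, where the limit is the purely imaginary $\tfrac{i}{2}\cot\pi u'$; this is the right mechanism for the $-\tfrac14\mathfrak{C}$ term.

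Two points need repair. First, your dismissal of the $m'\neq0$ part is too quick: after reducing $\alpha(\tau m'+m)$ to $\ffrp{\alpha m'}\tau+\ffrp{\alpha m}$, the factor $\F$ is \emph{not} of uniformly negative exponential order -- near its pole it is only $O\bigl(1/\abs{\ffrp{\alpha m'}\tau+\ffrp{\alpha m}}\bigr)$, and $\ffrp{\alpha m'}$ can be arbitrarily small as $m'$ varies. To interchange $\lim_{\tau\to i\infty}$ with the double sum one needs a $\tau$-uniform dominating series, which requires the inequalities (\ref{3.lemma1aa}) together with the Thue--Siegel--Roth/Arakawa estimate on $1/\abs{\ffrp{\alpha m'}}$ from the proof of Lemma \ref{3.Lemma1}; this is exactly what the paper supplies to justify (\ref{3.proposition1dd}), and your bound $\abs{\tau m'+m}^{-l}\le\abs{\mathrm{Im}\,\tau}^{-l}\abs{m'}^{-l}$ alone does not close this. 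Second, in (ii) your own criterion for the appearance of the correction -- some $j$ making \emph{both} $\frac{j+y}{d(r)}$ and $n(r)\frac{j+y}{d(r)}-x$ integral -- is equivalent to $x,y\in\mathbb{Z}$, which is strictly stronger than the displayed condition $y,\,n(r)y-d(r)x\in\mathbb{Z}$ when $d(r)>1$: for $r=1/3$, $y=0$, $x=1/3$ the displayed condition holds, yet no such $j$ exists and a direct computation gives $\mathrm{Re}\lim S_{1,1}=S_{1,1}(r,x,y)$ with no $\mathfrak{C}$-term (while $\mathfrak{C}(r,x,y)\neq0$). So your computation is sound, but the sentence identifying its outcome with the stated condition is not; note that your criterion does agree with the paper's intermediate condition $(0,x,y)\in(1,n(r),d(r))\mathbb{R}+\mathbb{Z}^3$, so the discrepancy should be flagged rather than silently absorbed.
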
	
\begin{proof}
	In order to prove (i),
		we introduce the Fourier expansion for the Jacobi form (see \cite[p. 70]{weil} or \cite[p. 456]{zagier1})
		\begin{equation*}
			F(\xi,X;\tau) := \frac{\theta '(0;\tau) \theta(\xi + X;\tau)}{\theta(\xi ;\tau) \theta(X;\tau)}.
		\end{equation*}
	If $\abs{\mathrm{Im}\ \xi}, \abs{\mathrm{Im}\ X} < \abs{\mathrm{Im}\ \tau}$, then
		\begin{equation*}
			F(\xi,X;\tau)
			=
			\pi \bigl( \cot \pi \xi + \cot \pi X \bigr)
				-2\pi i \sum_{i,j=1}^{\infty} (\e(i\xi+jX)- \e(-i\xi-jX)) \e(ij\tau).
		\end{equation*}
	Let $\xi', \xi, X'$ and $X$ be real numbers with $ \abs{X'} \leq 1/2$ and $ \abs{\xi} < 1$. 
	We find from the Fourier expansion that
	\begin{eqnarray}\label{3.proposition1aa}
		& &
		\F (\xi',\xi; X'\tau +X; \tau)\nonumber \\ 
		& = & 
		\e(\xi (X'\tau +X) ) F(-\xi' + \xi \tau, X'\tau +X ;\tau) \nonumber \\
		& = & 
		\pi  \e(\xi (X' \tau +X)) \Bigl( \cot \pi(-\xi'+\xi \tau) + \cot \pi(X+ X'\tau ) \Bigr)  \\
		& & 
			-2 \pi i \e(\xi X) \sum_{i,j=1}^{\infty} \Bigl( \e(-i\xi'+jX) \e((i + X')(j +\xi)\tau)  \nonumber \\
		& & \qquad \qquad \qquad \qquad \qquad
			- \e(i\xi'-jX) \e((i - X')(j - \xi)\tau) \Bigr).\nonumber
	\end{eqnarray}
	If $c = \min \{1 - \abs{\xi}, 1/2 \}$, 
		then it follows from $j \pm \xi \geq c j$ and $i \pm X' \geq ci$ $(i, j \in \mathbb{Z})$ that
		\begin{multline} \label{3.proposition1bb}
			\biggl{\lvert}\sum_{i,j=1}^{\infty} \Bigl( \e(-i\xi'+jX) \e((i + X')(j +\xi)\tau)\\  
				- \e(i\xi'-jX) \e((i - X')(j - \xi)\tau) \Bigr) \biggr{\rvert} 
			\leq 
			2  \sum_{i,j=1}^{\infty} \e (c^2 ij \tau).
		\end{multline}
	Let the symbols $\iinp{\xi}$ and $\langle \langle \xi \rangle \rangle$ be as in the proof of Lemma \ref{3.Lemma1},
		and $\{ x \}$ as in Section \ref{Sect_Introduction}.
	For any integer $m' \in \mathbb{Z}$,
	let $y_0 = \frp{-y}$ if $m = 0$,
		and let $y_{m'}$ be the real number such that
		$y_{m'} \equiv -y \pmod{1}$, 
		$\abs{y_{m'}} <1$ and $y_{m'} \ffrp{\alpha m'} \geq 0$ otherwise.
	It is derived from (\ref{2.key_formula1}), (\ref{2.key_formula2}) and
		$\alpha m' = \iinp{\alpha m'} + \ffrp{\alpha m'}$ that
		\begin{equation*}
			\F (-\vec{y}; \alpha (\tau m' +m); \tau)
			=  
			\e(-\iinp{\alpha m'} y') \F (-y',y_{m'};  \ffrp{\alpha m'} \tau + \alpha m; \tau),
		\end{equation*}
		thus we find from (\ref{3.proposition1aa}) and (\ref{3.proposition1bb}) that
		\begin{eqnarray} \label{3.proposition1cc}
			& & \lim_{\tau \rightarrow i \infty} \HH (l, \alpha, x', x, y', y; \tau) \nonumber \\
			& = & 
			\lim_{\tau \rightarrow i \infty} \pi
				\sideset{}{^\prime}\sum\limits_{m', m}
				\dfrac{\e (\vec{m} \cdot \vec{x})}{(\tau m' +m)^{l}} \e(-\iinp{\alpha m'} y') 
				\e(y_{m'} ( \ffrp{\alpha m'} \tau +\alpha m)) \\
			& & \qquad \qquad \times
				\Bigl(	 \cot \pi(y'+y_{m'} \tau) + \cot \pi(\alpha m+ \ffrp{\alpha m'} \tau )  \Bigr) \nonumber \\
			& \bigl( = &
			\lim_{\tau \rightarrow i \infty} \pi
				\sideset{}{^\prime}\sum\limits_{m', m} S_{m',m} \bigr) \nonumber
		\end{eqnarray}
		where $S_{m',m}$ are the summands in the summation.
	Let $c$ be a positive real number such that 
		$\abs{z \cot \pi z} \leq c$
		for any complex number $z$ with $\abs{\mathrm{Re}\ z}, \abs{\mathrm{Im}\  z} \leq 1/2$.
	Since $\cot \pi z = i (\e(z) + 1)/(\e(z)-1)$ converges at $\pm i$ when $z$ tends to $\mp i \infty$,
		there is a positive real number $c'$ such that
		\begin{equation*} \abs{ \cot \pi z} \leq \frac{c}{\abs{z}} + c' \end{equation*}
		for $z \in \mathbb{C} \setminus \{ 0 \}$  with $\abs{\mathrm{Re}\ z} \leq 1/2$.
	Set $C = \max \{ c, c'\}$.
	Since $y_{m'} \ffrp{\alpha m'}  \geq 0$, one obtains 
		\begin{equation*}
			\sideset{}{^\prime}\sum_{m', m \atop (m' \neq 0)} \abs{S_{m',m}}
			\leq
			C \sideset{}{^\prime}\sum_{m', m \atop (m' \neq 0)} \dfrac{1}{\abs{\tau m' +m}^{l}} 
			\Bigl( \frac{1}{\abs{\ffrp{y'}+ y_{m'} \tau}} + \frac{1}{\abs{\ffrp{\alpha m} +\ffrp{\alpha m'} \tau }} + 2 \Bigr) 
		\end{equation*}
		which together with (\ref{3.lemma1aa}) and $l > 2$ gives 
		\begin{equation}\label{3.proposition1dd}
			\lim\limits_{\tau \rightarrow i \infty} \sideset{}{^\prime}\sum_{m', m \atop (m' \neq 0)} S_{m',m} =0.
		\end{equation}
	Thus we have
		\begin{eqnarray*}
			& &  \lim_{\tau \rightarrow i \infty} \HH (l, \alpha, x', x, y', y; \tau) \\
			& = & 
			\pi \sideset{}{^\prime}\sum_m \lim_{\tau \rightarrow i \infty} 
				\frac{\e(mx)}{m^{l}} \e (m \frp{-y} \alpha)
				\Bigl( \cot \pi(y'+\frp{-y} \tau) + \cot \pi m \alpha \Bigr) \\
			& = & 
			\pi \times
			\begin{cases}
				\displaystyle
				\sideset{}{^\prime}\sum_m  \frac{\e(m x)}{m^{l}} \cot \pi m \alpha 
					+
					\cot \pi y' \sideset{}{^\prime}\sum_m \frac{\e(m x)}{m^{l}} ,
					& 
					\quad (y \in \mathbb{Z}), \\
				\displaystyle
				\sideset{}{^\prime}\sum_m  \frac{\e(m (x+ \{-y\} \alpha))}{m^{l}} \cot \pi m \alpha 
					-
					i \sideset{}{^\prime}\sum_m \frac{\e(x + \{ -y \} \alpha))}{m^{l}},
					& 
					\quad (y \notin \mathbb{Z}),
			\end{cases} \\
			& = & 
			2 \pi \times
			\begin{cases}
				\displaystyle
				\sum_{m = 1}^{\infty}  \frac{\myCos (m x)}{m^{l}} \cot \pi m \alpha 
					+
					i \cot \pi y' \sum_{m = 1}^{\infty} \frac{\mySin (m x)}{m^{l}},
					& 
					(y \in \mathbb{Z}, \ l\ odd), \\
				\displaystyle
				i \sum_{m = 1}^{\infty}  \frac{\mySin (m x)}{m^{l}} \cot \pi m \alpha 
					+
					\cot \pi y' \sum_{m = 1}^{\infty} \frac{\myCos (m x)}{m^{l}},
					& 
					(y \in \mathbb{Z}, \ l\ even), \\					
				\displaystyle
				\sum_{m = 1}^{\infty}  \frac{\myCos (m (x + \{ -y \} \alpha))}{m^{l}} \cot \pi m \alpha 
					+
					\sum_{m = 1}^{\infty} \frac{\mySin (m (x + \{ -y \} \alpha))}{m^{l}},
					& 
					(y \notin \mathbb{Z}, \ l\ odd), \\
				\displaystyle
				i \Bigl( \sum_{m = 1}^{\infty}  \frac{\mySin (m (x + \{ -y \} \alpha))}{m^{l}} \cot \pi m \alpha 
					-
					\sum_{m = 1}^{\infty} \frac{\myCos (m (x + \{ -y \} \alpha))}{m^{l}} \Bigr),
					& 
					(y \notin \mathbb{Z}, \ l\ even).
			\end{cases} 
		\end{eqnarray*}
	Set $\chi' (x) := 1 - \chi (x)$ where 
		$\chi (x)$ is the characteristic function of integers defined in Section \ref{Sect_Introduction},
		that is, $\chi' (x) = 0$ if $x \in \mathbb{Z}$ and $\chi' (x) = 1$ otherwise.
	By the above equation,
		we obtain an expression of the left hand side of (\ref{3.proposition1ii}) up to $(2 \pi i)^{l}/(l+1)!$:
		\begin{eqnarray*}
			& & 
			\frac{(2 \pi i)^{l}}{(l+1)!} \times
			\mathrm{Re} \Bigl( \frac{(l+1)!}{(2 \pi i)^{l+1}}  
				\lim_{\tau \rightarrow i \infty} \HH (l, \alpha, x', x, y', y; \tau) \Bigr) \\
			& = &
			\begin{cases}			
				\displaystyle
				- i \Bigl( \sum_{m = 1}^{\infty}  \frac{\myCos (m (x + \{ -y \} \alpha))}{m^{l}} \cot \pi m \alpha 
					+
					\chi' (y) \sum_{m = 1}^{\infty} \frac{\mySin (m (x + \{ -y \} \alpha))}{m^{l}} \Bigr),
					& 
					(l\ odd), \\
				\displaystyle
				\sum_{m = 1}^{\infty}  \frac{\mySin (m (x + \{ -y \} \alpha))}{m^{l}} \cot \pi m \alpha 
					-
					\chi' (y)  \sum_{m = 1}^{\infty} \frac{\myCos (m (x + \{ -y \} \alpha))}{m^{l}},
					& 
					(l\ even).
			\end{cases} 
		\end{eqnarray*}
	On the other hand, if $y \notin \mathbb{Z}$, 
		then it follows from (\ref{2.infiniteExpression_H}) and $\langle -y \rangle = \{ -y \}$ that
		\begin{eqnarray*}
			& & 
			\HH (l, \alpha, x, y) \\
			& = &
			\frac{1}{2} \sum_{m \in \mathbb{Z} \atop (m \neq 0)}
				\frac{\e(m (x + \langle -y \rangle \alpha))}{m^s} 
				\Bigl( \frac{\e(\alpha m) + 1}{\e(\alpha m) -1} - 1 \Bigr)\\
			& = &
			\frac{1}{2i} \sum_{m \in \mathbb{Z} \atop (m \neq 0)}
				\frac{\e(m (x + \langle -y \rangle \alpha))}{m^s} 
				\cot \pi m \alpha 
				-
				\frac{1}{2}
					\sum_{m \in \mathbb{Z} \atop (m \neq 0)}
					\frac{\e(m (x + \langle -y \rangle \alpha))}{m^s} 
				\\			
			& = & 
			\begin{cases}			
				\displaystyle
				- i \Bigl( \sum_{m = 1}^{\infty}  \frac{\myCos (m (x + \{ -y \} \alpha))}{m^{l}} \cot \pi m \alpha 
					+
					\sum_{m = 1}^{\infty} \frac{\mySin (m (x + \{ -y \} \alpha))}{m^{l}} \Bigr),
					& 
					(l\ odd), \\
				\displaystyle
				\sum_{m = 1}^{\infty}  \frac{\mySin (m (x + \{ -y \} \alpha))}{m^{l}} \cot \pi m \alpha 
					-
					\sum_{m = 1}^{\infty} \frac{\myCos (m (x + \{ -y \} \alpha))}{m^{l}},
					& 
					(l\ even).
			\end{cases} 
		\end{eqnarray*}
	If $y \in \mathbb{Z}$, then it does from (\ref{1.definition_H}), (\ref{1.definition_generalized_H}) and $\langle -y \rangle = 1$ that 
		\begin{eqnarray*}
			& & 
			\HH (l, \alpha, x, y) \\
			& = &
			\sum_{m=1}^{\infty} \frac{\e(mx)}{m^{l}} \frac{\e (m \alpha)}{1-\e(m \alpha)}
				+
				\e(\frac{1-l}{2})
				\sum_{m=1}^{\infty} \frac{\e(-mx)}{m^{l}} \frac{\e (m \alpha)}{1-\e(m \alpha)}\\
			& = &
			\frac{1}{2} \sum_{m=1}^{\infty} \frac{\e(mx)}{m^{l}} \Bigl( \frac{\e(\alpha m) + 1}{\e(\alpha m) -1} + 1 \Bigr)
				+
				\frac{1}{2} \sum_{m=1}^{\infty} \frac{\e(-mx)}{(-m)^{l}} \Bigl( \frac{\e(- \alpha m) + 1}{\e(- \alpha m) -1} - 1 \Bigr)
				\\
			& = & 
			\begin{cases}			
				\displaystyle
				- i \sum_{m = 1}^{\infty}  \frac{\myCos (m x)}{m^{l}} \cot \pi m \alpha 
					+
					Cl_l(x),
					& 
					\quad (l\ odd), \\
				\displaystyle
				\sum_{m = 1}^{\infty}  \frac{\mySin (m x)}{m^{l}} \cot \pi m \alpha 
					+
					i Cl_l(x),
					& 
					\quad (l\ even).
			\end{cases} 
		\end{eqnarray*}	
		Thus we get (i).

	We will verify (ii) next.
	By \cite[PROPOSITION 11]{machide} and (\ref{2.relation}),
		we obtain
		\begin{eqnarray*}
			& & \mathrm{Re} \lim_{\tau \rightarrow i \infty} S_{m,n}(r, x', x, y', y; \tau) \\
			& = &
			\begin{cases}
				S_{m,n}(r, x, y; \tau) - \frac{1}{4} \mathfrak{C}(r, x, y) & (m= n=1, (0, x, y) \in (1, n(r), d(r)) \mathbb{R} + \mathbb{Z}^3), \\
				S_{m,n} \bigl(\begin{smallmatrix} x  \\ y\end{smallmatrix};r \bigr) & (otherwise).
			\end{cases}
		\end{eqnarray*}
	By virtue of \cite[LEMMA 8]{machide},
		the condition $(0, x, y) \in (1, n(r), d(r)) \mathbb{R} + \mathbb{Z}^3$ is equivalent to 
		the condition
		$y, n(r) y - d(r) x \in \mathbb{Z}$.
	Thus we obtain (ii) .
\end{proof}

We prove the following proposition, or the remainder of the tasks in this section.
\begin{proposition}
	Theorem \ref{4.Theorem1} induce (\ref{1.Arakawa_Equation}) with integers $s$ less than $-1$ 
		if $\tau \rightarrow i \infty$.
\end{proposition}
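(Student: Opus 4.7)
The plan is to specialize Theorem \ref{4.Theorem1} at an integer $l = 1 - s \geq 3$ (so $s \leq -2$), divide (\ref{4.theorem1a}) by $(2\pi i)^{l+1}/(l+1)!$, take real parts, and let $\tau \to i\infty$; each limit will then be matched with a term of (\ref{1.Arakawa_Equation}) using Proposition \ref{DegenerationProperties}.

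I would first fix the parameter dictionary. Writing $M = \bigl(\begin{smallmatrix} x' & x \\ y' & y \end{smallmatrix}\bigr)$, the identity $\HH(l,\alpha,x,y) = -H(\alpha, 1-l, -y, x)$ from (\ref{1.definition_generalized_H}) suggests setting Arakawa's $p' := -y$, $q' := x$. Since $VM$ has bottom row $(cx'+dy',\,cx+dy)$ and top row $(ax'+by',\,ax+by)$, the degenerate limit of $\HH(l, V\alpha, VM;\tau)$ corresponds to $-H(V\alpha, s, p, q)$ with $p := -(cx+dy)$ and $q := ax+by$. A short computation using $ad-bc=1$ then verifies $pa + qc = -y = p'$ and $pb + qd = x = q'$, matching Arakawa's conventions, while $j(V;\alpha)^{l-1}$ equals $\beta^{-s}$.

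Applying Proposition \ref{DegenerationProperties}(i) to the two $\HH$-terms on the LHS yields $\HH(l,\alpha,x,y)$ and $\HH(l,V\alpha,ax+by,cx+dy)$, plus Clausen corrections $\chi(y) Cl_l(x) = \chi(p') Cl_l(q')$ and $\chi(cx+dy) Cl_l(ax+by) = \chi(p) Cl_l(q)$ (using $\chi(-z)=\chi(z)$); these carry exactly the indicators $\chi(p)$, $\chi(p')$ appearing in Arakawa's formula. Applying Proposition \ref{DegenerationProperties}(ii) to each elliptic Dedekind-Rademacher sum $S_{k+1,l-k}(d/c, -\vec{x}, \vec{y}; \tau)$ appearing in $R_V$, and noting that $l \geq 3$ forces $(k+1,l-k) \neq (1,1)$ throughout, gives the classical sum $S_{k+1,l-k}(d/c, -x, y)$ with no cotangent correction. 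Summing these against $(-\beta)^k\binom{l+1}{k+1}$ then reproduces the Bernoulli-polynomial evaluation of $\zeta_2(s,\beta,\cdot)$ at $s = 1-l$, and via the Barnes-zeta form of $L$ given in the remark after Theorem \ref{1.Arakawa_Theorem} converts to $-(2\pi)^{-s}\e(-s/4) L(\alpha,s,p',q',c,d)$. The Clausen corrections are in turn identified with Arakawa's $\Gamma(s)\zeta(s,\cdot)$ terms through the Hurwitz functional equation: at $s = 1-l$ the combination $\zeta(s,\langle q\rangle)+\e(s/2)\zeta(s,\langle -q\rangle)$ vanishes while $\Gamma(s)$ has a simple pole, and the resulting finite limit is a Clausen-type value proportional to $Cl_l(q)$.

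The principal obstacle is the bookkeeping: reconciling the normalization $(l+1)!/(2\pi i)^{l+1}$ with the prefactors $(2\pi)^{-s}\e(\pm s/4)\beta^{-s}$ in Arakawa's formula, and carefully tracking the two pole cancellations at the integer point $s=1-l$ -- the $\Gamma(s)$-pole against the zero of $\e(s)-1$ in the Barnes form of $L$, and against the vanishing zeta combinations in the $\chi(p)$, $\chi(p')$ terms. Since both cancellations yield explicit Bernoulli-polynomial values matching, up to the above normalizations, the Clausen and classical Dedekind-Rademacher expressions produced on our side, the identification goes through and establishes (\ref{1.Arakawa_Equation}) at integers $s \leq -2$.
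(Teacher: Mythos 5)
Your proposal follows essentially the same route as the paper: degenerate (\ref{4.theorem1a}) as $\tau \to i\infty$ via Proposition \ref{DegenerationProperties}, then match the resulting identity term by term against (\ref{1.Arakawa_Equation}) at $s = 1-l$ under the substitution $(x,y)=(q',-p')$. Your parameter dictionary and the identifications of the Clausen corrections with the $\Gamma(s)\zeta(s,\cdot)$ terms and of the classical Dedekind--Rademacher sums with the Barnes-zeta form of $L$ are exactly the content the paper defers to the proof of Arakawa's Proposition 2.2, so the argument is correct and matches the paper's.
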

\begin{proof}
	For any integer $l$, let $\psi (l)$ denote $1$ if $l$ is odd, and $i$ if $l$ is even.
	By tending $\tau$ to $i \infty$ in (\ref{4.theorem1a}) up to $(l+1)!/(2 \pi i)^{l+1}$ 
		and taking its real part,
		we obtain by Proposition \ref{DegenerationProperties} that
		\begin{eqnarray*}
			& & 
			\frac{(l+1)!}{(2 \pi i)^{l}}
			\Biggl[
				\Bigl(
					\HH (l, \alpha, x, y) - \psi(l) \chi(y) Cl_l(x)
				\Bigl) \\
			& & 
			\quad 
				-
	 			j(V;\alpha)^{l-1}
				\Bigl(
	 				\HH (l, V\alpha, ax + by, cx + dy) - \psi(l) \chi(cx + dy) Cl_l(ax + by)
				\Bigr)
			\Biggr]\\
			& = &
			\sum_{k=-1}^{m} \binom{m+1}{k+1} 
				(-j(V;\alpha))^k S_{k+1,m-k} ( \frac{d}{c},-x, y).
		\end{eqnarray*}
	On the other hand, we can derive the above equation 
		from (\ref{1.Arakawa_Equation}) 
		with integers $s < -1$ and
			$(x, y) = (q', -p') = (q, -p) \bigl( \begin{smallmatrix} d & -c \\ -b & a\end{smallmatrix} \bigr)$
		in a similar way as the proof of \cite[Proposition 2.2]{arakawa2}.
	We omit the rest of the proof because of similarity.

\end{proof}

\section{Proof of Lemma \ref{4.Lemma1}} \label{Sect_Lemma1}
We give a proof of Lemma \ref{4.Lemma1} in this section.
Let $\sideset{}{}\sum\limits_{m', m}$ (resp. $\sideset{}{^\prime}\sum\limits_{m', m}$).
	means running over all integers $(m', m) \in \mathbb{Z}^2$ (resp. $\mathbb{Z}^2 \setminus \{ (0,0) \})$.
If there is the symbol "$e$" on them,
	we promise that the summation rule follows Cauchy principal value, i.e.,
	\begin{equation}\label{sumRules}
	\begin{split}
		\sideset{}{_e}\sum_{m', m} &= \lim_{M \rightarrow \infty} \sum_{m'=-M}^M \sum_{m = -M}^M, \\
		\sideset{}{_e^\prime}\sum_{m', m} &= \lim_{M \rightarrow \infty} \underset{(m', m) \neq (0, 0)}{\sum_{m'=-M}^M \sum_{m = -M}^M}.
	\end{split}
	\end{equation}
In order to prove Lemma \ref{4.Lemma1},
	we need the equation in the following proposition:
\begin{proposition}\label{5.Proposition1}
	Let $\alpha$ be an irrational real algebraic number,
		and $s$ be a complex number with $\mathrm{Re}(s) >3$.
	If $x', x, y', y \in \mathbb{R} \setminus \mathbb{Z}$, then
		\begin{equation}\label{5.proposition1a}
			\Bigl( 
			\sideset{}{_e^\prime}\sum_{m', m} \sideset{}{_e^\prime}\sum_{n', n} -\sideset{}{_e^\prime}\sum_{n', n} \sideset{}{_e^\prime}\sum_{m', m}
			\Bigr)
			\frac{\e(m'y' + my)}{(\tau m' + m)^s} 
				\frac{\e(n'x' + nx)}{\alpha (\tau m' + m) +\tau n' +n}
			=
			0,
		\end{equation}
		or the order of $(m', m)$ and $(n',n)$ in the sum can be changed.
\end{proposition}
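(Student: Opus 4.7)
The proof amounts to an interchange-of-summation argument for sums that are only conditionally convergent. The natural strategy is to introduce a single symmetric square partial sum
\begin{equation*}
	T_M := \sideset{}{^\prime}\sum_{|m'|, |m| \le M}\,\, \sideset{}{^\prime}\sum_{|n'|, |n| \le M} \frac{\e(m'y' + my)\,\e(n'x' + nx)}{(\tau m' + m)^s\,[\alpha(\tau m' + m) + \tau n' + n]},
\end{equation*}
and to verify that both iterated sums on the left of (\ref{5.proposition1a}) equal $\lim_{M \to \infty} T_M$; the claimed identity then follows immediately.

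First I would show that each iterated sum is well-defined. When the innermost summation runs over $(n', n)$, it is the Kronecker--Eisenstein series
\begin{equation*}
	G(z) := \sideset{}{_e^\prime}\sum_{n', n} \frac{\e(n'x' + nx)}{z + \tau n' + n},
\end{equation*}
which is bounded on any compact set avoiding the lattice $\mathbb{Z} + \tau \mathbb{Z}$ and is in general controlled by the reciprocal of the distance from $z$ to that lattice. At the arithmetic points $z = \alpha(\tau m' + m)$, the Thue--Siegel--Roth theorem, used precisely as in the proof of Lemma \ref{3.Lemma1}, provides a polynomial lower bound on this distance; combined with $\mathrm{Re}(s) > 3$, the outer sum over $(m', m)$ then converges absolutely. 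When the innermost summation runs over $(m', m)$, the inner sum
\begin{equation*}
	H(n', n) := \sideset{}{^\prime}\sum_{m', m} \frac{\e(m'y' + my)}{(\tau m' + m)^s\,[\alpha(\tau m' + m) + \tau n' + n]}
\end{equation*}
is absolutely convergent for each $(n', n)$, and a Roth-type bound analogous to the first case controls the outer Cauchy principal value over $(n', n)$.

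The central step is the identification $I_1 = \lim_{M \to \infty} T_M = I_2$, where $I_1$ and $I_2$ denote the first and second iterated sums in (\ref{5.proposition1a}). Writing $T_M$ with the $(n', n)$ sum collected first gives a partial sum of the first iterated series in which $G$ has been replaced by its $M$-truncated version $G_M$; the discrepancy with $I_1$ is an $M$-dependent tail in $(n', n)$, which one must show vanishes in the weighted norm prescribed by the outer sum. The reversed identity $I_2 = \lim_M T_M$ is handled symmetrically. The main obstacle is the uniform control of the tail $G - G_M$ at the arithmetic points $\alpha(\tau m' + m)$ when $(m', m)$ and $(n', n)$ have comparable sizes near the truncation boundary: the convergence of $G$ is only conditional and rests on the symmetric cancellation built into the Cauchy principal value convention (\ref{sumRules}), so the boundary contributions must be estimated with care using the irrationality of $\alpha$ in an effective (Roth) form.
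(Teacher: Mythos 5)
Your overall strategy---identifying both iterated sums with a common truncated sum and controlling the discrepancy---is the same one the paper uses, and your preliminary observations are sound: once the inner $(n',n)$ sum is evaluated as a Kronecker--Eisenstein series, its size is governed by the reciprocal distance from $\alpha(\tau m'+m)$ to the lattice, and the Thue--Siegel--Roth theorem together with $\mathrm{Re}(s)>3$ makes the outer $(m',m)$ sum absolutely convergent, exactly as in the proof of Lemma \ref{3.Lemma1}. The problem is that your argument stops precisely where the real work begins. The step you describe as ``the main obstacle''---a bound on the tails $G-G_M$ (equivalently, on rectangular partial sums of the conditionally convergent inner series) that is \emph{uniform} in the truncation parameters and grows at most polynomially in $(m',m)$---is not something that can be dispatched by saying it ``must be estimated with care.'' It is the entire technical content of the paper's Lemma \ref{5.Lemma1}, namely the inequality
\begin{equation*}
\Abs{\sum_{n'=N_1'}^{N_2'}\sum_{n=N_1}^{N_2}\frac{\e(n'y'+ny)}{\alpha(\tau m'+m)+\tau n'+n}}\leq D\,\abs{\tau m'+m}^{\lambda+1}
\end{equation*}
with $D$ independent of $N_1',N_2',N_1,N_2,m',m$. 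Proving it requires a two-variable Abel summation in Siegel's manner, a case analysis according to whether $-\alpha m'$ or $-\alpha m$ lies in the truncation rectangle, explicit integral comparisons for the resulting first- and second-order differences, and only at the end the Roth bound; none of this is present in your proposal. Without the exponent $\lambda+1$ and the resulting margin $\mathrm{Re}(s)-\lambda-1>2$, you cannot sum the tail contributions over the infinitely many $(m',m)$ outside a fixed box, which is exactly the regime you flag as problematic and then leave unresolved.

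Two further remarks. First, your diagonal truncation (the same parameter $M$ for both boxes) forces you to confront the regime where $(m',m)$ and $(n',n)$ are simultaneously near the boundary; the paper decouples the two cutoffs---it first fixes $M=M(\epsilon)$ for the outer variables using absolute convergence of $\sum'\abs{\tau m'+m}^{-s_0}$, and only then chooses $N$ large depending on $M$ and $\epsilon$---which, combined with the uniform bound above, removes this difficulty entirely. Second, your sentence asserting that ``a Roth-type bound analogous to the first case controls the outer Cauchy principal value over $(n',n)$'' glosses over the fact that this outer sum is itself only conditionally convergent (the terms decay like $\abs{\tau n'+n}^{-1}$), so it would need its own partial-summation argument; the paper avoids ever establishing the second iterated sum separately, since its estimate directly shows that the partial sums $\sum_{\abs{n'}<N,\,\abs{n}<N}\sum'_{m',m}$ converge to the first iterated sum as $N\to\infty$ in the sense of (\ref{sumRules}), which yields existence and equality in one stroke.
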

We give the proof of Lemma \ref{4.Lemma1} 
before verifying (\ref{5.proposition1a}).
\begin{proof}[Proof of Lemma \ref{4.Lemma1}]
	Let 
		$\bigl(
		\begin{smallmatrix} 
			x' & x  \\
			y' & y 
		\end{smallmatrix} \bigr)
		=
		\bigl(
		\begin{smallmatrix} 
			\vec{x}  \\ 
			\vec{y}
		\end{smallmatrix} \bigr)
 		\in \mathrm{M}_2 (V)$.
	It follows from (\ref{2.key_formula1}) that
		\begin{equation*}
			\HH \bigl(l, T \alpha,
			T
			\bigl(
			\begin{smallmatrix} 
				\vec{x}  \\ 
				\vec{y}
			\end{smallmatrix}
			\bigr)
			;\tau \bigr)
			=
			\HH \bigl(l, \alpha+1,
			\bigl(
			\begin{smallmatrix} 
				\vec{x} + \vec{y} \\ 
				\vec{y}
			\end{smallmatrix}
			\bigr)
			;\tau \bigr)
			=
			\HH \bigl(l, \alpha, 
			\bigl(
			\begin{smallmatrix} 
				\vec{x}  \\ 
				\vec{y}
			\end{smallmatrix}
			\bigr)
			;\tau \bigr),
		\end{equation*}
	thus we obtain (\ref{4.theorem1a}) in case of $V = T^{+}$.
Replacing $\alpha$ by $T^{-} \alpha$ on the above equation shows the case $V = T^{-}$.

We will prove (\ref{4.theorem1a}) in case of $V =S$. 
Let $l$ be an integer with $l \geq4$,
	and $x', x, y', y \in \mathbb{R} \setminus \mathbb{Z}$. 
Since 
	$Y^l -X^l = (Y-X) \sum_{k=0}^{l-1} X^k Y^{l-1-k}$,
	\begin{equation*}
		\frac{1}{X^l} \frac{1}{X-Y} 
			+  \frac{1}{Y^l} \frac{1}{Y-X} 
		=
		- \sum_{k=0}^{l-1} \frac{1}{Y^{k+1}} \frac{1}{X^{l-k}}.
	\end{equation*}	
	By replacing $Y$ by $- \frac{1}{\alpha} Y$
	and multiplying $\alpha^{-1}$ both sides, 
	we get
	\begin{equation*}
		\frac{1}{X^l} \frac{1}{\alpha X+Y} 
		- \alpha^{l-1} \frac{1}{(-Y)^l} \frac{1}{\frac{1}{\alpha}Y+X} 
		=
		\sum_{k=0}^{l-1} (-\alpha)^k \frac{1}{Y^{k+1}} \frac{1}{X^{l-k}}.
	\end{equation*}
	By substituting $\tau m' +m$ and $\tau n'+ n$ for $X$ and $Y$ respectively,
		and  ranging $(m', m)$ and $(n', n)$ over $\mathbb{Z}^2 \setminus \{(0,0)\}$ 
		up to $\e(m'x'+mx)\e(n'y'+ny)$ as the  rules (\ref{sumRules}),
		we obtain
		\begin{multline}\label{5.proof_of_lemma4_1aa}
			\sideset{}{_e^\prime}\sum_{n', n} \sideset{}{_e^\prime}\sum_{m', m}
			\Bigl(
			\frac{\e(m'x'+mx)}{(\tau m' + m)^l} 
				\frac{\e(n'y'+ny)}{\alpha (\tau m' + m) +\tau n' +n}
			\\
			-
			\alpha ^{l-1}
			\frac{\e(n'y'+ny)}{(-\tau n' - n)^l}
				\frac{\e(m'x'+mx)}{\frac{1}{\alpha} (\tau n' + n) +\tau m' +m}
			\Bigr)\\
			=
			\sum_{k=0}^{l-1} (-\alpha)^k 
			\sideset{}{_e^\prime}\sum_{n', n} \frac{\e(n'y'+ny)}{(\tau n' + n)^{k+1}} 
 			\sideset{}{_e^\prime}\sum_{m', m} \frac{\e(m'x'+mx)}{(\tau m' + m)^{l-k}} .
		\end{multline}
	On the other hand, L. Kronecker \cite{kronecker1} (see \cite{weil} for a proof) showed that
		\begin{equation*}\label{5.proof_of_lemma4_1bb}
		\begin{split}
			\F (\vec{x};X;\tau)
			&=
			\sideset{}{_e}\sum_{m',m}
			\frac{\e(-m'x'-mx)}{X +\tau m' +m}
			=
			- \sideset{}{_e}\sum_{m',m}
			\frac{\e(m'x'+mx)}{-X +\tau m' +m},
			\\
			B_k(\vec{x};\tau)
			&=
			-\frac{k!}{(2\pi i)^k}
			\sideset{}{_e^\prime}\sum_{m',m}
			\frac{\e(m'x'+mx)}{(\tau m' +m)^k}.
		\end{split}
		\end{equation*}
	Thus, by (\ref{5.proposition1a}),
		the left hand side of (\ref{5.proof_of_lemma4_1aa}) equals 
		\begin{eqnarray*}
			& &
			\sideset{}{_e^\prime}\sum_{m', m}
				\frac{\e(m'x'+mx)}{(\tau m' + m)^l} 
				\Bigl(		
					\sideset{}{_e}\sum_{n', n} 			
					\frac{\e(n'y'+ny)}{\alpha (\tau m' + m) +\tau n' +n}
					-
					\frac{1}{\alpha (\tau m' + m)}			
				\Bigr)
			\\
			& &
			-
			\alpha ^{l-1}
			\sideset{}{_e^\prime}\sum_{n', n} 
				\frac{\e(n'y'+ny)}{(-\tau n' - n)^l}
				\sideset{}{_e}\sum_{m', m}
				\Bigl(
					\frac{\e(m'x'+mx)}{\frac{1}{\alpha} (\tau n' + n) +\tau m' +m}
					-
					\frac{1}{\frac{1}{\alpha} (\tau n' + n)}
				\Bigr)
			\\
			& = &
 			\sideset{}{_e^\prime}\sum_{m', m}
			\frac{\e(m'x'+mx)}{(\tau m' + m)^l} 
				\F (- \vec{y}; \alpha (\tau m' + m); \tau) 
			+ \frac{1}{\alpha}
			\frac{(2\pi i)^{l+1}}{(l+1)!} B_{l+1}(\vec{x};\tau) 
			\\
			& &
			-
			\alpha ^{l-1}  \Bigl(
			\sideset{}{_e^\prime}\sum_{n', n}
			\frac{\e(-n'y'-ny)}{(\tau n' + n)^l} 
				\F(-\vec{x}; -\frac{1}{\alpha} (\tau n' + n);\tau)
			+ (-1)^l \alpha
			\frac{(2\pi i)^{l+1}}{(l+1)!} B_{l+1}(\vec{y};\tau)
			\Bigr).
		\end{eqnarray*}
	We note that one can omit the symbol "$e$" on $\sideset{}{_e^\prime}\sum_{}$ above
		because the series are absolute converges if $l \geq 4$.
	On the other hand, the right hand side of (\ref{5.proof_of_lemma4_1aa}) equals 
		\begin{eqnarray*}
			& &
			\frac{(2\pi i)^{l+1}}{(l+1)!}
				\sum_{k=0}^{l-1} (-\alpha)^k \binom{l+1}{k+1} B_{k+1} (\vec{y};\tau) B_{l-k} (\vec{x};\tau)
		\\
		&=&
			R_S (l, \alpha, M; \tau)
			-
			\frac{(2 \pi i)^{l+1}}{(l+1)!}
			\Bigl(
				-
				\frac{1}{\alpha} B_{l+1} (\vec{x};\tau)
				+
				(-\alpha)^{l} B_{l+1} (\vec{y};\tau)
			\Bigr)
		\end{eqnarray*}
	These yield (\ref{4.theorem1a}) with $l \geq 4$ and $V =S$
		since the set 
		$\bigl{\{} 
		\bigl(
		\begin{smallmatrix}
			x' & x  \\ 
			y' & y
		\end{smallmatrix} \bigr) 
		\big{\vert}
		x', x, y', y \in \mathbb{R} \setminus \mathbb{Z}
		\bigr{\}}$
		is dense in $\mathrm{M}_2 (V)$.
	In order to get (\ref{4.theorem1a}) with $l=3$ and $V = S$,
		we need differential equations for Kronecker's double series
		and their generating function as follows.
		\begin{eqnarray*}
			& &	
			( \tau \frac{\partial}{\partial x '} + \frac{\partial}{\partial x }) 
			\F (\vec{x}; X;\tau)
			 =
			2 \pi i X \F (\vec{x}; X;\tau), \\
			& &	
			( \tau \dfrac{\partial}{\partial x '} + \dfrac{\partial}{\partial x }) 
				 B_m (\vec{x};\tau)
			=
			m B_{m-1} (\vec{x};\tau)
		\end{eqnarray*}
		which are derived from the definitions (\ref{2.definition_K}) and (\ref{3.definition_B}) immediately.
	If $ \mathrm{Re}\  s >3$ and $(z', z) \in \{ (x', x), (y', y) \}$, 
		we can easily see from the proof of Lemma \ref{3.Lemma1} that the series
		\begin{equation*}
			\sideset{}{^\prime}\sum\limits_{m', m}
			(\tau \frac{\partial}{\partial z'} + \frac{\partial}{\partial z})
			\Bigl(
			\frac{\e (m'y'+my)}{(\tau m' +m)^{s}}
			\F (\vec{x}; \alpha (\tau m' +m); \tau)
			\Bigr)
		\end{equation*}
		absolutely converges,
		thus the term wise differentiation of the above series is possible. 
	Therefore applying the differential operator $\tau \dfrac{\partial}{\partial x '} + \dfrac{\partial}{\partial x }$
		to (\ref{4.theorem1a}) with $l = 4$
		yields (\ref{4.theorem1a}) with $l=3$.
\end{proof}
To give a proof of Proposition \ref{5.Proposition1},
	we need the following lemma.
Its proof is based on Siegel's way \cite[pp. 31--32]{siegel1}.
We note that it was used in \cite{katayama1} for proving Rmanujan's formulas with respect to $L$-functions. 
\begin{lemma}\label{5.Lemma1}
	Let $ y', y, \lambda$ be real numbers, 
		and $N_1', N_1, N_2', N_2, m', m$ integers.
	If $y', y \notin \mathbb{Z}, \lambda >0, 
		N_1' \leq N_2', N_1 \leq N_2 $ and $ (m', m) \neq	 (0, 0)$,
		then there is a positive real number $D = D(y', y, \lambda, \alpha, \tau)$
			dependent to $y', y, \lambda, \alpha, \tau$ 
			and 
			independent to $N_1', N_1, N_2', N_2, m', m$
			such that
			\begin{equation} \label{5.lemma1a}
				\Abs{
					\sum_{n' = N_1'}^{N_2'} \sum_{n = N_1}^{N_2}
					\frac{\e(n'y' + ny)}{\alpha (\tau m' +m) + \tau n' +n}
				} 
				\leq
				D \abs{\tau m' + m}^{\lambda +1}.
			\end{equation} 
\end{lemma}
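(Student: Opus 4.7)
The plan is to follow Siegel's method of iterated Abel summation. Set $A := \alpha(\tau m' + m)$, so the denominator is $A + \tau n' + n$, with imaginary part
\[
\mathrm{Im}(A + \tau n' + n) = \mathrm{Im}(\tau)(\alpha m' + n').
\]
Since $\alpha$ is real, this gives the fundamental lower bound $|A + \tau n' + n| \geq |\mathrm{Im}(\tau)| \cdot |\alpha m' + n'|$; and since $\alpha$ is irrational algebraic, Thue--Siegel--Roth yields $|\alpha m' + n'| \geq c_{\lambda_0}/|m'|^{1+\lambda_0}$ for any $\lambda_0 > 0$ whenever $m' \neq 0$ (the case $m'=0$ is trivial, since then $A = \alpha m$ is real and bounded).

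First I would apply Abel summation to the inner sum over $n$. Because $y \notin \mathbb{Z}$, the partial sums $T_n^{(y)} := \sum_{k=N_1}^n \e(ky)$ are uniformly bounded by $C_y := 2/|1 - \e(y)|$, so summation by parts rewrites the inner sum as
\[
\frac{T_{N_2}^{(y)}}{A + \tau n' + N_2} + \sum_{n=N_1}^{N_2-1} \frac{T_n^{(y)}}{(A + \tau n' + n)(A + \tau n' + n + 1)}.
\]
Routine estimates on the associated series $\sum_n 1/|(w+n)(w+n+1)|$ produce an inner-sum bound of the form $C_y \cdot O(1/|\mathrm{Im}(\tau)(\alpha m' + n')|^2)$, uniform in $N_1, N_2$.

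Next I would apply Abel summation to the outer sum over $n'$, using $y' \notin \mathbb{Z}$ and the uniform bound $|T_{n'}^{(y')}| \leq C_{y'} := 2/|1 - \e(y')|$. This produces a boundary contribution together with an interior sum involving first $n'$-differences of the inner-sum expression, which contribute an extra factor $|\tau|$ and an additional denominator of comparable size. Summing all contributions and invoking Thue--Siegel--Roth to absorb the near-resonant terms $n' \approx -\alpha m'$ would yield the desired bound $D \cdot |\tau m' + m|^{1+\lambda}$ for any $\lambda > 0$, with $D = D(y', y, \lambda, \alpha, \tau)$ and independent of $N_1', N_2', N_1, N_2, m', m$.

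The main obstacle will be the bookkeeping for the iterated Abel summation: after two applications the sum decomposes into four kinds of contributions (boundary--boundary, boundary--interior, interior--boundary, interior--interior), each of which must be estimated with care. The interior--interior term in particular requires combining Thue--Siegel--Roth with standard summability of $\sum_n 1/|w+n|^2$-type series; controlling the transition between the \emph{resonant regime} (where $\alpha m' + n'$ is nearly zero) and the \emph{non-resonant regime} is what forces the $\lambda > 0$ slack in the exponent and is where the algebraicity hypothesis on $\alpha$ enters essentially.
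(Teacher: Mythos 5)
Your overall strategy --- double Abel summation exploiting $y',y\notin\mathbb{Z}$, followed by Thue--Siegel--Roth --- is the same Siegel-style route the paper takes, but as written the key estimate has a genuine gap: you extract the lower bound on the denominator only from its imaginary part, $\lvert A+\tau n'+n\rvert\ge\lvert\mathrm{Im}(\tau)\rvert\,\lvert\alpha m'+n'\rvert$, i.e.\ only from the resonance in the outer variable. This fails in two ways. First, when $m'=0$ and $n'=0$ the bound is vacuous; that case is not ``trivial'' --- it is exactly the classical sum $\sum_n \e(ny)/(\alpha m+n)$, whose resonant term $n\approx-\alpha m$ must itself be controlled by Thue--Siegel--Roth applied to the pair $(m,n)$, which your scheme never invokes. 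Second, and more seriously, for the resonant $n'$ (the integer nearest $-\alpha m'$) Thue--Siegel--Roth only gives $\lvert\alpha m'+n'\rvert\ge c/\lvert m'\rvert^{1+\lambda_0}$, so your stated inner-sum bound $O\bigl(1/\lvert\mathrm{Im}(\tau)(\alpha m'+n')\rvert^{2}\bigr)$ for that single value of $n'$ is already of size $\lvert m'\rvert^{2\lambda_0+2}$. Since $2\lambda_0+2>2$ while $\lvert\tau m'+m\rvert$ can be comparable to $\lvert m'\rvert$, this can never be absorbed into $D\lvert\tau m'+m\rvert^{\lambda+1}$ for small $\lambda$, and the application in Proposition \ref{5.Proposition1} requires precisely $\lambda<\mathrm{Re}(s)-3$, hence arbitrarily small $\lambda$. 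The outer Abel summation cannot repair this, because the inner-sum values jump by that same amount across the resonant $n'$, so their first differences are just as large.

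The missing idea is the product inequality $\lvert\tau r'+r\rvert^{2}\ge 2(\lvert\tau\rvert-\lvert\mathrm{Re}\,\tau\rvert)\lvert r'r\rvert$ from the proof of Lemma \ref{3.Lemma1}, applied with $r'=\alpha m'+\mu'$ and $r=\alpha m+\mu$. The paper writes the second differences arising from the double Abel summation as double integrals of $(z+\tau\mu'+\mu)^{-3}$, splits $\lvert z+\tau\mu'+\mu\rvert^{3}$ into the product $\lvert\alpha m'+\mu'\rvert^{3/2}\lvert\alpha m+\mu\rvert^{3/2}$ (and the corner terms $1/\lvert f_{N_{j'}',N_j}\rvert$ into $\lvert\alpha m'+N_{j'}'\rvert^{-1/2}\lvert\alpha m+N_j\rvert^{-1/2}$), and then applies Thue--Siegel--Roth in the weakened form $\lvert\alpha l+k\rvert^{-1/2}<C^{1/2}(\lvert l\rvert^{(\lambda+1)/2}+1)$ to \emph{each} of the two variables separately, so the total exponent is $(\lambda+1)/2+(\lambda+1)/2=\lambda+1$ rather than $2(\lambda+1)$. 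This two-variable splitting is also what forces the four-case analysis according to whether $-\alpha m'\in[N_1',N_2']$ and $-\alpha m\in[N_1,N_2]$, with resonant ranges cut at the nearest integer. Without it your argument loses a full power in the exponent and the lemma as stated does not follow.
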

\begin{proof}
	We consider the four cases as follows:
		\begin{equation*}
			\begin{split}
				 \mathrm{(i)}  & -\alpha m' \notin [N_1', N_2'] ,\quad -\alpha m \notin [N_1, N_2], \\
				 \mathrm{(ii)}  & -\alpha m' \in [N_1', N_2'], \quad -\alpha m \notin [N_1, N_2], \\
				 \mathrm{(iii)}  & -\alpha m' \notin [N_1', N_2'], \quad -\alpha m \in [N_1, N_2], \\
				 \mathrm{(iv)}  & -\alpha m' \in [N_1', N_2'], \quad -\alpha m \in [N_1, N_2].
			\end{split}
		\end{equation*}
	Firstly, we verify (\ref{5.lemma1a}) in case of (i). 
	Set the real numbers $z, a_{n'}' , a_{n}, f_{n', n}$ as 
		\begin{equation*}
		\begin{split}
				& a_{n'}' = \frac{\e (n'y')}{\e(y')-1},\quad a_{n} = \frac{\e (ny)}{\e(y)-1},\\
				& z = \alpha (\tau m' +m),\quad f_{n', n} = z + \tau n' +n.
		\end{split}
		\end{equation*}
	Note that $\e(y')-1 \neq 0$ and $\e(y)-1 \neq 0$ because of $y', y \notin \mathbb{Z}$.
	Since $\e (n'y') = a_{n'+1}' - a_{n'}'$ and
		$\e (ny) = a_{n+1} - a_{n}$,
		we find that
		\begin{eqnarray*}
		& &
			\sum_{n' = N_1'}^{N_2'} \sum_{n = N_1}^{N_2} \frac{\e(n'y' + ny)}{z + \tau n' +n}
		\\
		& = &
			\sum_{n' = N_1'}^{N_2'} \sum_{n = N_1}^{N_2} \frac{a'_{n'+1}a_{n+1}-a'_{n'+1}a_{n}-a'_{n'}a_{n+1}+a'_{n'}a_{n}}{f_{n',n}}
		\\
		& = &
			\sum_{n' = N_1' +1}^{N_2' +1} \sum_{n = N_1 +1}^{N_2+1} \frac{a'_{n'}a_{n}}{f_{n'-1,n-1}}
			-
			\sum_{n' = N_1'+1}^{N_2'+1} \sum_{n = N_1}^{N_2} \frac{a'_{n'}a_{n}}{f_{n'-1,n}}
			\\
			& &
			\hspace{0pt}
			-
			\sum_{n' = N_1'}^{N_2'} \sum_{n = N_1+1}^{N_2+1} \frac{a'_{n'}a_{n}}{f_{n',n-1}}
			+
			\sum_{n' = N_1'}^{N_2'} \sum_{n = N_1}^{N_2} \frac{a'_{n'}a_{n}}{f_{n',n}}
		\\
		& = &
			\sum_{n' = N_1'+1}^{N_2'} \sum_{n = N_1+1}^{N_2} a'_{n'}a_{n}
				\Bigl( \frac{1}{f_{n'-1,n-1}}-\frac{1}{f_{n'-1,n}}-\frac{1}{f_{n',n-1}}+\frac{1}{f_{n',n}} \Bigr)
			\\
			& &
			\hspace{0pt}
			-
			a_{N_1} \sum_{n' = N_1'+1}^{N_2'} a'_{n'} \Bigl(\frac{1}{f_{n'-1,N_1}}-\frac{1}{f_{n',N_1}} \Bigr)
			+
			a_{N_2+1} \sum_{n' = N_1'+1}^{N_2'} a'_{n'} \Bigl(\frac{1}{f_{n'-1,N_2}}-\frac{1}{f_{n',N_2}} \Bigr)
			\\
			& &
			\hspace{0pt}
			-
			a_{N_1'} \sum_{n= N_1+1}^{N_2} a_{n} \Bigl(\frac{1}{f_{N_1',n-1}}-\frac{1}{f_{N_1',n}} \Bigr)
			+
			a_{N_2'+1} \sum_{n= N_1+1}^{N_2} a_{n} \Bigl(\frac{1}{f_{N_2',n-1}}-\frac{1}{f_{N_2',n}} \Bigr)
			\\
			& &
			\hspace{0pt}
			+
			\frac{a'_{N_2'+1}a_{N_2+1}}{f_{N_2',N_2}}
			-
			\frac{a'_{N_2'+1}a_{N_1}}{f_{N_2',N_1}}
			-
			\frac{a'_{N_1'}a_{N_2+1}}{f_{N_1',N_2}}
			+
			\frac{a'_{N_1'}a_{N_1}}{f_{N_1',N_1}}.
		\end{eqnarray*}
	If $\xi', \xi, \xi_1', \xi_1, \xi_2', \xi_2$ are real numbers such that 
		\begin{equation*}
			\xi_1' \leq \xi_2', \quad
				\xi_1 \leq \xi_2, \quad
 				-\alpha m' \notin [\xi_1', \xi_2'], \quad
				-\alpha m \notin [\xi_1, \xi_2],
		\end{equation*}
 		then elementary integral theory tells us that
 		\begin{equation*}
 			\begin{split}
 				& \frac{1}{f_{\xi_1', \xi}} -  \frac{1}{f_{\xi_2', \xi}} = \tau \int_{\xi_1'}^{\xi_2'} \frac{d \mu'}{(z+ \tau \mu' +\xi)^2}, \\
				& \frac{1}{f_{\xi', \xi_1}} - \frac{1}{f_{\xi', \xi_2}} = \int_{\xi_1}^{\xi_2} \frac{d \mu}{(z+ \tau \xi' +\mu)^2}, \\
 				&  \frac{1}{f_{\xi_1', \xi_1}} -  \frac{1}{f_{\xi_1', \xi_2}} -  \frac{1}{f_{\xi_2', \xi_1}} + \frac{1}{f_{\xi_2', \xi_2}}
				 	= 2\tau \int_{\xi_1'}^{\xi_2'} \int_{\xi_1}^{\xi_2} \frac{d \mu'  d \mu }{(z+ \tau \mu' +\mu)^3}.
 			\end{split}
 		\end{equation*}
	Let $A = A(y', y, \tau)$ be $2(1+\abs{\tau})/\abs{\e(y')-1} \abs{\e(y)-1}$
		and $LHS$ be the left hand side of (\ref{5.lemma1a}).
	Since $2(1+\abs{\tau})\abs{a_{n'}'}\abs{a_n}  \leq A$
		if $N_1' \leq n' \leq N_2' + 1$ and $N_1 \leq n \leq N_2 + 1$,
		it follows that
		\begin{eqnarray}\label{5.lemma1cc}
			LHS
		&\leq&
			A \Bigl(
				\int_{N_1'}^{N_2'} \int_{N_1}^{N_2}
 				\frac{d \mu'  d \mu }{\abs{z+ \tau \mu' +\mu}^3} 
			\\
			& &
			\hspace{20pt}
			+
			\int_{N_1'}^{N_2'} \frac{d \mu'}{\abs{z+ \tau \mu' +N_1}^2} 
			+
			\int_{N_1'}^{N_2'} \frac{d \mu'}{\abs{z+ \tau \mu' +N_2}^2} 
			\nonumber
			\\
			& &
			\hspace{20pt}
			+
			\int_{N_1}^{N_2} \frac{d \mu}{\abs{z+ \tau N_1' +\mu}^2}
			+
			\int_{N_1}^{N_2} \frac{d \mu}{\abs{z+ \tau N_2' +\mu}^2}
			\nonumber
			\\
			& &
			\hspace{20pt}
			+
			\frac{1}{\abs{f_{N_1', N_1}}}
			+
			\frac{1}{\abs{f_{N_1', N_2}}}
			+
			\frac{1}{\abs{f_{N_2', N_1}}}
			+
			\frac{1}{\abs{f_{N_2', N_2}}}
			\Bigr).
			\nonumber
		\end{eqnarray}
	Put $t' = \alpha m'$ and $t = \alpha m$. 
	It follows from (\ref{3.lemma1aa}) that
		\begin{eqnarray*}
		& &
			\int_{N_1'}^{N_2'} \int_{N_1}^{N_2}
 				\frac{d \mu'  d \mu }{\abs{z+ \tau \mu' +\mu}^3} 
		\\	
		&\leq&
			\frac{1}{\bigl(2 (\lvert \tau \rvert - \lvert \mathrm{Re}\ \tau \rvert) \bigr)^{3/2}}
				\int_{N_1'}^{N_2'} \int_{N_1}^{N_2}
 				\frac{d \mu'  d \mu }{\abs{t' + \mu'}^{3/2} \abs{t + \mu}^{3/2}}.
		\end{eqnarray*}
	Thus, because $-t' \notin [N_1', N_2']$ and $-t \notin [N_1, N_2]$ by the condition (i), 
		there are a real number  $b = b(\tau)$ depending on $\tau$ such that
		\begin{eqnarray*}
		& &
			\int_{N_1'}^{N_2'} \int_{N_1}^{N_2}
 			\frac{d \mu'  d \mu }{\abs{z+ \tau \mu' +\mu}^3} 
		\\	
		&\leq&
			b \Bigl(
			 \frac{1}{\abs{t' + N_2'}^{1/2} \abs{t + N_2}^{1/2}}
			 	+  \frac{1}{\abs{t' + N_2'}^{1/2} \abs{t + N_1}^{1/2}} 
			\\
			& & \hspace{20pt}
				+  \frac{1}{\abs{t' + N_1'}^{1/2} \abs{t + N_2}^{1/2}}
				+  \frac{1}{\abs{t' + N_1'}^{1/2} \abs{t + N_1}^{1/2}}
			\Bigr ).
		\end{eqnarray*}
	Since
		\begin{equation*}
			\int \frac{1}{y^2 + x^2} dx
			=
			\frac{1}{y} \arctan \frac{x}{y},
		\end{equation*}
		we have
		\begin{eqnarray*}
		& &
			\int_{N_1}^{N_2} \frac{d \mu}{\abs{z+ \tau N' +\mu}^2}
		%
		\\
		&=&
			\int_{N_1}^{N_2} \frac{d \mu}{\bigl(t+\mu+(\mathrm{Re}\ \tau)(t'+N') \bigr)^2 +(\mathrm{Im}\ \tau)^2(t'+N')^2}
		\\
		&=&
			\frac{1}{ \abs{\mathrm{Im}\ \tau} \abs{t'+N'}}
				\biggl[
					\arctan 
						\frac{t+\mu+ (\mathrm{Re}\ \tau) (t'+N')} {\abs{\mathrm{Im}\ \tau} \abs{t'+N'}}
				\biggr]_{\mu= N_1}^{\mu = N_2}
		\\
		&\leq&
			\frac{\pi}{ \abs{\mathrm{Im}\ \tau} \abs{t'+N'}}.
		\end{eqnarray*}
	In a similar way, we get		
		\begin{equation*}
			\int_{N_1'}^{N_2'} \frac{d \mu'}{\abs{z+ \tau \mu' +N}^2}
		%
		\leq
			\frac{\pi}{\abs{\tau}^2 \abs{\mathrm{Im}\ \frac{1}{\tau}} \abs{t+N}}.
		\end{equation*}
	Furthermore we easily see from (\ref{3.lemma1aa}) that
		\begin{equation*}
			\frac{1}{\abs{f_{N_{j'}', N_j}}}
			=
			\frac{1}{\abs{\tau t'+t+\tau N_{j'}' + N_{j}}}
			\leq
			\Bigl( \frac{1}{2(\lvert \tau \rvert - \lvert \mathrm{Re}\ \tau \rvert)\abs{t'+N_{j'}'}\abs{t+N_{j}}} \Bigr)^{1/2}
		\end{equation*}
		if $j', j = 1,2$.
	Therefore we conclude from (\ref{5.lemma1cc}) that 
		there is a positive real number $B=B(y', y,\tau)$ depending on $y', y,\tau$ such that 
		\begin{multline*}
			LHS
			\leq
			B \Bigl(
				\bigl(\frac{1}{\abs{\alpha m' + N_1'}^{1/2}}+ \frac{1}{\abs{\alpha m' + N_2'}^{1/2}} \bigr)
				\bigl(\frac{1}{\abs{\alpha m + N_1}^{1/2}}+ \frac{1}{\abs{\alpha m + N_2}^{1/2}} \bigr) \\
				+
				\frac{1}{\abs{\alpha m' + N_1'}} + \frac{1}{\abs{\alpha m' + N_2'}}
				+
				\frac{1}{\abs{\alpha m + N_1}} + \frac{1}{\abs{\alpha m + N_2}}
			\Bigr).
		\end{multline*}
	On the other hand, by the theorem of Thue-Siegel-Roth in
		the diophantine approximation theory,
		there is a positive real number $C = C (\alpha, \lambda) > 1$ depending on $\alpha, \lambda$ such that 
		\begin{equation*}
			\Abs{\alpha - \frac{k}{l}} > \frac{1}{C l^{\lambda +2}} \quad ( k, l \in \mathbb{Z}, \ l  > 0).
		\end{equation*}
	We see from this that
		\begin{equation*}
			\frac{1}{\abs{\alpha l + k}} 
			<
			C \abs{l}^{\lambda +1}
		\end{equation*}
		for $k, l \in \mathbb{Z}$ with $(k,l) \neq (0,0)$,
		which deduces  
		from $C \abs{l}^{\lambda +1}< C ( \abs{l}^{\lambda +1} +1) \leq C ( \abs{l}^{(\lambda +1)/2} +1)^2$
		that
		\begin{equation*}
			\frac{1}{\abs{\alpha l + k}^{1/2}} 
			<
			C^{1/2} ( \abs{l}^{(\lambda +1)/2} +1).
		\end{equation*}		
	Thus, by (\ref{3.lemma1aa}), we have
		\begin{eqnarray*}
			& &
			LHS
			\\
			& \leq &
			B
			\Bigl(
				4C(\abs{m'}^{(\lambda +1)/2} + 1)(\abs{m}^{(\lambda +1)/2} + 1)
				+
				2C(\abs{m'}^{\lambda +1} + \abs{m}^{\lambda +1} + 2)
			\Bigr)
			\\
			&\leq&
			8BC \Bigl(
				\abs{m'}^{(\lambda +1)/2} \abs{m}^{(\lambda +1)/2}
				+
				\abs{m'}^{\lambda +1} + \abs{m}^{\lambda +1} +1
				\Bigr)
			\\
			& \leq &
			8BC \Bigl(
				\frac{1}{\{2(\lvert \tau \rvert - \lvert \mathrm{Re}\ \tau \rvert)\}^{(\lambda+1)/2}}
				+
				\frac{2}{\mathrm{Im}(\tau)^{\lambda + 1}}
				+
				C'
				\Bigr) 
				\abs{\tau m' +m}^{\lambda + 1}
		\end{eqnarray*}
		where $C'$ is the maximum of $\{ 1/(\tau k' + k)  \vert (k', k) \in \mathbb{Z}^2 \setminus (0,0) \}$.
 	Because $B, C$ and $C'$ are dependent to only $y', y, \lambda, \alpha, \tau$,
		we obtain (\ref{5.lemma1a}) in case of (i).
		
	We consider the case (ii).
	We can easily obtain (\ref{5.lemma1a}) if $(m', N_1', N_2) = (0, 0, 0)$,
		so we assume that  $(m', N_1', N_2) \neq (0, 0, 0)$.
	Let $[x]$ denote the integer part of a real number $x$, i.e., $0 \leq x- [x] < 1$.
	Suppose that  $m' = 0$. 
	Then $ -\alpha m' \notin [N_1', -1], - \alpha m' \notin [1, N_2']$, $- \alpha m' \in [0,0]$.
	Set $g_{n',n}:= \e(n'y' + ny)/ (\alpha (\tau m' +m) + \tau n' +n)$.
	We have
		\begin{eqnarray*}
			\Abs{
				\sum_{n' = N_1'}^{N_2'} \sum_{n = N_1}^{N_2} g_{n',n}
			}
		\leq
			 \Abs{
			 	\sum_{n' = N_1'}^{-1} \sum_{n = N_1}^{N_2} g_{n',n}
			 }
			 +
			 \Abs{
			 	\sum_{n' = 1}^{N_2'} \sum_{n = N_1}^{N_2} g_{n',n}
			 }
			 +
			 \Abs{
			 	\sum_{n = N_1}^{N_2} g_{0,n}
			 }
		\end{eqnarray*}
		where empty sums mean $0$. 
	We see from the proof of (i) that the first and second terms in the last formula
		have the upper bound $\lvert \tau m'+m \rvert^{\lambda+1}$ up to constants depending only $y',y,\lambda, \alpha, \tau$,
		and from Siegel's way \cite[pp. 31--32]{siegel1} (which is also used in \cite[proof of Lemma 2]{katayama1}) that
		the third term 
		$\lvert \sum_{n = N_1}^{N_2} g_{0,n} \rvert = \lvert \sum_{n = N_1}^{N_2} \e(ny)/(m+n) \rvert $
		have the upper bound $\lvert m \rvert^{\lambda+1}$ up to constant depending only $y, \lambda$.
	Therefore we verify the case (ii) with $m'=0$.
	If $m' \neq 0$, then 
		$-\alpha m' \notin [N_1', [-\alpha m']]$ and $-\alpha m' \notin [ [-\alpha m'] +1, N_2']$,
		thus
		we can prove this case as same as the above case that $m'=0$.
	Therefore  we completes the proof of (\ref{5.lemma1a}) in case of (ii).

	We can prove similarly the other cases (iii) and (iv) as the case (ii), thus we omit the proofs.
\end{proof}

In the end of the section, We prove Proposition \ref{5.Proposition1}.
\begin{proof}[Proof of Proposition \ref{5.Proposition1}]
	Since $\mathrm{Re}\ s > 3$, there are positive real numbers $\lambda$ and $s_0$ 
		with $\mathrm{Re}\ s - \lambda -1 > s_0>2$.
	Let $\epsilon$ be an arbitrary positive real number.
	Since the series $\displaystyle{} \sideset{}{^\prime}\sum\limits_{m', m} \dfrac{1}{\abs{\tau m' +m}^{s_0}}$
		converges, there is a positive integer $M$ satisfying 
		\begin{equation*}
			\sum_{m' \geq M \atop or\ m \geq M}   \dfrac{1}{\abs{\tau m' +m}^{s_0}}
			 < 
			 \epsilon
		\end{equation*}
	where the summation runs over $(m', m) \in \mathbb{Z}$ such that $m' \geq M$ or $m \geq M$.
	Moreover since the series $\displaystyle{} \sideset{}{_e^\prime}\sum\limits_{n', n} \dfrac{\e (n'y' + ny)}{\alpha (\tau m' + m) +\tau n' +n}$ converges
		for every $(m', m) \in \mathbb{Z}^2 \setminus (0, 0)$,
		there is a positive integer $N_0$ such that
		\begin{equation*}
			 \quad
			\Abs{\sum_{\abs{n'}  \geq N \atop or\  \abs{n} \geq N} 
				\frac{\e(n'y' + ny)}{\alpha (\tau m' + m) +\tau n' +n}} < \epsilon
		\end{equation*}
		if $N > N_0$ and $(m', m) \in \{ \mathbb{Z}^2 \setminus (0,0) \  \vert \ \abs{m'}, \abs{m} \leq M \}$.
	Thus we find from (\ref{5.lemma1a}) that
		there is a positive real number $D$ not depending on $N$ and $\epsilon$ such that
 		\begin{eqnarray*}\label{5.proposition1aa}
			& &
			\Abs{
				\Bigl( 
				\sideset{}{_e^\prime}\sum_{m', m} \sideset{}{_e^\prime}\sum_{n', n} -\sideset{}{^\prime}
					\sum_{\abs{n'}< N \atop and\  \abs{ n} <N} \sideset{}{_e^\prime}\sum_{m', m}
				\Bigr)
				\frac{\e(m' x' + mx)}{(\tau m' + m)^s} 
					\frac{\e(n'y' + ny)}{\alpha (\tau m' + m) +\tau n' +n}
			}\\
			&= &
			\Abs{
				\Bigl(
					\sum_{\abs{m'}  \geq M \atop or\  \abs{m} \geq M} 
					+
					\sideset{}{^\prime}\sum_{\abs{m'}< M \atop and\  \abs{ m} <M}
				\Bigr)
				\sum_{\abs{n'}  \geq N \atop or\  \abs{n} \geq N} 
					\frac{\e(m' x' + mx)}{(\tau m' + m)^s} 
					\frac{\e(n'y' + ny)}{\alpha (\tau m' + m) +\tau n' +n}
			}\\
			& \leq &
			D \sum_{\abs{m'}  \geq M \atop or\  \abs{m} \geq M} 
				\frac{1}{\abs{\tau m' + m}^s} 
					\abs{\tau m' + m}^{\lambda +1} 
				+
				\epsilon
					\sideset{}{^\prime}\sum_{\abs{m'}< M \atop and\  \abs{ m} <M}
					\frac{1}{\abs{\tau m' + m}^s} \\
			& \leq &
			\epsilon
				\Bigl(
					D + \sideset{}{^\prime}\sum_{m' , m } \frac{1}{\abs{\tau m' + m}^s}
				\Bigr)
		\end{eqnarray*}
		if $N > N_0$.
		By virtue of the arbitrariness of $\epsilon$, we obtain (\ref{5.proposition1a}).
\end{proof}


\section{Proof of Lemma \ref{4.Lemma2}} \label{Sect_Lemma2}
In order to prove Lemma \ref{4.Lemma2},
	we give the following formulae 
	for the elliptic Dedekind-Rademacher sums $S_{1,l} (r,M; \tau)$.	

\begin{theorem}\label{6.Theorem1}
	Let $r $ be a rational number, and $l$ a positive integer.
	The maps $n: \mathbb{Q} \to \mathbb{Z}$ and $d: \mathbb{Q} \to \mathbb{Z}_{>0}$ are
		as (\ref{3.Def_ndNumber}). 
	Let $\mathrm{M}_2 (r)$ be the subset of $\mathrm{M}_2 (\mathbb{R})$ 
		defined by
		\begin{equation}\label{6.definition_SM}
			\mathrm{M}_2 (r)
			:=
			\biggl{\{}
			\begin{pmatrix} 
				x' & x  \\ 
				y' & y
			\end{pmatrix}
			=
			\begin{pmatrix} 
				\vec{x}  \\ 
				\vec{y}
			\end{pmatrix} 
			\in \mathrm{M}_2 (\mathbb{R})
			\bigg{\vert}
			\vec{x}, d(r) \vec{x} - n(r) \vec{y} \notin \mathbb{Z}^2
		\biggr{\}}.
		\end{equation}
	The relation of $\mathrm{M}_2 (V)$ defined in (\ref{4.definition_SM})
		is that $\mathrm{M}_2 (-d/c) = \mathrm{M}_2 (V)$ if $c \neq 0$.
	
	If $V = \bigl(
		\begin{smallmatrix}
		        a  &  b       \\
		        c  &  d    
		\end{smallmatrix} \bigr) \in \mathrm{SL}_2 (\mathbb{Z})$ with
		 $j(V; r) \neq 0$ and 
		 $M  \in \mathrm{M}_2( V) \bigcap \mathrm{M}_2 (r)$, 
		 then 
		\begin{multline}\label{6.theorem1a}
			S_{1,l} (r, M;  \tau)
				- j(V; r)^{l-1}
				S_{1,l} (V r, VM; \tau)\\
				= 
				\frac{(-1)^{l} l!}{(2\pi i)^{l+1}}
				R_V(l, r, M; \tau)
					-
					\frac{(-1)^{l} l}{l+1}
					\frac{c}{j (V;r)}
					\frac{B_{l+1}(d(r) \vec{x} - n(r) \vec{y};\tau)}{d(r)^{l+1}}.
		\end{multline}
\end{theorem}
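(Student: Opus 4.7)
The strategy is to prove (\ref{6.theorem1a}) first for the generators $T^{\pm 1}$ and $S$ of $\mathrm{SL}_2(\mathbb{Z})$ and then extend to all $V$ by an algebraic cocycle argument. Setting $\Phi_V(r,M):=S_{1,l}(r,M;\tau)-j(V;r)^{l-1}S_{1,l}(Vr,VM;\tau)$, the identity $j(V_2V_1;r)=j(V_1;r)j(V_2;V_1r)$ gives at once $\Phi_{V_2V_1}(r,M)=\Phi_{V_1}(r,M)+j(V_1;r)^{l-1}\Phi_{V_2}(V_1r,V_1M)$. Using $\det V_1=1$ one verifies $d(V_1r)=d(r)j(V_1;r)$ (up to sign) and $d(V_1r)(V_1M)_{\mathrm{top}}-n(V_1r)(V_1M)_{\mathrm{bot}}=d(r)\vec{x}-n(r)\vec{y}$, and the purely algebraic identity $\frac{c_1}{j(V_1;r)}+\frac{c_2}{j(V_2;V_1r)j(V_1;r)^2}=\frac{c(V_2V_1)}{j(V_2V_1;r)}$ then shows that the correction term $-\tfrac{(-1)^l l}{l+1}\tfrac{c}{j(V;r)}\tfrac{B_{l+1}(d(r)\vec{x}-n(r)\vec{y};\tau)}{d(r)^{l+1}}$ obeys the same cocycle. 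Consequently, (\ref{6.theorem1a}) on generators implies that $R_V$ satisfies the cocycle on rational $r$, and hence on all $r$ by polynomiality — yielding Lemma \ref{4.Lemma2} as a byproduct and propagating (\ref{6.theorem1a}) to every $V\in\mathrm{SL}_2(\mathbb{Z})$.

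The case $V=T^{\pm 1}$ is immediate: $c=0$ kills $R_V$ and the boundary term, while the invariance $B_m(\vec{u}+\vec{a};\tau)=B_m(\vec{u};\tau)$ for $\vec{a}\in\mathbb{Z}^2$ from (\ref{2.key_formula3}), together with $d(r\pm 1)=d(r)$ and an explicit check on $T^{\pm}M$, gives $S_{1,l}(r\pm 1,T^{\pm}M;\tau)=S_{1,l}(r,M;\tau)$, so the left-hand side vanishes as well.

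The substantial case is $V=S$, where $Sr=-1/r$, $j(S;r)=r$, and $SM=\bigl(\begin{smallmatrix}-\vec{y}\\ \vec{x}\end{smallmatrix}\bigr)$. Here the identity is a genuine reciprocity between $S_{1,l}$ at $r=n(r)/d(r)$ and at $-d(r)/n(r)$, with the right-hand side re-expressing this difference as the convolution $\sum_{k=-1}^{l}\binom{l+1}{k+1}(-r)^{k}S_{k+1,l-k}(d(r)/c,\ldots)$ defining $R_S$ plus an isolated $B_{l+1}$ correction. My plan is to substitute the Fourier/residue expansion (\ref{Sect3.FouExp}) of $B_1$ and $B_l$ into the definition (\ref{2.elliptic_Dedekind-Rademacher_sum}) of $S_{1,l}$ and reorganise the resulting double sum by exchanging the two residue systems modulo $n(r)$ and modulo $d(r)$, in parallel with the classical contour-integration proof of Dedekind reciprocity but with the Kronecker series $\F(\vec{x};X;\tau)$ in place of the cotangent. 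The binomial coefficients $\binom{l+1}{k+1}$ emerge from the Taylor expansion of $\F$ that defines $B_m$ via (\ref{3.definition_B}).

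The main obstacle will be extracting the boundary term $-\tfrac{(-1)^{l}l}{l+1}\tfrac{1}{r}\tfrac{B_{l+1}(d(r)\vec{x}-n(r)\vec{y};\tau)}{d(r)^{l+1}}$ with the correct sign and constant. This isolated $B_{l+1}$ contribution originates from the degenerate $k=-1$ summand of $R_S$, where $B_0\equiv 1$ reduces $S_{0,l+1}$ essentially to a single $B_{l+1}$ value, and it must absorb the contribution of the residue class on which the functions $B_1$ and $B_2$ in (\ref{Sect3.FouExp}) are discontinuous. Tracking the signs via $B_m(-\vec{x};\tau)=(-1)^{m}B_m(\vec{x};\tau)$ from (\ref{2.key_formula3}) and keeping the powers $(-r)^{k}$ aligned with the parity of $l$ is where the bookkeeping will be delicate.
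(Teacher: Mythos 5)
Your architecture inverts the paper's logical order in a way that does not close. The paper proves Theorem \ref{6.Theorem1} \emph{directly for every} $V$ with $c\neq 0$ and $j(V;r)>0$: it introduces the two-variable generating functions $\widehat{S}_{1,l}(r,M;X,Y;\tau)$ of (\ref{8.def_S}), shows in Proposition \ref{6.Proposition1} that the difference of the two sides of the corresponding identity is an entire, doubly quasi-periodic function of $X$ (all candidate poles on $rY+\frac{1}{d(r)}(\tau\mathbb{Z}+\mathbb{Z})$ and on $-\frac{d}{c}Y+\frac{1}{c}(\tau\mathbb{Z}+\mathbb{Z})$ cancel), hence vanishes by Liouville's theorem, and then extracts the coefficient of $X^0Y^0$ in Lemma \ref{6.Lemma3} to obtain (\ref{6.theorem1a}); the cocycle property of $R_V$, Lemma \ref{4.Lemma2}, is then a \emph{consequence} of Theorem \ref{6.Theorem1}. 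Your plan runs the other way, and the propagation step is where the gap sits. The cocycle identities you state for $\Phi_V$ and for the boundary term $\frac{c}{j(V;r)}B_{l+1}(d(r)\vec{x}-n(r)\vec{y};\tau)/d(r)^{l+1}$ are indeed correct (they follow from (\ref{6.proof_Lemma4.2aa}), (\ref{6.proof_Lemma4.2bb}) and the logarithmic derivative of $j$), but together with the generator case they only show that the quantity $\widetilde{R}_V:=\frac{(2\pi i)^{l+1}}{(-1)^l l!}\bigl(\Phi_V+\mathrm{corr}_V\bigr)$ satisfies the cocycle and agrees with $R_V$ on $T^{\pm},S$. To conclude (\ref{6.theorem1a}) for general $V$ you must still identify this cocycle extension with the \emph{explicit} $R_V$ of the statement, a sum of elliptic Dedekind--Rademacher sums over residues mod $c$; that identification is precisely Lemma \ref{4.Lemma2}, whose only proof in the paper goes \emph{through} Theorem \ref{6.Theorem1}. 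Your sentence ``(\ref{6.theorem1a}) on generators implies that $R_V$ satisfies the cocycle'' is therefore unsupported, and without an independent proof of the three-term relation (\ref{4.lemma2a}) --- itself a reciprocity statement of comparable depth --- the argument is circular.

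The base case $V=S$ is also only sketched at exactly the point where the analytic content lives. ``Substitute (\ref{Sect3.FouExp}) and exchange the two residue systems'' is the classical Rademacher manipulation, but for the Kronecker function $\F$ the paper needs the full generating-function and Liouville mechanism even in this single case, and the isolated term $-\frac{(-1)^l l}{l+1}\frac{1}{r}B_{l+1}(d(r)\vec{x}-n(r)\vec{y};\tau)/d(r)^{l+1}$ does \emph{not} originate in the $k=-1$ summand of $R_S$: that summand contributes $-\frac{(-1)^l}{(l+1)r}B_{l+1}(\vec{x};\tau)$, a different function with a different constant, whereas in the paper the correction term emerges from the constant-term extraction (\ref{6.lemma3b}) of the \emph{transformed} sum $\widehat{S}_{1,l}(Vr,VM;V\bigl(\begin{smallmatrix}X\\Y\end{smallmatrix}\bigr);\tau)$. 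So both the engine of the proof and the provenance of the correction term are missing; what you have is a correct verification of the $T^{\pm}$ case, a correct but insufficient cocycle computation, and an unproven reciprocity at $V=S$.
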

\begin{remark}
Theorem \ref{6.Theorem1} reproduces a part of Halbritter's result \cite[Theorem 2]{halbritter} when $\tau \rightarrow i \infty$.	
\end{remark}
For any 
	$V = \bigl( 
	\begin{smallmatrix}
	          a & b      \\
	          c & d   
	\end{smallmatrix} \bigr) 
	\in \mathrm{SL}_2 (\mathbb{Z})$,
	set $\tilde{j} (V; r) := \dfrac{c}{j(V;r)} = \dfrac{d}{dr} j(V;r)$.	
Before proving the theorem, 
 	we give the proof Lemma \ref{4.Lemma2} by use of (\ref{6.theorem1a}).
\begin{proof}[Proof of Lemma \ref{4.Lemma2}]
	Because $\det V = ad - bc = 1$ and $Vr = (ar + b)/(cr + d) = (a n(r) + b d(r))/(c n(r) + d d(r) )$, 
		we have
		\begin{equation*}
			\gcd (a n(r) + b d(r), c n(r) + d d(r) ) =1.
		\end{equation*}
	Since $j(V;r)=cr+d=(cn(r)+dd(r))/d(r)$,
		we obtain
		\begin{equation}\label{6.proof_Lemma4.2aa}
			\begin{pmatrix} n(Vr)  \\ d(Vr) \end{pmatrix}
				=
				\mathrm{sgn}(j(V; r)) V \begin{pmatrix} n(r)  \\ d(r) \end{pmatrix}
				\end{equation}
		where $\mathrm{sgn}(j(V; r))$ equals $1$ if $j(V; r) >0$, and $-1$ if $j(V; r) < 0$.
	We also get 	
		\begin{equation}\label{6.proof_Lemma4.2bb}
			n(r) \eta - d(r) \xi
			=
			\mathrm{sgn}(j(V; r)) \bigl( n(Vr) (c\xi +d\eta) - d(Vr) (a\xi +b\eta) \bigr)
		\end{equation}
		for complex numbers $\xi, \eta$ by virtue of
		\begin{equation*} 
			\det
				\begin{pmatrix} n(V r)  &  a\xi +b\eta \\ d(V r)  &  c\xi +d\eta    \end{pmatrix} 
			=
			\mathrm{sgn}(j(V; r))  \det
				 V \begin{pmatrix} n(r)  &  \xi \\ d(r)  &  \eta     \end{pmatrix}.
		\end{equation*}		
	Since
	\begin{eqnarray*}
		0  
		&=& 
		\Bigl{\{} 
				S_{1,l} (r, M;  \tau) -  j(V; r)^{l-1} S_{1,l} (V r, VM; \tau) 
			\Bigr{\}}
		\\
		& &
			- 
			 \Bigl{\{} 
				S_{1,l} (r, M;  \tau) -  j(V_1; r)^{l-1} S_{1,l} (V_1 r, V_1M; \tau)  
		\\
		& &\qquad
				+
				j(V_1; r)^{l-1} 
					\Bigl(
						S_{1,l} (V_1 r, V_1M; \tau)  - j(V_2; V_1 r)^{l-1} S_{1,l} (V_2 V_1 r, V_2 V_1 M; \tau) 
					\Bigr)
			\Bigr{\}},
	\end{eqnarray*}
	we find from (\ref{6.theorem1a}), (\ref{6.proof_Lemma4.2aa}) and (\ref{6.proof_Lemma4.2bb}) that 
		\begin{eqnarray*}\label{6.proof_Lemma4.2cc}
			& & 
			 \frac{(l+1)!}{(2\pi i)^{l+1} l}
			 \Bigl( R_V (l, r,  M; \tau)- R_{V_1} (l, r, M; \tau) 
				- j(V_1, r)^{l-1} R_{V_2} (l, V_1 r, V_1 M ; \tau) \Bigr) \\
			& = &
			\tilde{j} (V; r)  \frac{ B_{l+1}(d(r) \vec{x} - n(r) \vec{y} ;\tau)}{d(r)^{l+1}} 
			-
			\tilde{j} (V_1; r) \frac{ B_{l+1}(d(r) \vec{x} - n(r) \vec{y} ;\tau)}{d(r)^{l+1}} \\
			&  &
				\qquad - j(V_1;r)^{l-1} \tilde{j} (V_2; V_1r) \frac{ B_{l+1}(d(V_1 r) (a_1 \vec{x} + b_1 \vec{y}) - n(V_1 r) (c_1 \vec{x} + d_1 \vec{y}) ;\tau)}{d(V_1r)^{l+1}} \\
			& = &
			 \frac{ B_{l+1}(d(r) \vec{x} - n(r) \vec{y} ;\tau)}{d(r)^{l+1}} 
				\Bigl{(}
					\tilde{j} (V; r) - \tilde{j} (V_1; r) 
						- \frac{1}{j(V_1, r)^{2} } \tilde{j} (V_2; V_1r) 
				\Bigr{)} 
		\end{eqnarray*}
		where $V_1 = \bigl( \begin{smallmatrix} a_1 & b_1 \\ c_1 & d_1\end{smallmatrix} \bigr)$.
	By differentiating $\log j(V;r) = \log j(V_1;r) j(V_2;V_1r)$ on $r$, 
		we obtain 
		\begin{equation*}
			\tilde{j} (V; r) - \tilde{j} (V_1; r)  - \frac{1}{j(V_1, r)^{2} } \tilde{j} (V_2; V_1r) = 0.
		\end{equation*}
	Thus (\ref{4.lemma2a}) follows if $z=r$ is any rational number.
	Since $R_V (l, z, M; \tau)$ are polynomials in $z$,
		(\ref{4.lemma2a}) holds for any complex number $z \in \mathbb{C}$.
\end{proof}
In order to verify (\ref{6.theorem1a}),
	we establish transformation formulae of the functions $\widehat{S}_{1,l}(r, M;   X  ,Y ; \tau)$ defined below
	instead of $S_{1,l} (r,M; \tau)$. 
The coefficients of $X^0, Y^0$ on the functions $\widehat{S}_{1,l}(r, M; X, Y ; \tau)$ 
	nearly equal $S_{1,l} (r,M; \tau)$ (see Lemma \ref{6.Lemma3} below),
	thus the transformation formulae of $\widehat{S}_{1,l}(r, M;  X, Y ; \tau)$ give rise to those of  $S_{1,l} (r,M; \tau)$.
		
Let us define $\widehat{S}_{1,l}(r, M;  X , Y ; \tau)$.
Assume that
	\begin{equation*}
		l \in \mathbb{Z}_{>0}, \quad
		r = n(r)/ d(r) \in \mathbb{Q}, \quad		
		M = \bigl(\begin{smallmatrix} x' & x \\ y' & y \end{smallmatrix} \bigr) 
			= \bigl(\begin{smallmatrix} \vec{x}  \\ \vec{y} \end{smallmatrix} \bigr) 
				\in \mathrm{M}_2 (\mathbb{R})
	\end{equation*}
	and $X, Y$ are complex variables.
If $M \in \mathrm{M}_2 (r)$,
	then $\widehat{S}_{1,l}(r, M; X, Y ; \tau)$ are defined by
	\begin{multline}\label{8.def_S}
		\widehat{S}_{1,l}(r, M;  X  , Y ; r) 
		\\
		:= 
		\frac{1}{d(r)}
			\sum_{j', j (  d(r))}
			\F \Bigl(\frac{\vec{j} + \vec{y}}{d(r)}; n(r) Y -d(r) X;  \tau \Bigr) 
				\F^{(l-1)} \Bigl(n(r) \frac{\vec{j} + \vec{y}}{d(r)} -\vec{x};-Y; \tau \Bigr)
	\end{multline}
	where
	$\F ^{(m)} (\vec{x};X;\tau)$ denotes
	$\dfrac{1}{(2\pi i)^m} \Bigl ( \dfrac{\partial}{\partial X} \Bigr )^m
		\F (\vec{x};X;\tau)$.
	We see from (\ref{2.key_formula1}) that $\widehat{S}_{1,l}(r; M,  X , Y ; r)$ 
		have the following periodicities with respect to $X$ and $Y$:
		\begin{equation}\label{6.periodicity_S}
			\begin{split}
			\widehat{S}_{1,l}(r, M;   X+1  , Y ; \tau)
			& = 
			\e(-y) \widehat{S}_{1,l}(r, M;   X  , Y ; \tau), \\
			\widehat{S}_{1,l}(r, M;  X  + \tau , Y ; \tau)
			& =
			\e(-y') \widehat{S}_{1,l}(r, M;   X  , Y ; \tau), \\			
			\widehat{S}_{1,l}(r, M;  X,  Y+1 ; \tau)
			& =
			\e(x) \widehat{S}_{1,l}(r, M;  X  , Y ; \tau),  \\
			\widehat{S}_{1,l}(r, M;  X  , Y+\tau ; \tau)
			& =
			\e(x') \widehat{S}_{1,l}(r, M;   X  , Y ; \tau). 
			\end{split}
		\end{equation}

For the purpose of establishing 
	 transformation formulae of
	$\widehat{S}_{1,l}(r, M;  X , Y ; \tau)$,
	we also need functions 
	$\widehat{R}_V (l, r, M;  X  , Y  ; \tau)$
	whose coefficients of $X^0,Y^0$ nearly equal $R_V (l, r, M; \tau)$ (see Lemma \ref{6.Lemma3}):
If $V = \bigl( 
	\begin{smallmatrix}
	        a  &  b       \\
	        c  &  d    
	\end{smallmatrix} \bigr) \in \mathrm{SL}_2 (\mathbb{Z})$
	and $M 
		= \bigl( \begin{smallmatrix} \vec{x}  \\ \vec{y}\end{smallmatrix} \bigr)  \in \mathrm{M}_2( V)$, 
	then the functions $\widehat{R}_V (l, r, M; X  , Y ; \tau)$ are defined by
	\begin{multline} \label{6.definition_R}
		\widehat{R}_V (l, r, M; X  , Y ; \tau)\\
		:=
		\begin{cases} \displaystyle{}
			(-1)^l \sum_{k=0}^{l-1}
			\binom{l-1}{k} 
			\bigl(
				- j (V, r)\bigr)^k
				\widehat{S}_{k+1,l-k} \biggl(\frac{d}{c}, \bigl(\begin{smallmatrix} -\vec{x}  \\ \vec{y} \end{smallmatrix} \bigr); X  , Y ; \tau \biggr) 
			& ( c \neq 0), \\
			0 & (c =0),
		\end{cases}
	\end{multline}
	where $\widehat{S}_{k+1,l-k} \biggl(\dfrac{d}{c},\bigl(\begin{smallmatrix} -\vec{x}  \\ \vec{y} \end{smallmatrix} \bigr); X,  Y ; \tau \biggr) $ 
	denotes
	\begin{equation}
		\frac{1}{c} \sum_{j', j (  \abs{c})} 
			\F^{(k)} (\frac{\vec{j} +\vec{y}}{c}; -cX -dY ;\tau)
			\F^{(l-1-k)} (d\frac{\vec{j} +\vec{y}}{c} + \vec{x}; Y ;\tau)
	\end{equation}
	and the summation takes over all elements in $(\mathbb{Z} / \abs{c} \mathbb{Z})^2$.
We see from (\ref{2.key_formula2}) that
	\begin{equation}
		\widehat{R}_{-V} (l, r, M; X  , Y ; \tau)
		=
		\widehat{R}_V (l, r, M; X  , Y ; \tau).
	\end{equation}
For a proof of the transformation formulae of $\widehat{S}_{1,l}(r, M; X , Y ; \tau)$,
we prepare two lemmas.
\begin{lemma}\label{6.Lemma1}
	Assume that $r = n(r)/ d(r) \in \mathbb{Q}$,
		$\bigl(\begin{smallmatrix} a  &  b  \\ c  &  d    \end{smallmatrix} \bigr) \in \mathrm{SL}_2(\mathbb{Z})$
		and
		$\bigl(\begin{smallmatrix} m'  &  n' \\m  &  n     \end{smallmatrix} \bigr)$,
		$\bigl(\begin{smallmatrix} m_0'  &  n_0' \\ m_0  &  n_0  \end{smallmatrix} \bigr) \in \mathrm{M}_2(\mathbb{Z})$.
	If $\bigl( \begin{smallmatrix} m'  &  n' \\ m  &  n    \end{smallmatrix} \bigr) 
		=  \bigl( \begin{smallmatrix} m_0'  &  n_0' \\ m_0 &  n_0  \end{smallmatrix} \bigr)
			\bigl( \begin{smallmatrix} a  &  b \\ c  &  d    \end{smallmatrix} \bigr)$,
		then 
		\begin{equation}\label{6.lemma1a}
			r (\tau m' + m) + \tau n' + n 
			=
			(ar +b) (\tau m_0' +m_0) + (cr +d) (\tau n_0' +n_0).
		\end{equation}
	In particular, we obtain the equalities of lattice
		\begin{equation}\label{6.lemma1b}
			\begin{split}
			\frac{1}{d(r)} (\tau \mathbb{Z} +\mathbb{Z}) 
			& =
			r (\tau \mathbb{Z} +\mathbb{Z}) + \tau \mathbb{Z} +\mathbb{Z} \\
			& =
			(ar +b)  (\tau \mathbb{Z} +\mathbb{Z}) 
				+
				(c r +d)  (\tau \mathbb{Z} +\mathbb{Z}).
			\end{split}
		\end{equation}
\end{lemma}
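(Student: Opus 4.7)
The plan is entirely computational: verify the scalar identity (\ref{6.lemma1a}) by substituting the matrix product, then read off the two lattice identities (\ref{6.lemma1b}) as consequences. First I would expand the product $\bigl(\begin{smallmatrix} m' & n' \\ m & n \end{smallmatrix}\bigr) = \bigl(\begin{smallmatrix} m_0' & n_0' \\ m_0 & n_0 \end{smallmatrix}\bigr)\bigl(\begin{smallmatrix} a & b \\ c & d \end{smallmatrix}\bigr)$ entrywise to obtain $m' = a m_0' + c n_0'$, $n' = b m_0' + d n_0'$, $m = a m_0 + c n_0$, $n = b m_0 + d n_0$. Plugging these into the left-hand side $r(\tau m' + m) + \tau n' + n$ and collecting the $(\tau m_0' + m_0)$ and $(\tau n_0' + n_0)$ parts, the coefficients collapse exactly to $ar + b$ and $cr + d$, which gives (\ref{6.lemma1a}).

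For the second equality in (\ref{6.lemma1b}), I would use that $V = \bigl(\begin{smallmatrix} a & b \\ c & d \end{smallmatrix}\bigr) \in \mathrm{SL}_2(\mathbb{Z})$ is invertible over $\mathbb{Z}$, so right-multiplication by $V$ is a bijection of $\mathrm{M}_2(\mathbb{Z})$. Hence as $(m_0', n_0', m_0, n_0)$ ranges over $\mathbb{Z}^4$, so does $(m', n', m, n)$. Combined with (\ref{6.lemma1a}), the set $r(\tau\mathbb{Z} + \mathbb{Z}) + \tau\mathbb{Z} + \mathbb{Z}$, described as all values of the left-hand side, coincides with the set $(ar+b)(\tau\mathbb{Z} + \mathbb{Z}) + (cr+d)(\tau\mathbb{Z} + \mathbb{Z})$ described by the right-hand side.

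For the first equality in (\ref{6.lemma1b}), write $r = n(r)/d(r)$ with $\gcd(n(r), d(r)) = 1$. The coefficient of $\tau$ in $r(\tau m' + m) + \tau n' + n$ is $rm' + n' = (n(r)m' + d(r)n')/d(r)$, which by Bezout ranges over $\frac{1}{d(r)}\mathbb{Z}$ as $(m', n')$ runs over $\mathbb{Z}^2$; the constant coefficient is $rm + n = (n(r)m + d(r)n)/d(r)$, which ranges over $\frac{1}{d(r)}\mathbb{Z}$ independently. Therefore the middle set equals $\frac{1}{d(r)}(\tau\mathbb{Z} + \mathbb{Z})$.

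I do not foresee any genuine obstacle: the argument is elementary linear algebra over $\mathbb{Z}$, and the only care needed is to notice that (\ref{6.lemma1a}) is precisely the algebraic identity that makes both parts of (\ref{6.lemma1b}) fall out upon varying the integer parameters.
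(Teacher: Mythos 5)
Your proposal is correct and follows exactly the paper's (very terse) argument: a direct entrywise computation for (\ref{6.lemma1a}), the coprimality of $n(r)$ and $d(r)$ via Bezout for the first lattice equality, and the invertibility of $V$ over $\mathbb{Z}$ (i.e.\ $\det V = 1$) combined with (\ref{6.lemma1a}) for the second. You have simply written out the details that the paper leaves implicit.
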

\begin{proof}
	A direct calculation verifies (\ref{6.lemma1a}).
	The first equality in (\ref{6.lemma1b}) follows from 
		the fact that $\gcd (n(r), d(r)) =1$.
	The second from (\ref{6.lemma1a}) and 
		$\mathrm{det} \bigl(\begin{smallmatrix} a  &  b \\ c  &  d  \end{smallmatrix} \bigr) = 1$.
\end{proof}
\begin{lemma}\label{6.Lemma2}
	Let $\vec{g} \cdot \vec{h} := g'h' + gh$ be the inner product
		for two vectors $\vec{g} = (g', g)$ and $\vec{h} = (h', h) \in \mathbb{R}^2$.
	Assume that $c \in \mathbb{Z} \setminus \{0\}$, $l \in \mathbb{Z}_{\geq 0}$, 
		and $\vec{i} = (i', i) \in \mathbb{Z}^2$.
	Then we have
		\begin{eqnarray}
			& &
			\F^{(l)} (\vec{x}; X + \frac{\tau i' +i}{c};\tau)
			=
			c^{l-1} \sum_{j', j ( \abs{c})} 
				\e(\vec{i}\cdot  \frac{\vec{j}+\vec{x}}{c})
				\F^{(l)}(\frac{\vec{j}+\vec{x}}{c}; cX; \tau) , \label{6.lemma2a} 
			\qquad
			\\
			& &
			B_m (\vec{x}; \tau)
			=
			c^{m-2} \sum_{j', j ( \abs{c})} 
				B_m (\frac{\vec{j}+\vec{x}}{c}; \tau), \label{6.lemma2b}
		\end{eqnarray}
		where the summations take over all elements in $(\mathbb{Z} / \abs{c} \mathbb{Z})^2$.
\end{lemma}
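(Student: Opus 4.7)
The plan is to reduce both identities to applications of character orthogonality on $(\mathbb{Z}/\abs{c}\mathbb{Z})^2$ via the Kronecker--Eisenstein series representations
\[
\F^{(l)}(\vec z; W; \tau) = \frac{(-1)^l l!}{(2\pi i)^l} \sideset{}{_e}\sum_{m',m} \frac{\e(-m'z' - mz)}{(W + \tau m' + m)^{l+1}},
\qquad
B_m(\vec z; \tau) = -\frac{m!}{(2\pi i)^m} \sideset{}{_e^\prime}\sum_{k',k} \frac{\e(k'z' + kz)}{(\tau k' + k)^m},
\]
which already appear in the proof of Lemma \ref{4.Lemma1}, together with the standard relation $\sum_{j',j \,(\,\abs{c}\,)} \e((k'j' + kj)/c) = c^2$ when $c \mid k'$ and $c \mid k$, and $0$ otherwise.

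For (\ref{6.lemma2a}), I would substitute $W = X + (\tau i' + i)/c$ into the series for $\F^{(l)}(\vec x; W; \tau)$ and pull a factor of $c$ out of each denominator. Reindexing via $n' = cm' + i'$, $n = cm + i$ recasts the left-hand side as a single series over the sublattice $(n',n) \equiv (i',i) \pmod c$, with phase $\e(\vec i \cdot \vec x / c)\,\e(-(n'x' + nx)/c)$. On the right, I would expand each $\F^{(l)}((\vec j + \vec x)/c;\, cX;\, \tau)$ in the same way and swap the finite sum over $(j',j) \pmod{\abs{c}}$ with the Eisenstein sum; the inner character sum $\sum_{j',j \,(\,\abs c\,)} \e(((i'-n')j' + (i-n)j)/c)$ vanishes unless $(n',n) \equiv (i',i) \pmod c$, in which case it contributes $c^2$. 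Combined with the prefactor $c^{l-1}$, the two single series match termwise once the phases are carefully tracked.

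The identity (\ref{6.lemma2b}) then drops out of the special case $(l,\vec i) = (0,\vec 0)$ of (\ref{6.lemma2a}), namely
\[
\F(\vec x;X;\tau) = \frac{1}{c}\sum_{j',j \,(\,\abs{c}\,)} \F\!\left(\tfrac{\vec j + \vec x}{c};\, cX;\, \tau\right),
\]
after expanding both sides as Laurent series in $X$ around $X = 0$ via the generating function (\ref{3.definition_B}) and comparing the coefficient of $X^{m-1}$. The factor $c^{m-2}$ arises as $c^{-1}$ (from the prefactor) times $c^{m-1}$ (from the scaling $X \mapsto cX$).

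The main obstacle is the conditional convergence of the Eisenstein series when $l = 0$ (and when $m = 1, 2$ in the series for $B_m$), which makes the interchange of the finite $(j',j)$-sum with the infinite sum delicate. I would handle this by truncating each Eisenstein sum to the symmetric box $\abs{m'}, \abs{m} \leq N$, performing the character orthogonality inside the truncation, and letting $N \to \infty$ using tail bounds of the same type as those in the proof of Lemma \ref{3.Lemma1}; a boundary estimate is needed because the reindexing $(m',m) \mapsto (n',n)$ turns a symmetric box into a shifted box, but the discrepancy is bounded in $c$ and contributes zero in the limit. A cleaner alternative for the $l = 0$ case of (\ref{6.lemma2a}) is a Liouville-style argument: by (\ref{2.key_formula1}) both sides share the same quasi-periodicities $(\e(x), \e(x'))$ under $X \mapsto X + 1$ and $X \mapsto X + \tau$, and a residue computation via character orthogonality shows that on a fundamental parallelogram all potential poles of the right-hand side cancel except the one at $X = -(\tau i' + i)/c$, where the singular part agrees with that of the left-hand side; the difference is then a holomorphic section of a non-trivial quasi-periodic line bundle on $\mathbb{C}/(\mathbb{Z} + \tau \mathbb{Z})$, hence identically zero.
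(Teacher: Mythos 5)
Your proof is correct, but your primary argument takes a genuinely different route from the paper's. The paper never expands $\F$ as a Kronecker--Eisenstein series here: it forms the auxiliary function $f(z) = \F(\vec{x}; -z+X;\tau)\,\F(\tfrac{\vec{j}+\vec{x}}{c}; cz;\tau)$, observes via (\ref{2.key_formula1}) that $f$ is genuinely doubly periodic for the lattice $\tau\mathbb{Z}+\mathbb{Z}$ with simple poles only on $\tfrac{\tau}{c}\mathbb{Z}+\tfrac{1}{c}\mathbb{Z}$ and $X+\tau\mathbb{Z}+\mathbb{Z}$, and applies the vanishing of the sum of residues over a period parallelogram to obtain the finite inversion formula $\tfrac{1}{c}\sum_{k',k(\abs{c})}\F(\vec{x};X+\tfrac{\tau k'+k}{c};\tau)\,\e(-k'\tfrac{j'+x'}{c}-k\tfrac{j+x}{c}) = \F(\tfrac{\vec{j}+\vec{x}}{c};cX;\tau)$; summing over $j',j$, using the same character orthogonality you invoke, then translating $X$ by $\tfrac{\tau i'+i}{c}$ via (\ref{2.key_formula1}) and differentiating $l$ times in $X$, yields (\ref{6.lemma2a}). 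This sidesteps entirely the conditional-convergence issue that is the one weak point of your termwise-reindexing argument: at $l=0$ the discrepancy between the symmetric box in $(m',m)$ and the shifted, $c$-scaled box in $(n',n)$ is a boundary strip of $O(N)$ terms each of size $O(1/N)$, so its vanishing is not automatic --- it requires Abel summation against the characters $\e(-m'x')$, $\e(-mx)$, with extra care when $x$ or $x'$ happens to be an integer (only $(x',x)\notin\mathbb{Z}^2$ is guaranteed). Your Liouville-style fallback closes that gap rigorously and lives in the same circle of ideas as the paper's residue argument, just organized as a direct comparison of the two quasi-periodic sides rather than via an auxiliary product of two Kronecker functions; either version is acceptable, but you should treat the Liouville argument (or the paper's residue trick) as the actual proof for $l=0$ rather than as an optional alternative. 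Your derivation of (\ref{6.lemma2b}) from the $l=0$, $\vec{i}=\vec{0}$ case by comparing coefficients of $X^{m-1}$ in (\ref{3.definition_B}), with the factor $c^{m-2}=c^{-1}\cdot c^{m-1}$, is exactly what the paper does.
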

\begin{proof} 
	Suppose that $\vec{j} = (j',j) \in \mathbb{Z}^2$ and 
 		$X \notin \dfrac{\tau}{c} \mathbb Z + \dfrac{1}{c}\mathbb Z$.
	Set
		\begin{equation*}
			f(z) = \F (\vec{x}; -z +X; \tau) \F(\frac{\vec{j}+\vec{x}}{c};cz;\tau).
		\end{equation*}
	We find from (\ref{2.key_formula1}) that
		the function $f(z)$ is a doubly periodic function on $\tau \mathbb{Z} +\mathbb{Z}$,
		and from the positions of the poles of $\F (\vec{x}; X; \tau)$
		that
		$f(z)$ has the only simple poles on the lattices
 		$\dfrac{\tau}{c} \mathbb Z + \dfrac{1}{c}\mathbb Z$
 		and $X + \tau \mathbb Z + \mathbb Z$.
	Since the sum of the residues of $f(z)$ at its poles in 
 		any period parallelogram equals zero,
 		we have
		\begin{equation*}
			\frac{1}{c} \sum_{k', k (   \abs{c})} 
			\F(\vec{x}; X +\frac{\tau k' +k}{c};\tau) \e(-k'\frac{j'+x'}{c}-k\frac{j+x}{c})
			=
			\F (\frac{\vec{j}+\vec{x}}{c}; cX;\tau).
		\end{equation*}
	By adding each side of the above equation for $j', j =0, 1, \ldots, \abs{c}-1$,
		we obtain
		\begin{equation*}
			c \F (\vec{x}; X ;\tau)
			=
 			\sum_{j', j (   \abs{c})} \F(\frac{\vec{j}+\vec{x}}{c}; cX; \tau)
		\end{equation*}
		because $\sum\limits_{j = 0}^{\abs{c}-1} \e ( -k \dfrac{j}{\abs{c}} )$ equals $\abs{c}$ if $k$ divides $\abs{c}$, 
			or equals $0$ otherwise. 
	Replacing $X$ by $X + \dfrac{\tau i' +i}{c}$ and (\ref{2.key_formula1})
		imply (\ref{6.lemma2a}).
	The second equation (\ref{6.lemma2b}) follows from (\ref{6.lemma2a}) with $i' = i =0$ 
		and (\ref{3.definition_B}).
\end{proof}

For describing the transformation formulae of
	$\widehat{S}_{1,l}(r, M;  X , Y ; \tau)$,
	we use the following notations 
	which are only different in the description of variables $X$ and $Y$:
	\begin{equation}
		\begin{split}
			&
			\widehat{S}_{1,l}(r, M;  \begin{smallmatrix} X \\ Y \end{smallmatrix} ; \tau)
			:=
			\widehat{S}_{1,l}(r, M;  X , Y ; \tau), \\
			&
			\widehat{R}_V (l, r, M; \begin{smallmatrix} X  \\ Y \end{smallmatrix} ; \tau)
			:=
			\widehat{R}_V (l, r, M; X  , Y ; \tau)
		\end{split}
	\end{equation}
	
The transformation formulae are the following.
\begin{proposition}\label{6.Proposition1}
	Let
		$ l \in \mathbb{Z}_{>0}$, 
		$r = n(r)/ d(r) \in \mathbb{Q}$,
		$V \in \mathrm{SL}_2 (\mathbb{Z})$,
		$M \in\mathrm{M}_2( V) \bigcap \mathrm{M}_2 (r)$,
		and $X, Y \in \mathbb{C}$.
	If $j(V; r) > 0$,
		then we have
		\begin{equation}\label{6.proposition1a}
			\widehat{S}_{1,l}(r, M;  \begin{smallmatrix} X \\ Y \end{smallmatrix} ; \tau)
				- j(V; r)^{l-1} \widehat{S}_{1,l}(V r, V M;  V \bigl( \begin{smallmatrix} X \\ Y \end{smallmatrix} \bigr) ; \tau)
			=
			\widehat{R}_V (l, r, M; \begin{smallmatrix} X  \\ Y \end{smallmatrix} ; \tau).
		\end{equation}	
	Here the maps $n, r$ and the sets $\mathrm{M}_2( V)$, $\mathrm{M}_2 (r)$ 
		are as (\ref{3.Def_ndNumber}), (\ref{4.definition_SM}) and (\ref{6.definition_SM}).  
\end{proposition}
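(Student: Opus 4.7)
My plan is to prove Proposition~\ref{6.Proposition1} by reducing to the generators $T$ and $S$ of $\mathrm{SL}_2(\mathbb{Z})$ via a cocycle argument and handling each generator case.

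\textbf{Cocycle reduction.} I first verify that $\widehat{R}_V$ satisfies the cocycle identity
\[
\widehat{R}_V(l,r,M;X,Y;\tau) = \widehat{R}_{V_1}(l,r,M;X,Y;\tau) + j(V_1;r)^{l-1}\widehat{R}_{V_2}\bigl(l, V_1 r, V_1 M; V_1 \bigl(\begin{smallmatrix} X \\ Y \end{smallmatrix}\bigr);\tau\bigr)
\]
for $V = V_2 V_1$ with $j(V_1;r)>0$ and $j(V_2;V_1 r)>0$, by expanding definition~(\ref{6.definition_R}) and collecting coefficients of each $\widehat{S}_{k+1,l-k}$. Given this, summing two instances of (\ref{6.proposition1a}) shows that if (\ref{6.proposition1a}) holds for $V_1$ and for $V_2$ at $(V_1 r, V_1 M)$, then it holds for $V = V_2 V_1$. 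An induction on the word length of $V$ in $\{T, T^{-1}, S\}$, in the spirit of the proof of Theorem~\ref{4.Theorem1}, then reduces the problem to the three generator cases.

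\textbf{Case $V = T^{\pm 1}$.} Here $c = 0$, so $\widehat{R}_{T^{\pm 1}} = 0$. With $Tr = r+1$, $d(Tr) = d(r)$, $n(Tr) = n(r) + d(r)$, $TM = \bigl(\begin{smallmatrix} \vec{x}+\vec{y} \\ \vec{y} \end{smallmatrix}\bigr)$, and $T\bigl(\begin{smallmatrix} X \\ Y \end{smallmatrix}\bigr) = \bigl(\begin{smallmatrix} X+Y \\ Y \end{smallmatrix}\bigr)$, direct substitution into (\ref{8.def_S}) simplifies the arguments of the two $\F$-factors using only the $\mathbb{Z}^2$-periodicity $\F(\vec{x}+\vec{a}; X; \tau) = \F(\vec{x}; X; \tau)$ from (\ref{2.key_formula2}), yielding $\widehat{S}_{1,l}(r, M; X, Y; \tau) = \widehat{S}_{1,l}(Tr, TM; X+Y, Y; \tau)$. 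The case $V = T^{-1}$ follows by the same substitution.

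\textbf{Case $V = S$, the main step.} Here $Sr = -1/r$, $j(S;r) = r > 0$, $n(Sr) = -d(r)$, $d(Sr) = n(r)$, and $SM = \bigl(\begin{smallmatrix} -\vec{y} \\ \vec{x} \end{smallmatrix}\bigr)$; since $c = 1$ in $\widehat{R}_S$, the defining sum in (\ref{6.definition_R}) has only the single term $\vec{j} = (0,0)$, so
\[
\widehat{R}_S(l, r, M; X, Y; \tau) = (-1)^l \sum_{k=0}^{l-1} \binom{l-1}{k} (-r)^k \F^{(k)}(\vec{y}; -X; \tau) \F^{(l-1-k)}(\vec{x}; Y; \tau).
\]
My approach is to apply Lemma~\ref{6.Lemma2} to rewrite the first $\F$-factor of each of $\widehat{S}_{1,l}(r,M;X,Y;\tau)$ and $\widehat{S}_{1,l}(Sr, SM; -Y, X; \tau)$ as a distribution over a common refined lattice --- this is possible because (\ref{6.lemma1b}) identifies $r(\tau\mathbb{Z} + \mathbb{Z}) + \tau\mathbb{Z} + \mathbb{Z}$ with $\tfrac{1}{d(r)}(\tau\mathbb{Z} + \mathbb{Z})$. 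After re-indexing the two finite sums against this common lattice, the bulk cancels and the residual finite expression should match $\widehat{R}_S$ via an elliptic analog of the rational partial-fraction identity
\[
\frac{1}{X^l(rX + Y)} - r^{l-1} \frac{1}{(-Y)^l(Y/r + X)} = \sum_{k=0}^{l-1} (-r)^k \frac{1}{Y^{k+1} X^{l-k}}
\]
used in the proof of Lemma~\ref{4.Lemma1}, now applied pointwise to values of $\F$ and its derivatives. The main obstacle is identifying exactly which terms survive the cancellation, tracking signs through $(-Y)^l$ and $r^{l-1}$, and matching the derivative orders in $\F^{(k)}$ with the binomial coefficients; a careful residue analysis of $\F$ at its simple poles on $\tau\mathbb{Z} + \mathbb{Z}$ should pin these down.
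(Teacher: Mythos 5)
Your architecture differs fundamentally from the paper's, and it contains two gaps that are not cosmetic. First, the cocycle reduction presupposes the identity $\widehat{R}_V = \widehat{R}_{V_1} + j(V_1;r)^{l-1}\widehat{R}_{V_2}(l,V_1r,V_1M;V_1\bigl(\begin{smallmatrix}X\\ Y\end{smallmatrix}\bigr);\tau)$, which you claim follows ``by expanding definition (\ref{6.definition_R}) and collecting coefficients of each $\widehat{S}_{k+1,l-k}$.'' It does not: for $V=V_2V_1$ the three functions $\widehat{R}_V$, $\widehat{R}_{V_1}$, $\widehat{R}_{V_2}$ are built from sums over the three different groups $(\mathbb{Z}/\abs{c}\mathbb{Z})^2$, $(\mathbb{Z}/\abs{c_1}\mathbb{Z})^2$, $(\mathbb{Z}/\abs{c_2}\mathbb{Z})^2$, so there are no common terms whose coefficients could be compared; the identity is itself a reciprocity law of the same depth as the proposition. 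Indeed, in the paper the corresponding statement for the specializations $R_V$ (Lemma \ref{4.Lemma2}) is deduced \emph{from} Theorem \ref{6.Theorem1}, which in turn rests on Proposition \ref{6.Proposition1} --- so taking it as an input is circular. There is also a sign problem: in a word decomposition of a $V$ with $j(V;r)>0$, the intermediate factors need not satisfy the positivity hypothesis at the relevant points (e.g.\ $j(S;r)=r$ may be negative), so the induction hypothesis as stated does not apply to them.

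Second, the case $V=S$, which would carry all the content, is left unproven at its crucial step. There is no ``pointwise'' elliptic analogue of the partial-fraction identity you cite: products of the Kronecker function $\F$ and its derivatives do not decompose term by term like rational functions (the correct substitute would be a three-term Fay/Kronecker addition formula, which you do not invoke and which would need its own proof). The paper avoids any such identity entirely. It fixes $Y$ and treats both sides of (\ref{6.proposition1a}) as functions of $X$: by (\ref{2.key_formula1}) and (\ref{6.periodicity_S}) the difference $LR(X)$ satisfies $LR(X+1)=\e(-y)LR(X)$ and $LR(X+\tau)=\e(-y')LR(X)$; one then shows $LR$ is entire by computing the local expansions of $L$ and $R$ at the only two candidate pole lattices, $rY+\frac{1}{d(r)}(\tau\mathbb{Z}+\mathbb{Z})$ and $-\frac{d}{c}Y+\frac{1}{c}(\tau\mathbb{Z}+\mathbb{Z})$, via the distribution relation (\ref{6.lemma2a}) and the Laurent expansion (\ref{6.proposition1ss}), and concludes $LR\equiv 0$ by Liouville's theorem together with the nontrivial multiplier $\e(-y)\neq 1$ for $y\notin\mathbb{Z}$, extending to all $M$ by continuity. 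This handles arbitrary $V$ with $j(V;r)>0$ in one stroke, with no generator decomposition. If you want to salvage your plan, the honest route is to prove the $S$-case by exactly this kind of pole analysis --- at which point the general case is no harder and the reduction becomes unnecessary.
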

\begin{proof}
	Set $V = \bigl( \begin{smallmatrix} a  &  b \\ c  &  d \end{smallmatrix} \bigr)$
		and
		$M = \bigl( \begin{smallmatrix} \vec{x}  \\ \vec{y} \end{smallmatrix} \bigr) 
		=
		\bigl( \begin{smallmatrix} x' & x  \\ y' & y \end{smallmatrix} \bigr)$.
	Let $M$ and $Y$ be fixed.
	We define the function $L(X), R(X)$ and $LR(X)$ by
		\begin{equation*}
			\begin{split}
				L(X)  & := \text{(the left hand side of (\ref{6.proposition1a}))}, \\
				R(X) & := \text{(the right hand side of (\ref{6.proposition1a}))}, \\
				LR(X) & := L(X)- R(X). 
			\end{split}
		\end{equation*}
	We find from (\ref{2.key_formula1}) and (\ref{6.periodicity_S}) that 
		$L(X)$ and $R(X)$, or $LR(X)$, have the quasi periodicity depending on $y', y$ on $\tau \mathbb{Z} +\mathbb{Z}$:
		\begin{equation}\label{6.proposition1aa}
			LR(X+1) =\e(-y) LR(X),\quad
			LR(X+\tau) = \e(-y') LR(X).
		\end{equation}
		
	The goal of the proof is $LR(X) = 0$.
	For the goal, it is enough to prove that $LR(X)$ is an entire function when
		\begin{equation*}
			Y \notin \dfrac{1}{d(V r)}  (\tau \mathbb{Z} +\mathbb{Z}), \quad
			y', y \notin \mathbb{Z}.
		\end{equation*}
	In fact, if it is true, 
		then $LR(X)$ is a bounded function by the periodicity,
		thus $LR(X)$ is a constant function 
		for above $Y, y', y$ because of Liouville's theorem.
	Since $y', y \notin \mathbb{Z}$ and (\ref{6.proposition1aa}), the constant should be equal to $0$. 
	By the continuity of $LR(X)$ as a function of $Y, y', y$,
		we may omit the restrictions of $Y, y', y$.
		
	We will attain the goal.
	We define 
		$M_0
			= \bigl( \begin{smallmatrix} \vec{x_0}  \\ \vec{y_0} \end{smallmatrix} \bigr) 
			= \bigl( \begin{smallmatrix} x_0' & x_0  \\ y_0' & y_0 \end{smallmatrix} \bigr) $
		and
		$\bigl( \begin{smallmatrix} X_0  \\ Y_0 \end{smallmatrix} \bigr) $
		by
	\begin{equation}\label{7.eqn_MXY}
	\begin{split}
		M_0 
		=
		\begin{pmatrix} \vec{x_0}  \\  \vec{y_0} \end{pmatrix}
		&: = 
		V M
		=
		\begin{pmatrix}  a\vec{x}+b\vec{y} \\   c\vec{x} +d\vec{y} \end{pmatrix}
		= 
		\begin{pmatrix} a x' + by' &  ax+by \\ cx' + dy'  &  cx +dy \end{pmatrix},
		\\
		\begin{pmatrix} X_0  \\ Y_0 \end{pmatrix}
		&:= 
		V \begin{pmatrix} X  \\ Y \end{pmatrix} 
		= 
		\begin{pmatrix} a X + b Y \\ c X + d Y \end{pmatrix}.
	\end{split}
	\end{equation}
	Since $j(V;r) > 0$,
		we see from (\ref{6.proof_Lemma4.2aa}) and (\ref{6.proof_Lemma4.2bb}) that 
		\begin{equation}\label{6.proposition1cc}
			\begin{split}
			& 
			\widehat{S}_{1,l}(V r, V M;  V \bigl( \begin{smallmatrix} X \\ Y \end{smallmatrix} \bigr) ; \tau)
			\\
			=&
			\widehat{S}_{1,l}(\frac{n(Vr)}{d(Vr)},
					\begin{pmatrix} \vec{x_0} \\ \vec{y_0} \end{pmatrix};  \begin{pmatrix} X_0 \\ Y_0 \end{pmatrix} ; \tau) 
			\\
			=& 
			\frac{1}{d(V r)} \sum_{j', j (  d(Vr))}
				\F \Bigl(\frac{\vec{j} + \vec{y_0}}{d(Vr)}; n(r) Y -d(r) X;  \tau \Bigr)
			\\
			& \qquad \qquad \qquad \qquad \times
				\F^{(l-1)} \Bigl(n(V r) \frac{\vec{j} + \vec{y_0}}{d(V r)} -\vec{x_0};-cX-dY; \tau \Bigr).
			\end{split}
		\end{equation}
	Therefore it follows from (\ref{8.def_S}), (\ref{6.definition_R}) and (\ref{6.proposition1cc}) 
		that all possible poles of the function $L(X)$ are on the lattices
		\begin{equation*}
			\begin{cases}
	 			r Y + \dfrac{1}{d(r)} (\tau \mathbb{Z} + \mathbb{Z}), \quad
	 				- \dfrac{d}{c} Y + \dfrac{1}{c} (\tau \mathbb{Z} + \mathbb{Z}) & ( c \neq 0),\\
	  			r Y + \dfrac{1}{d(r)} (\tau \mathbb{Z} + \mathbb{Z})  & (c =0),
			\end{cases}
		\end{equation*}
		and that those of the function $R(X)$ are on
		\begin{equation*}
  			- \dfrac{d}{c} Y + \dfrac{1}{c} (\tau \mathbb{Z} + \mathbb{Z})
		\end{equation*}		
		if $c \neq 0$.
	Because $Y \notin \dfrac{1}{d(V r)}  (\tau \mathbb{Z} +\mathbb{Z})$,
		the two lattices
		do not intersect, i.e.,
		\begin{equation*}
			\bigl(r Y + \frac{1}{d(r)} (\tau \mathbb{Z} + \mathbb{Z}) \bigr)
			\cap
			\bigl(- \frac{d}{c} Y + \frac{1}{c} (\tau \mathbb{Z} + \mathbb{Z}) \bigr)
			=
			\phi.
		\end{equation*}
	Therefore, we only have to prove the following two claims for the entireness of $LR(X) = L(X) - R(X)$:\\
	$\mathrm{(i)}$
		$L(X)$ is holomorphic at  $X  \in r Y + \dfrac{1}{d(r)} (\tau \mathbb{Z} + \mathbb{Z})$.\\
	$\mathrm{(ii)}$
		$L(X) - R(X)$ is holomorphic at $X \in - \dfrac{d}{c} Y + \dfrac{1}{c} (\tau \mathbb{Z} + \mathbb{Z})$ if $c > 0$.\\

	Firstly we prove $\mathrm{(i)}$.
	Let $z \in \dfrac{1}{d(r)} (\tau \mathbb{Z} + \mathbb{Z})$.
	By virtue of Lemma \ref{6.Lemma1},
		there are 
		$\bigl(\begin{smallmatrix} m'  &  n' \\m  &  n     \end{smallmatrix} \bigr),
			\bigl(\begin{smallmatrix} m_0'  &  n_0' \\ m_0  &  n_0   \end{smallmatrix} \bigr) \in \mathrm{M}_2(\mathbb{Z})$ 
		such that
		$\bigl( \begin{smallmatrix} m'  &  n' \\ m  &  n    \end{smallmatrix} \bigr) 
		=  \bigl( \begin{smallmatrix} m_0'  &  n_0' \\ m_0 &  n_0  \end{smallmatrix} \bigr)V$
		and
		\begin{equation*}
			\begin{split}
			d(r) z 
			& = n(r) (\tau m' + m) + d(r) (\tau n' + n) \\
			& = n(Vr) (\tau m_0' + m_0) + d(Vr) (\tau n_0' +n_0).
			\end{split}
		\end{equation*}
	Let $\vec{m}, \vec{n}, \vec{m_0}$ and $\vec{n_0}$ mean
		the vectors $(m',m), (n',n), (m_0', m_0)$ and $(n_0', n_0)$ respectively,
		that is, 
		\begin{equation}\label{7.LemProof11}
			\mbox{}^{t} \begin{pmatrix} \vec{m_0} \\ \vec{n_0} \end{pmatrix} V
			= 
			\mbox{}^{t} \begin{pmatrix} \vec{m} \\ \vec{n} \end{pmatrix} 
		\end{equation}
		where the symbol $\mbox{}^{t} \gamma$ means the transpose of any matrix $\gamma$.
	Then we set
		\begin{equation}\label{6.prop.aa}
			w  
			 : =
			-\vec{m}\cdot \vec{x} -\vec{n}\cdot \vec{y} = -\vec{m_0}\cdot \vec{x_0} -\vec{n_0}\cdot \vec{y_0}
				\in \mathbb{C}
		\end{equation}
		where the second equation follows from (\ref{7.eqn_MXY}) and (\ref{7.LemProof11}).
	Let $O$ stand for Landau notation.
	We see that
		\begin{eqnarray*}
			&  & \widehat{S}_{1,l}(r, M;  \begin{smallmatrix} X+r Y + z \\ Y \end{smallmatrix} ; \tau) 
			\\
			& \stackrel{\text{(\ref{8.def_S})}}{=}  &
			\frac{1}{d(r)}
				\sum_{j', j (  d(r))}
					\F \Bigl(\frac{\vec{j} + \vec{y}}{d(r)}; -d(r) (X + z);  \tau \Bigr) 
					\F^{(l-1)} \Bigl( n(r) \frac{\vec{j}  + \vec{y}}{d(r)}- \vec{x} ;-Y; \tau \Bigr) 
			\\
			& \stackrel{\text{(\ref{2.key_formula2})}}{=}  &
			\frac{(-1)^{l}}{d(r)}
				\sum_{j', j (  d(r))}
					\F \Bigl(\frac{\vec{j} + \vec{y}}{d(r)}; -d(r) X - n(r) (\tau m' + m) -  d(r) (\tau n'  + n);  \tau \Bigr)
			\\
			& & \qquad \qquad \qquad \qquad \times
					\F^{(l-1)} \Bigl(  - n(r) \frac{\vec{j}  + \vec{y}}{d(r)} + \vec{x} ;Y; \tau \Bigr) \\
			& \stackrel{\text{(\ref{2.key_formula1})}}{=}  &
			\frac{(-1)^{l}}{d(r)}
				\sum_{j', j (   d(r))} \e \bigl( - \frac{\vec{j} + \vec{y}}{d(r)} \cdot (n(r) \vec{m} + d(r) \vec{n}) \bigr)
			\\
			& & \qquad \qquad \qquad \qquad \times
					\F \Bigl(\frac{\vec{j} + \vec{y}}{d(r)}; -d(r) X ;  \tau \Bigr)
					\F^{(l-1)} \Bigl(  - n(r) \frac{\vec{j}  + \vec{y}}{d(r)} + \vec{x};Y; \tau \Bigr) 
			\\
			& \stackrel{\text{(\ref{3.definition_B})}}{=}  &
			\frac{(-1)^{l-1}\e(w)}{d(r)^2 X} 
				\sum_{j', j (  d(r))}
				\e(\vec{m}\cdot  \frac{-n(r) \vec{j} + d(r) \vec{x} - n(r) \vec{y} }{d(r)} )
			\\
			& & \qquad \qquad \qquad \qquad \times
				\F^{(l-1)} \Bigl( - n(r) \frac{\vec{j}  + \vec{y}}{d(r)} + \vec{x};Y; \tau \Bigr)
				+  O (1).
		\end{eqnarray*}
	Because of $\gcd (n(r), d(r)) = 1$ and (\ref{6.lemma2a}),  
			\begin{eqnarray*}
			&  & 	\widehat{S}_{1,l}(r, M;  \begin{smallmatrix} X+r Y + z \\ Y \end{smallmatrix} ; \tau)
			\\
			& = &
			\frac{(-1)^{l-1}\e(w)}{d(r)^2 X} 
				\sum_{j', j (  d(r))}
				\e(\vec{m}\cdot  \frac{ \vec{j} + d(r) \vec{x} - n(r) \vec{y} }{d(r)} )
			\\
			& & \qquad \qquad \qquad \qquad \times
				\F^{(l-1)} \Bigl( \frac{\vec{j}  + d(r) \vec{x} - n(r) \vec{y} }{d(r)};Y; \tau \Bigr)
				+  O (1)
			\\
			& = &
			\frac{(-1)^{l-1}\e(w)}{d(r)^l X} 
				\F^{(l-1)} \Bigl(  d(r) \vec{x} - n(r) \vec{y} ; \frac{1}{d(r)} Y+ \frac{\tau m' + m}{d(r)}; \tau \Bigr) +  O (1).
		\end{eqnarray*}
	In a similar way, 
 		we find that
		\begin{eqnarray*}
			& & \widehat{S}_{1,l}(V r, V M;  V \bigl( \begin{smallmatrix} X  + r Y + z\ \\ Y \end{smallmatrix} \bigr) ; \tau)
			\\
			&=&
			\frac{(-1)^{l}}{d(V r)} \sum_{j', j (  d(Vr))}
				\F \Bigl(\frac{\vec{j} + \vec{y_0}}{d(Vr)}; -d(r) X - n(Vr) (\tau m_0' + m_0) - d(Vr) (\tau n_0' +n_0);  \tau \Bigr) 
			\\
			& & \qquad \qquad \qquad \qquad  \times
				\F^{(l-1)} \Bigl(- n(V r) \frac{\vec{j} + \vec{y_0}}{d(V r)} + \vec{x_0}; c(X+z) + \frac{d(Vr)}{d(r)} Y ; \tau \Bigr)
			\\
			& = &
			\frac{(-1)^{l-1}\e(w)}{d(r)d(Vr) X} 
				\sum_{j', j (  d(r))}
				\e(\vec{m_0}\cdot  \frac{-n(Vr) \vec{j} + d(Vr) \vec{x_0} - n(Vr) \vec{y_0} }{d(Vr)} )
			\\
			& & \qquad \qquad \qquad \qquad \times
				\F^{(l-1)} \Bigl( - n(Vr) \frac{\vec{j}  + \vec{y_0}}{d(Vr)} + \vec{x_0};cz + \frac{d(Vr)}{d(r)} Y; \tau \Bigr)
				+  O (1).
			\\	
			&=&
			\frac{(-1)^{l-1}\e(w)}{d(r) d(V r)^{l-1} X} 
			\\
			& & \quad \times
				\F^{(l-1)} \Bigl(  d(V r) \vec{x_0} - n(Vr) \vec{y_0} ;  \frac{1}{d(r)} Y + \frac{cz + \tau m_0' + m_0}{d(Vr)}; \tau \Bigr) +  O (1).
		\end{eqnarray*}
	By (\ref{6.proof_Lemma4.2bb}) and (\ref{7.eqn_MXY}) we have $d(V r) \vec{x_0} - n(Vr) \vec{y_0} = d(r) \vec{x} - n(r) \vec{y}$,
	and by some calculations we get
		\begin{equation*}
			j(V;r)^{l-1}
			=
			\frac{d(V r)^{l-1}}{d(r)^{l-1}},
			\qquad
			\frac{\tau m' +m}{d(r)}
			=
			\frac{cz + \tau m_0' +m_0}{d(Vr)}.
		\end{equation*}
	Therefore
		it follows that
		$L(X  + r Y + z) = O(1)$, 
		which prove the claim $\mathrm{(i)}$.
	
	We prove the claim $\mathrm{(ii)}$ next. 
	Let $z = \dfrac{1}{c}(\tau m_0' +m_0)  \in \dfrac{1}{c} (\tau \mathbb{Z} + \mathbb{Z})$.	
	By (\ref{3.definition_B}), one has
		\begin{equation}\label{6.proposition1ss}
			\begin{split}
			\F^{(n)} (\vec{x};X;\tau) 
				& =
				\frac{(-1)^n n!}{(2\pi i)^n X^{n+1}} + 	
				\sum_{m=0}^{\infty}
				\frac{B_{m+n+1}(\vec{x};\tau)}{(m+n+1)m!} (2\pi i)^{m+1} X^m \\
				& =
				\frac{(-1)^n n!}{(2\pi i)^n X^{n+1}}  + O(1), 
			\end{split}
		\end{equation}
		thus we find that
		\begin{eqnarray*}
			& & \widehat{S}_{1,l}(V r, V M;  V \bigl( \begin{smallmatrix} X  - (d/c) Y + z\ \\ Y \end{smallmatrix} \bigr) ; \tau)\\
			&\stackrel{\text{(\ref{2.key_formula2})}}{\stackrel{\text{(\ref{6.proposition1cc})}}{=}} &
			\frac{(-1)^{l-1}}{d(V r)} \sum_{j', j (  d(Vr))}
					\F \Bigl(- \frac{\vec{j} + \vec{y_0}}{d(Vr)}; d(r) (X + z) - \frac{d(Vr) Y}{c} ;  \tau \Bigr) \\
			& & \qquad \qquad \qquad \qquad  \times
					\F^{(l-1)} \Bigl(- n(V r) \frac{\vec{j} + \vec{y_0}}{d(V r)} + \vec{x_0}; cX + \tau m_0' + m_0 ; \tau \Bigr)	\\
			&\stackrel{\text{(\ref{2.key_formula1})}}{=} &
			\frac{(-1)^{l-1}}{d(V r)} \sum_{j', j (  d(Vr))}
					\e \bigl(-   \vec{m_0} \cdot \{ n(V r) \frac{\vec{j} + \vec{y_0}}{d(V r)} - \vec{x_0}  \} \bigr)
			\\ 
			& &    \times
					\F \Bigl(- \frac{\vec{j} + \vec{y_0}}{d(Vr)}; d(r) (X + z) - \frac{d(Vr) Y}{c} ;  \tau \Bigr)  
					\F^{(l-1)} \Bigl(- n(V r) \frac{\vec{j} + \vec{y_0}}{d(V r)} + \vec{x_0}; cX; \tau \Bigr)	\\	
			&\stackrel{\text{(\ref{6.proposition1ss})}}{=} &
			\frac{(l-1)!}{(2 \pi i)^{l-1}c^l}
				\e( \vec{m_0} \cdot \vec{x_0}) 
				\sum_{k=0}^{l-1} \frac{( 2 \pi i d(r))^k}{k ! X^{l-k}} \frac{1}{d(V r) } 
			\\
			& &    \times	
					 \sum_{j', j (  d(Vr))} 
						\e (-  n(V r)  \vec{m_0} \cdot \frac{\vec{j} + \vec{y_0}}{d(V r)} ) 
						\F^{(k)} \Bigl(- \frac{\vec{j} + \vec{y_0}}{d(Vr)};  d(r) z - \frac{d(V r) Y}{c};  \tau \Bigr) + O(1)\\
			&\stackrel{\text{(\ref{6.lemma2a})}}{=}&
			\frac{(l-1)!}{(2 \pi i)^{l-1}c^l}
				\e( \vec{m_0} \cdot \vec{x_0}) 
			\\
			& &  \times
				\sum_{k=0}^{l-1} \frac{( 2 \pi i d(r))^k}{k ! X^{l-k} d(V r)^{k}} 
						\F^{(k)} \Bigl(- \vec{y_0};  - \frac{Y}{c} +   \frac{d(r) + c n(Vr)}{d(Vr)}  z ;  \tau \Bigr) + O(1).		
		\end{eqnarray*}
	Because of $d(r)^k/d(Vr)^k = 1/j(V;r)^k$ and $(d(r) + c n(Vr))/d(Vr) = a$,  we conclude that
		\begin{multline}\label{6.proposition1ee}
			L( X  - \frac{d}{c} Y + z)\\
			=
			- \frac{(l-1)!}{(2 \pi i)^{l-1}c^l}
				\e( \vec{m_0} \cdot \vec{x_0}) 
				\sum_{k=0}^{l-1} \frac{( 2 \pi i)^k j(V; r)^{l-1-k}}{k ! X^{l-k}} 
						\F^{(k)} \Bigl(- \vec{y_0};  - \frac{Y}{c} + a z ;  \tau \Bigr) + O(1).			
		\end{multline}
	On the other hand, 
			since 
			$\bigl( \begin{smallmatrix} \vec{x} \\ \vec{y}\end{smallmatrix} \bigr)
				=
				V^{-1} \bigl( \begin{smallmatrix} \vec{x_0} \\ \vec{y_0}\end{smallmatrix} \bigr)$,
			it follows that $\vec{y} = -c\vec{x_0} + a \vec{y_0}$ or
		\begin{equation}\label{6.proposition1fff}
			\vec{m_0} \cdot \vec{y} = a \vec{m_0} \cdot \vec{y_0} -c \vec{m_0}  \cdot \vec{x_0}.
		\end{equation}
	Therefore we can find similarly above way that 
		\begin{eqnarray*}
			& &
			R( X  - \frac{d}{c} Y + z)
			\\
			& = &
			\widehat{R}_V (l, r, M; \begin{smallmatrix} X  - (d/c) Y + z  \\ Y \end{smallmatrix} ; \tau)
			\\
			& \stackrel{\text{(\ref{6.definition_R})}}{=} &			
			(-1)^l \sum_{k=0}^{l-1}\binom{l-1}{k} 
				\bigl( - j (V, r)\bigr)^k
					\frac{1}{c} 
			\\
			& & \qquad \qquad \times
			\sum_{j', j (  \abs{c})} 
						\F^{(k)} (\frac{\vec{j} +\vec{y}}{c};	 -c(X+z) ;\tau)
						\F^{(l-1-k)} (d\frac{\vec{j} +\vec{y}}{c} + \vec{x}; Y ;\tau) 
			\\ 
			& \stackrel{\text{(\ref{8.def_S})}}{\stackrel{\text{(\ref{6.proposition1ss})}}{=}}  &
			- \sum_{k=0}^{l-1}
				\binom{l-1}{k} j (V, r)^k  \frac{k!}{(2 \pi i)^k (cX)^{k+1}}
				\frac{1}{c}  
			\\
			& & \qquad \qquad \times
			\sum_{j', j (  \abs{c})}
					\e(- \vec{m_0} \cdot \frac{\vec{j} +\vec{y}}{c})
						\F^{(l-1-k)} (\frac{- d \vec{j} - \vec{y_0} }{c} ; -Y ;\tau) + O(1)
			\\
			& \stackrel{\text{(\ref{6.proposition1fff})}}{=} &
			- \sum_{k=0}^{l-1}
				\binom{l-1}{k} j (V, r)^k  \frac{k! 
					\e (\vec{m_0}  \cdot \vec{x_0})}{(2 \pi i)^k (cX)^{k+1}}
				\frac{1}{c}  
			\\
			& & \qquad \qquad \times
			\sum_{j', j (  \abs{c})}
					\e( - a \vec{m_0} \cdot \{  \frac{ d \vec{j} +  \vec{y_0} }{c} \} )
						\F^{(l-1-k)} (\frac{- d \vec{j} - \vec{y_0} }{c} ; -Y ;\tau) + O(1)\\			
			& \stackrel{\text{(\ref{6.lemma2a})}}{=} &
			- \frac{\e (\vec{m_0}  \cdot \vec{x_0})}{c^{l}}\sum_{k=0}^{l-1}
				\binom{l-1}{k} j (V, r)^k  \frac{k! }{(2 \pi i)^k X^{k+1}}
			\\
			& & \qquad \qquad \qquad \qquad \times
						\F^{(l-1-k)} (-\vec{y_0}; -\frac{Y}{c} +  a z;\tau) + O(1).							
		\end{eqnarray*}
	In the last equation, we use the fact that $ad \equiv 1 \  (\mathrm{mod\ } \abs{c})$
		which follows from $\det V =1$.
		Replacing $k$ by $l-1-k$, the last formula equals the right hand side of (\ref{6.proposition1ee}),
			which proves the claim $\mathrm{(ii)}$.
\end{proof}

The remain of the tasks in this section is to derive THEOREM \ref{6.Theorem1} from PROPOSITION \ref{6.Proposition1}.
In order to do it, we need the following lemma:
\begin{lemma}\label{6.Lemma3}
	Let $M(X)$ means the set of meromorphic functions with the variable $X$,
		and $MF(Y, Z)$ that with the variables $X, Y$.
	For $f(X) \in MF(X)$ and $g(X,Y) \in MF(X,Y)$, 
		let $C_{X^m} (f(X))$, $C_{X^m} (g(X,Y))$ and $C_{Y^m} (g(X,Y))$ mean
		the coefficients of $X^m$ in $f(X)$ and $g(X,Y)$, and that of $Y^m$ in $g(X,Y)$  respectively.
	We note that 
		$C_{X^m} (f(X)) \in \mathbb{C}$, 
		$C_{X^m} (g(X,Y)) \in MF(Y)$ 
		and 
		$C_{Y^m} (g(X,Y)) \in MF(X)$.
	
	Let $l, r, V, M$ be as in PROPOSITION \ref{6.Proposition1} with extra condition $c \neq 0$.
	Then we have the three equations
		\begin{equation}\label{6.lemma3a}
			C_{X^0} \circ C_{Y^0} (\widehat{S}_{1,l}(r, M;  \begin{smallmatrix} X \\ Y \end{smallmatrix} ; \tau))
			=
			(2 \pi i)^2
			\Bigl[
				\frac{1}{l} S_{1,l}(r, M ; \tau) - \frac{r^l}{l(l+1)} B_{l+1} ( \vec{y};  \tau ) 
			\Bigr],
		\end{equation}
		\begin{multline}\label{6.lemma3b}
			C_{X^0} \circ C_{Y^0} (\widehat{S}_{1,l}(V r, V M;  V \bigl( \begin{smallmatrix} X \\ Y \end{smallmatrix} \bigr) ; \tau))
			\\
			=
			(2 \pi i)^2
			\Bigl[
				\frac{1}{l} S_{1,l}(V r, V M; \tau)
				+
				\frac{(-1)^{l-1}}{l (l+1) c^l j(V;r)^l} B_{l+1} (c\vec{x}+d\vec{y}; \tau)
				\\
				+
				\frac{c}{(l+1) d(r) d(Vr)^l} B_{l+1}(n(r) \vec{y} - d(r) \vec{x};\tau)		
			\Bigr],
		\end{multline}
		\begin{multline}\label{6.lemma3c}
			C_{X^0} \circ C_{Y^0} (\widehat{R}_V (l, r, M; \begin{smallmatrix} X  \\ Y \end{smallmatrix} ; \tau))
			\\
			=
			\frac{(2 \pi i)^2}{l(l+1)} 
			\Bigl[
				\frac{(-1)^l (l+1)!}{(2\pi i)^{l+1}}
				R_V (l, r, M; \tau)
				+
				\frac{(-1)^l}{c^l j(V;r)} B_{l+1} (c\vec{x}+d\vec{y};\tau)
				-
				r^lB_{l+1} (\vec{y}; \tau) \
			\Bigr].
		\end{multline}
\end{lemma}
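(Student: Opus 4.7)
The strategy is uniform for all three identities of Lemma~\ref{6.Lemma3}: substitute the Laurent expansion of $\F^{(n)}(\vec u;X;\tau)$ at $X=0$ given in \eqref{6.proposition1ss} into each defining product, identify which pairs of powers of $X$ and $Y$ contribute to the double residue $C_{X^0}\circ C_{Y^0}$, and invoke the distribution relation \eqref{6.lemma2b} to collapse sums of $B_{l+1}$ over $(\mathbb{Z}/d\mathbb{Z})^2$ into single evaluations.

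For \eqref{6.lemma3a}, the first factor $\F((\vec j+\vec y)/d(r);n(r)Y-d(r)X;\tau)$ is holomorphic in $Y$ at $Y=0$ for generic $X$, while the second factor $\F^{(l-1)}(\vec x_j;-Y;\tau)$ has only one negative power $Y^{-l}$, so only the pairings $(Y^0,Y^0)$ and $(Y^l,Y^{-l})$ survive in $[Y^0]$. The subsequent $[X^0]$ extraction picks out $B_1$ and $B_{l+1}$ of $(\vec j+\vec y)/d(r)$, and \eqref{6.lemma2b} converts $\sum_{j',j}B_{l+1}((\vec j+\vec y)/d(r);\tau)$ into $d(r)^{1-l}B_{l+1}(\vec y;\tau)$, giving the two terms on the right-hand side. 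For \eqref{6.lemma3b} the key algebraic input is the identity $n(Vr)(cX+dY)-d(Vr)(aX+bY)=n(r)Y-d(r)X$, a direct consequence of $\det V=1$ via \eqref{6.proof_Lemma4.2aa}, which converts the transformed first factor back into $\F(\vec u_j;n(r)Y-d(r)X;\tau)$. Both factors are then holomorphic at $Y=0$, so $[Y^0]$ simply evaluates at $Y=0$, and the subsequent Laurent expansion in $X$ admits three contributing pairings: $(X^0,X^0)$ giving $\tfrac{1}{l}S_{1,l}(Vr,VM;\tau)$, $(X^{-1},X^1)$ giving the $B_{l+1}(n(r)\vec y-d(r)\vec x;\tau)$ correction via \eqref{6.proof_Lemma4.2bb}, and $(X^l,X^{-l})$ giving the $B_{l+1}(c\vec x+d\vec y;\tau)$ correction via $d(Vr)=d(r)j(V;r)$.

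For \eqref{6.lemma3c}, I apply the same machinery to each summand $\widehat S_{k+1,l-k}(d/c,\cdot\,;X,Y;\tau)$ in the definition \eqref{6.definition_R}; each produces a Dedekind--Rademacher piece $S_{k+1,l-k}(d/c,\cdot\,;\tau)$ and a $B_{l+1}(\vec y;\tau)$ correction. The combinatorial identity $\binom{l-1}{k}/[(l-k)(k+1)]=\binom{l+1}{k+1}/[l(l+1)]$ recognizes the sum of Dedekind--Rademacher pieces as $R_V(l,r,M;\tau)$ up to the boundary contributions at $k=-1$ and $k=l$, which I compute explicitly via $B_0=1$ as $S_{0,l+1}(d/c,\cdot\,;\tau)=c^{-l}B_{l+1}(c\vec x+d\vec y;\tau)$ and $S_{l+1,0}(d/c,\cdot\,;\tau)=c^{-l}B_{l+1}(\vec y;\tau)$. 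The main obstacle is then to collapse the several sources of $B_{l+1}(\vec y;\tau)$ corrections onto the single $-r^l B_{l+1}(\vec y;\tau)$ term on the right-hand side; this is achieved by the binomial identity $\sum_{k=0}^{l-1}\binom{l}{k}d^{l-k}(-j(V;r))^k=(-cr)^l-(-j(V;r))^l$, whose telescoping of $(-j(V;r))^l$ against the boundary term at $k=l$ produces the clean coefficient $-r^l$. Throughout the three expansions, careful attention must be paid to signs and to the distinction between the powers of $c$, $d$, $j(V;r)$, and $d(Vr)=d(r)j(V;r)$.
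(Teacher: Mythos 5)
Your proposal is correct and follows essentially the same route as the paper: expand each factor via the Laurent series \eqref{6.proposition1ss}, isolate the contributing coefficient pairings in $Y$ and then in $X$, collapse the resulting sums over $(\mathbb{Z}/d\mathbb{Z})^2$ with the distribution relations of Lemma \ref{6.Lemma2}, and for \eqref{6.lemma3c} handle the boundary terms $k=-1,l$ of $R_V$ together with the binomial identity $\sum_{k=0}^{l-1}\binom{l}{k}d^{l-k}(-j(V;r))^k=(-cr)^l-(-j(V;r))^l$. The only cosmetic difference is that you apply \eqref{6.lemma2b} to the Bernoulli functions after extracting $C_{X^0}$, whereas the paper applies \eqref{6.lemma2a} to $\F^{(l)}$ beforehand; these are equivalent.
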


\begin{proof}
	We find from (\ref{8.def_S}), (\ref{6.lemma2a}) and (\ref{6.proposition1ss}) that
		\begin{eqnarray*}
			& & C_{Y^0}(\widehat{S}_{1,l}(r; M,  \begin{smallmatrix} X \\ Y \end{smallmatrix} ; \tau) ) 
			\\
			&\stackrel{\text{(\ref{8.def_S})}}{\stackrel{\text{(\ref{6.proposition1ss})}}{=}} &
			\frac{2 \pi i}{l d(r)}
				\sum_{j', j (  d(r))} 
					\Bigl{\{}
						\F \Bigl(\frac{\vec{j} + \vec{y}}{d(r)} ;  -d(r) X;  \tau \Bigr) B_l \Bigl(n(r) \frac{\vec{j} + \vec{y}}{d(r)} -\vec{x}; \tau \Bigr)
			\\
			& & \qquad \qquad \qquad \qquad
						-
						(n(r))^l \F^{(l)} \Bigl(\frac{\vec{j} + \vec{y}}{d(r)} ;  -d(r) X;  \tau \Bigr) 
					\Bigr{\}} 
			\\
			& \stackrel{\text{(\ref{6.lemma2a})}}{=} &
			\frac{2 \pi i}{l}
				\biggl[ \frac{1}{d(r)}
					\sum_{j', j (  d(r))} \F \Bigl(\frac{\vec{j} + \vec{y}}{d(r)} ;  -d(r) X;  \tau \Bigr) 
						B_l \Bigl(n(r) \frac{\vec{j} + \vec{y}}{d(r)} -\vec{x}; \tau \Bigr)
			\\
			& & \qquad \qquad \qquad \qquad
						- r^l \F^{(l)} ( \vec{y} ;  - X;  \tau )
				\biggr].
		\end{eqnarray*}
	By using (\ref{6.proposition1ss}) again, we get (\ref{6.lemma3a}).
	
	We prove (\ref{6.lemma3b}) next.
	By (\ref{6.proposition1cc}), we have
		\begin{multline*}
			C_{Y^0} (\widehat{S}_{1,l}(V r, V M;  V \bigl( \begin{smallmatrix} X \\ Y \end{smallmatrix} \bigr) ; \tau))
			\\
			=
			\frac{1}{d(V r)} \sum_{j', j (  d(Vr))}
				\F \Bigl(\frac{\vec{j} + \vec{y_0}}{d(Vr)}; -d(r) X;  \tau \Bigr)
				\F^{(l-1)} \Bigl(n(V r) \frac{\vec{j} + \vec{y_0}}{d(V r)} -\vec{x_0};-cX; \tau \Bigr).
		\end{multline*}
	By (\ref{6.proposition1ss}), we get (\ref{6.lemma3b}).
	
	Finally we verify (\ref{6.lemma3c}).
	By (\ref{6.definition_R}), (\ref{6.lemma2a}) and (\ref{6.proposition1ss}) we have
		\begin{eqnarray*}
			& & 
			C_{Y^0}(\widehat{R}_V (l, r, M; \begin{smallmatrix} X  \\ Y \end{smallmatrix} ; \tau)) 
			\\
			&\stackrel{\text{(\ref{6.definition_R})}}{\stackrel{\text{(\ref{6.proposition1ss})}}{=}} &
			2 \pi i (-1)^l \sum_{k=0}^{l-1}
				\binom{l-1}{k} \bigl(- j (V, r)\bigr)^k  \frac{1}{l-k} \\
			& & \qquad \qquad \times
				 \frac{1}{c} \sum_{j', j (  \abs{c})} 
					\Bigl{\{}
						\F^{(k)} \Bigl(  \frac{\vec{j} + \vec{y}}{c} ;  -c X;  \tau \Bigr)
							B_{l-k} \Bigl( d \frac{\vec{j} + \vec{y}}{c} + \vec{x}; \tau \Bigr)
			\\
			& & \qquad \qquad \qquad \qquad
						-
						d^{l-k}
							\F^{(l)} \Bigl(  \frac{\vec{j} + \vec{y}}{c} ;  -c X;  \tau \Bigr)	
					\Bigr{\}} 
			\\
			& \stackrel{\text{(\ref{6.lemma2a})}}{=} &
			2 \pi i (-1)^l \sum_{k=0}^{l-1}
				\binom{l-1}{k} \bigl(- j (V, r)\bigr)^k \frac{1}{l-k}\\
			& & \qquad \qquad \times
				\Bigl{\{}
					 \frac{1}{c} \sum_{j', j (  \abs{c})} 
						\F^{(k)} \Bigl(  \frac{\vec{j} + \vec{y}}{c} ;  -c X;  \tau \Bigr)
							B_{l-k} \Bigl( d \frac{\vec{j} + \vec{y}}{c} + \vec{x}; \tau \Bigr) 
			\\
			& & \qquad \qquad \qquad \qquad
						-
						\frac{d^{l-k}}{c^l}
							\F^{(l)} \Bigl( \vec{y} ;  - X;  \tau \Bigr)	
				\Bigr{\}} 
			\\
			& =&
			2 \pi i (-1)^l 
				\Bigl[
					\sum_{k=0}^{l-1} \binom{l-1}{k} \frac{\bigl(- j (V, r)\bigr)^k}{l-k}
						 \frac{1}{c} \sum_{j', j (  \abs{c})} 
							\F^{(k)} \Bigl(  \frac{\vec{j} + \vec{y}}{c} ;  -c X;  \tau \Bigr)
								B_{l-k} \Bigl( d \frac{\vec{j} + \vec{y}}{c} + \vec{x}; \tau \Bigr) \\
			& & \qquad \qquad \qquad \qquad 
					-
					\frac{1}{c^l} \F^{(l)} \Bigl( \vec{y} ;  - X;  \tau \Bigr)
						\sum_{k=0}^{l-1} \binom{l-1}{k} \frac{\bigl(- j (V, r)\bigr)^k d^{l-k}}{l-k}	
				\Bigr]	.
		\end{eqnarray*}
	Since 
		\begin{eqnarray*}
			\sum_{k=0}^{l-1} \binom{l-1}{k} \frac{\bigl(- j (V, r)\bigr)^k d^{l-k}}{l-k}	
			& = &
			\frac{1}{l} \sum_{k=0}^{l-1} \binom{l}{k} \bigl(- j (V, r)\bigr)^k d^{l-k}
			\\
			& = &
			\frac{1}{l} \Bigl(
						(d - j(V, r))^{l}
						-
						(- j(V, r))^{l}
					\Bigr)		
			\\
			& = &
			\frac{(-1)^l}{l} 
				\Bigl( (cr)^l - j(V;r)^l \Bigr),
		\end{eqnarray*}
		we get
		\begin{eqnarray*}
			& &
			C_{Y^0}(\widehat{R}_V (l, r, M; \begin{smallmatrix} X  \\ Y \end{smallmatrix} ; \tau))
			\\
			& = &
			\frac{2 \pi i}{l}
				\Bigl[
					\frac{j(V;r)^l - (cr)^l}{c^l} \F^{(l)} \Bigl( \vec{y} ;  - X;  \tau \Bigr)
			\\
			& & \quad
					+
					(-1)^l \sum_{k=0}^{l-1} \binom{l}{k} \bigl(- j (V, r)\bigr)^k
						\frac{1}{c} \sum_{j', j (  \abs{c})} 
							\F^{(k)} \Bigl(  \frac{\vec{j} + \vec{y}}{c} ;  -c X;  \tau \Bigr)
								B_{l-k} \Bigl( d \frac{\vec{j} + \vec{y}}{c} + \vec{x}; \tau \Bigr) 
				\Bigl].
		\end{eqnarray*}
	Therefor it follows from (\ref{6.proposition1ss}) that
		\begin{eqnarray*}
			& &
			C_{X^0} \circ C_{Y^0} (\widehat{R}_V (l, r, M; \begin{smallmatrix} X  \\ Y \end{smallmatrix} ; \tau))
			\\
			& =&
			\frac{(2 \pi i)^2}{l (l+1)}
					\Bigl[
						\frac{j(V;r)^l - (cr)^l}{c^l} B_{l+1} ( \vec{y} ;  \tau )
				\\
			& & \quad
						+
						(-1)^l \sum_{k=0}^{l-1} \binom{l+1}{k+1} \bigl(- j (V, r)\bigr)^k
							\frac{1}{c} \sum_{j', j (  \abs{c})} 
								B_{k+1} \Bigl(  \frac{\vec{j} + \vec{y}}{c} ;  \tau \Bigr)
									B_{l-k} \Bigl( d \frac{\vec{j} + \vec{y}}{c} + \vec{x}; \tau \Bigr) 
					\Bigl]
			\\
			& =&
			\frac{(2 \pi i)^2}{l (l+1)}
					\Bigl[
						\frac{j(V;r)^l - (cr)^l}{c^l} B_{l+1} ( \vec{y} ;  \tau )
						+
						\frac{(-1)^l (l+1)!}{(2\pi i)^{l+1}}R_V (l, r, M; \tau)
				\\
			& & \quad
						+
						\frac{(-1)^l}{j(V;r) c} \sum_{j', j (  \abs{c})} B_{l+1} \Bigl( d \frac{\vec{j} + \vec{y}}{c} + \vec{x}; \tau \Bigr)
						-
						\frac{j(V;r)^l}{c}  \sum_{j', j (  \abs{c})} B_{l+1} \Bigl(  \frac{\vec{j} + \vec{y}}{c} ;  \tau \Bigr)
					\Bigl]
		\\
		& =&
			\frac{(2 \pi i)^2}{l (l+1)}
			\Bigl[
				\frac{j(V;r)^l - (cr)^l}{c^l} B_{l+1} ( \vec{y} ;  \tau )
				+
				\frac{(-1)^l (l+1)!}{(2\pi i)^{l+1}}R_V (l, r, M; \tau)
				\\
				& & \quad
				+
				\frac{(-1)^l}{j(V;r) c^l} B_{l+1} (c\vec{x}+d\vec{y}; \tau )
				-
				\frac{j(V;r)^l}{c^l} B_{l+1} ( \vec{y};  \tau )
			\Bigl],
		\end{eqnarray*}
	which with (\ref{6.lemma2b}) gives (\ref{6.lemma3c}).
	
\end{proof}
We prove Theorem \ref{6.Theorem1} as the end of the section.
\begin{proof}[Proof of Theorem \ref{6.Theorem1}]
	The case that $c =0$ is trivial, so we suppose that $c \neq 0$.
	If $j(V;r) > 0$, (\ref{6.theorem1a}) follows from Proposition \ref{6.Proposition1} and Lemma \ref{6.Lemma3}.
	Since $j(-V; r) = -j(V; r)$, $S_{1,l}((-V) r, (-V) M; \tau) = (-1)^{l-1} S_{1,l}(V r, V M; \tau)$
		and $R_{-V} (l, r, M; \tau)= R_V (l, r, M; \tau)$ by definition, 
		the case that $j(V;r) <0$ is reduced to the case that $j(V;r) >0$.
\end{proof}



\begin{flushleft}
\mbox{}\\ \qquad
MACHIDE, Tomoya
\mbox{}\\ \qquad
Research Center for Quantum Computing
\mbox{}\\ \qquad
Interdisciplinary Graduate School of Science and Engineering
\mbox{}\\ \qquad
Kinki University
\mbox{}\\ \qquad
3-4-1 Kowakae, Higashi-Osaka, Osaka 577-8502, Japan
\mbox{}\\ \qquad
E-mail: machide.t@gmail.com
\end{flushleft}

\end{document}